\newtheorem{theorem}{Theorem}
\newtheorem{lemma}[theorem]{Lemma}
\newtheorem{definition}{Definition}[section]
\newtheorem{corollary}[theorem]{Corollary}
\newtheorem{remark}{Remark}
\newtheorem{claim}[theorem]{Claim}
\newcommand{\e}{\eps}
\newcommand*{\myov}[1]{\overbracket[0.65pt][-1pt]{#1}}
\newcommand{\pred}[1]{\chr[#1]}
\newcommand{\nrm}[1]{\left\Vert #1 \right\Vert}
\newcommand{\Norm}[1]{\left\Vert #1 \right\Vert}
\newcommand{\norm}[1]{\Vert #1 \Vert}
\newcommand{\E}{\mathop{\mathbb{E}}}
\newcommand{\W}{\mathop{\mathbb{W}}}
\renewcommand{\P}{\mathop{\mathbb{P}}}
\newcommand{\R}{\mathbb{R}}
\newcommand{\F}{\mathscr{F}}
\newcommand{\G}{\mathcal{G}}
\renewcommand{\H}{\mathcal{H}}
\newcommand{\paren}[1]{\left( #1 \right)}
\newcommand{\sqprn}[1]{\left[ #1 \right]}
\newcommand{\set}[1]{\left\{ #1 \right\}}
\newcommand{\abs}[1]{\left| #1 \right|}
\newcommand{\del}{\partial}
\newcommand{\ddel}[2]{\frac{\del#1}{\del#2}}
\newcommand{\beq}{\begin{eqnarray*}}
\newcommand{\eeq}{\end{eqnarray*}}
\newcommand{\beqn}{\begin{eqnarray}}
\newcommand{\eeqn}{\end{eqnarray}}
\newcommand{\ben}{\begin{enumerate}}
\newcommand{\een}{\end{enumerate}}
\newcommand{\bit}{\begin{itemize}}
\newcommand{\eit}{\end{itemize}}
\providecommand{\hide}[1]{}
\newcommand{\eps}{\varepsilon}
\newcommand{\evalat}[2]{\left.#1\right|_{#2}}
\newcommand{\ceil}[1]{\ensuremath{\left\lceil#1\right\rceil}}
\newcommand{\floor}[1]{\ensuremath{\left\lfloor#1\right\rfloor}}
\newcommand{\argmin}{\mathop{\mathrm{argmin}}}
\newcommand{\inv}{^{-1}} %
\newcommand{\chr}{\boldsymbol{{1}}} %
\newcommand{\X}{\Omega}
\newcommand{\Xn}{X_{[n]}}
\newcommand{\polylog}{\operatorname{polylog}}
\newcommand{\ddim}{\operatorname{ddim}}
\newcommand{\diam}{\operatorname{diam}}
\newcommand{\Lip}{\operatorname{Lip}}
\newcommand{\lip}[1]{\nrm{#1}_{\Lip}}
\newcommand{\covn}{\mathscr{N}}
\renewcommand{\d}{\mathrm{d}}
\newcommand{\barLs}{{\myov{\Lip}}}
\newcommand{\barLw}{{\widetilde{\Lip}}}
\newcommand{\supeta}{^\eta}
\newcommand{\Lp}[1]{{\sf{L}}_{#1}}
\newcommand{\La}{\Lambda}
\newcommand{\bLa}{{\myov{\Lambda}}}
\newcommand{\bLas}{{\myov{\Lambda}}}
\newcommand{\bLaw}{{\widetilde{\Lambda}}}
\newcommand{\bLalt}{{\myov{\Lambda}}^{\sf{alt}}}
\newcommand{\joint}{\nu}
\newcommand{\serr}{\widehat{\gerr}}
\newcommand{\gerr}{\operatorname{err}}
\newcommand{\Int}[1]{{\pred{{#1}>1/2}}}
\newcommand{\supp}{\operatorname{supp}}
\newcommand{\Nmsc}[3]{\Norm{#1}_{\Lp{#2}(#3)}} %
\newcommand{\nmsc}[3]{\norm{#1}_{\Lp{#2}(#3)}} %
\newcommand{\namb}[1]{\nmsc{#1}{2}{\mu}} %
\newcommand{\ndsc}[1]{\nmsc{#1}{2}{\pi}} %
\newcommand{\nemp}[1]{\nmsc{#1}{2}{\mu_n}} %
\newcommand{\ninf}[1]{\norm{#1}_{\infty}} %
\newcommand{\extname}{\textup{\textsf{PMSE}}}
\newcommand{\ci}{c}
\newcommand{\ifrac}[2]{{#1}/{#2}}
\newcommand{\vmax}[2]{{#1}\vee{#2}}
\newcommand{\Del}[2]{\abs{f(#1)-f(#2)}}
\newcommand{\defrad}{{\eta/\ell}}
\newcommand{\radem}{\mathfrak{R}}
\def\longto{\mathop{\longrightarrow}\limits}
\newcommand{\ntoinf}{\longto_{n\to\infty}}
\renewcommand{\phi}{\varphi}
\title{Functions with average smoothness:\\ structure, algorithms, and learning}
\author{Yair Ashlagi, Lee-Ad Gottlieb, Aryeh Kontorovich \\\small{\texttt{ashlagi@post.bgu.ac.il,leead@ariel.ac.il,karyeh@cs.bgu.ac.il}}}
\begin{document}
\maketitle

\begin{abstract}%
  We initiate a program of average smoothness analysis for efficiently learning real-valued functions on metric spaces. Rather than using the 
  Lipschitz constant as the regularizer, we define a local slope 
  at each point and gauge the function complexity as the average of these values. Since the mean can be dramatically smaller than the maximum, this complexity measure can yield considerably sharper generalization bounds --- assuming 
  that these admit a refinement where the 
  Lipschitz
  constant is replaced by our average of local slopes.

Our first major contribution is to obtain just such distribution-sensitive bounds. This required overcoming a number of technical challenges,
  perhaps the most formidable of which was bounding the
  {\em empirical} covering numbers,
  which can
  be much worse-behaved than the ambient ones.
  Our combinatorial results are accompanied by efficient algorithms for
smoothing the labels of the random   
sample, as well as guarantees that the extension
from the sample to the whole space will
continue to be, with high probability, smooth on average.
  Along the way we discover a surprisingly rich combinatorial and
  analytic structure in the function class we define.
\end{abstract}

\section{Introduction}
{\em Smoothness} is a natural  measure of complexity
commonly used
in learning theory and statistics.
Perhaps the simplest method of quantifying the smoothness of a function is via the Lipschitz seminorm.
The latter has the advantage of being an analytically
and algorithmically
convenient,
broadly applicable
complexity measure,
requiring only a metric space (as opposed to additional differentiable structure).
In particular, the Lipschitz constant yields immediate bounds on the fat-shattering dimension
\citep{DBLP:journals/tit/GottliebKK14+colt},
covering numbers
\citep{MR0124720},
and sample compression
\citep{gkn-ieee18+nips}
of a function class, which in turn directly imply generalization bounds for classification and regression,
and also bounds the run-time of associated learning algorithms.

The simplicity of the Lipschitz seminorm, however,
has a downside:
it is a
worst-case measure,
insensitive to the underlying distribution.
As such, it can be overly pessimistic
in that
a single point pair can drive the Lipschitz constant of a function arbitrarily high, even if the function is nearly
constant everywhere else. Intuitively, we expect the complexity of learning a function that is highly smooth,
apart from 
low-density regions of high fluctuation,
to be determined by its average --- rather than worst-case --- behavior.
To this end, we seek a complexity measure that is resilient to
local fluctuations in low-density regions.
Formalizing this intuition
and exploring its analytic and algorithmic ramifications
is the main contribution of this paper.

Very roughly speaking, to learn
an $L$-Lipschitz (in the Euclidean metric)
function $f:[0,1]^d\to[0,1]$ at fixed precision and confidence
requires on the order of $L^d$ examples \citep{9781108498029}, and this continues to hold
in more general metric spaces \citep{DBLP:conf/nips/Kpotufe11,KpotufeDasgupta2012,DBLP:conf/nips/KpotufeG13,GottliebKK13-simbad+IEEE,DBLP:journals/tit/GottliebKK14+colt,DBLP:journals/corr/ChaudhuriD14}.
The goal of this paper is to replace the worst-case Lipschitz constant $L$ by an average one $\bar L$,
while still obtaining bounds of the genral form $\bar L^d$. Further,
we seek fully empirical generalization bounds, without making any a priori assumptions on
either the target function or the distribution.

\subsection{Our contributions}
A detailed roadmap of our results is given
in Section~\ref{sec:main-res}; here we only provide a brief overview.
We initiate a program of average-case smoothness analysis for efficiently learning
binary and
real-valued functions on metric spaces.
To any function $f:\X\to\R$ acting on a metric probability space $(\X,\rho,\mu)$,
we associate a complexity measure $\bLa_f=\bLa_f(\X,\rho,\mu)\in[0,\infty]$,
which corresponds to an average 
slope.
Our measure always satisfies
$\bLa_f\le\lip{f}$
and, as we illustrate below, the gap can be considerable.
Having defined our notion of average smoothness,
we show that the worst-case Lipschitz constant
$L$ can essentially be replaced
by its averaged variant 
$\bLa_f$ in the 
covering number
bounds.

Our results are fully empirical in that we
make no a priori assumptions on the target function or the sampling distribution,
and only require a finite diameter and doubling dimension of the metric space.
A curious and 
unique feature of our setting
--- which also presents
the bulk of the technical challenges --- is the fact that
although our hypothesis class is fixed before observing the data,
it is defined in terms of the unknown sampling distribution,
and hence not explicitly known to the learner.
This is in stark contrast with all previous 
supervised
learning settings,
where the function classes are fully known a priori.
Having observed a sufficiently large sample allows the learner
to construct an explicit hypothesis and conclude that, with high
probability, it belongs to the average smoothness class (to which our
generalization bounds then apply).

The statistical generalization bounds are accompanied by
efficient algorithms for performing sample
smoothing and a Lipschitz-type extension
for label prediction on test points.
The function classes we define turn out to exhibit
a surprisingly rich structure, making them
an object worthy of future study.
See Section~\ref{sec:main-res} for
a comprehensive overview of our techniques and central results,
along with comparisons to the current state-of-art bounds.

\subsection{Related work}
For the line segment metric $(\X,\rho)=([a,b],\abs{\cdot})$,
the {\em bounded variation} (BV)
of any $f:[a,b]\to\R$ with integrable derivative is given by
$V_a^b(f)=\int_a^b\abs{f'(x)}\d x$;
this is perhaps the most basic notion of average smoothness.
BV
does not require differentiability; see
\citet{MR3156940} for an encyclopedic reference.
Generalization bounds for BV functions may be obtained via covering numbers
\citep{DBLP:journals/tit/BartlettKP97,DBLP:journals/iandc/Long04}
(the latter also gave
an efficient algorithm for learning BV functions
via linear programming)
or the fat-shattering dimension
\citep[Theorem 11.12]{MR1741038}.
The aforementioned results correspond to the
case of
a uniformly distributed $\mu$ on $[a,b]$,
and thus are not
distribution-sensitive.
A natural extension of BV to
general measures would be to define $V_a^b(f)=\int_a^b\abs{f'(x)}\d\mu(x)$, but then the known fat-shattering
and covering number estimates break down --- especially if $\mu$ is not known to the learner.

Generalizing the notion of BV to higher dimensions is not nearly as straightforward. A common approach
is via the Hardy-Krause variation \citep{MR3156940,MR0419394,MR2208697}.
Even the two-dimensional case evades a simple characterization; counter-intuitively, Lipschitz functions $f:[0,1]^2\to\R$
may fail to have finite variation in the Hardy-Krause sense \citep[Lemma 1]{MR3514716}.
Some (rather loose) $\Lp1$ covering numbers for BV functions on $[0,1]^n$ were obtained by \citet[Theorem 3.1]{MR3852571};
these 
are not distribution-sensitive.
Generalizations of BV to metric
measure spaces beyond the Euclidean are known \citep{MR3642646}; we are not aware of any covering number
or combinatorial dimension
estimates for these.

If one considers {\em bracketing} (rather than covering) numbers, there are known results for controlling these in terms
of various measures of average smoothness. \citet{MR2324525} bound the bracketing numbers of Besov- and Sobolev-type classes.
\citet{MR2761605} also gave bracketing number bounds,
using a different notion of smoothness: the
{\em averaged modulus of continuity} developed by \citet{MR995672}.
We note that covering numbers asymptotically always give tighter
estimates than bracketing ones \citep{steve-cover-bracket}.
More significantly, to our knowledge, all of the previous
results bound the {\em ambient}
rather than the {\em empirical} covering numbers (see Section~\ref{sec:def} for definitions,
and Section~\ref{sec:cov-num} for bounds),
when it is precisely the latter that are needed for Uniform
Glivenko-Cantelli laws \citep{MR757767}.

A seminal work on recovering functions with
spacially inhomogeneous smoothness from noisy
samples is \citet{MR1635414}.
More in the spirit of our program is
the notion of {\em Probabilistic Lipschitzness}
\citep{urner2013probabilistic},
which
seeks to relax a hard Lipschitz condition
on the 
labeling 
function in binary classification. The authors position
it as a ``data niceness'' condition, analogous to that in \citet{mammen1999}.
These significantly differ from
our notion of average slope.
Most importantly, PL and the various Tsybakov-type noise conditions
are {\em assumptions} on the data-generating distribution
rather then {\em empirically computable} quantities on a given sample.
Our approach is fully empirical in the sense of not making
a priori assumptions on the distribution or the target function. Additionally,
PL is specifically designed for binary classification with
deterministic labels --- unlike our notion, which is applicable to
any real-valued function and any conditional
label distribution.

In this paper, we make systematic use of a Lipschitz-type extension (\extname, defined in Section~\ref{sec:PMSE})
explicitly tailored to our framework.
This extension is closely related to one introduced by
\citet{MR2431047} (also relevant is \citet{MR3649229},
p. 385 and p. 416, Remark 5.1).

\subsection{Definitions and preliminaries}
\label{sec:def}
\paragraph{Metric probability spaces.}
We assume a basic familiarity with metric measure spaces and refer the reader to a standard reference, such as \citet{MR1800917}.
Standard set-theoretic notation is used throughout; in particular, for $f:\X\to\R$ and $A\subset\X$, we denote
the restriction of $f$ to $A$ by $\evalat{f}{A}$.
The triple $(\X,\rho,\mu)$ is a {\em metric probability space} if $\mu$ is a 
probability measure supported on the Borel
$\sigma$-algebra
induced by the open sets of $\rho$.
For $\X$-valued random variables, the notation $X\sim\mu$
means that $\P(X\in A)=\mu(A)$ for all Borel sets $A$.

\paragraph{Covers, packings, nets, hierarchies, partitions.}
The {\em diameter} of $A\subseteq\X$ is the maximal interpoint distance: $\diam(A)=\sup_{x,x'\in A}\rho(x,x')$.
For $t>0$ and $A,B\subseteq\X$, we say that $A$ is a $t$-{\em cover} of $B$ if
\beq
\sup_{b\in B}\inf_{a\in A}\rho(a,b)\le t,
\eeq
and define the $t$-covering number of $B$
to be the minimum cardinality of any $t$-cover,
denoted by $\covn(t,B,\rho)$.
We say that $A\subseteq B\subseteq\X$ is a $t$-{\em packing} of $B$
if $\rho(a,a')>t$ for all distinct $a,a'\in A$.
Finally, $A$ is a $t$-{\em net} of $B$ if it is simultaneously a
$t$-cover and a $t$-packing. 
A 
family of sets
$H_{t^0} \subseteq H_{t^{-1}} \subseteq \ldots \subseteq H_{t^{-m}}$
is a {\em hierarchy} for the set 
$H_{t^{-m}}$ if each $H_{t^{-i}}$ ($i<m$)
is a $t^{-i}$-net of $H_{t^{-(i+1)}}$,
where we have assumed that $H_{t^{-m}}$ have diameter $1$ and so $H_1$ contains a single point.

We denote by $B(x,r)=\set{x'\in\X:\rho(x,x')\le r}$
the (closed) $r$-ball about $x$. If
there is a $D<\infty$ such that
every $r$-ball in $\X$
is contained in the union of some $D$ $r/2$-balls,
the metric space $(\X,\rho)$ is said to be {\em doubling}.
Its {\em doubling dimension} is defined as $\ddim(\X)=\ddim(\X,\rho)=:\log_2D^*$,
where $D^*$ is the smallest $D$ verifying the doubling property.
It is well-known \citep{KL04,GottliebKK13tcs+alt} that
\beqn
\label{eq:ddim-pack}
\covn(t,\X,\rho)\le\paren{\frac{2\diam(\X)}{t}}^{\ddim(\X)},
\qquad t>0
,
\eeqn
which will be referred to as the covering property of doubling spaces. 
The packing property of doubling spaces 
asserts an analogous packing number bound,
up to constants in the exponent. A hierarchy for any $n$-point $\X$ set can be constructed in time
$2^{O(\ddim(\X))} \min \{\log n, \log \Delta\}$, where
$\Delta$ is the {\em aspect ratio} (minimal
interpoint distance)
of $\X$ \citep{KL04, HM06, DBLP:conf/stoc/ColeG06}.

To any finite $V\subseteq\X$ we associate the map
$\phi_V:\X\to V$
taking each $x\in\X$ to its nearest neighbor in $V$, with ties broken
arbitrarily (say, via some fixed ordering on $\X$)\footnote{
A measurable total order always exists \citep{hksw20}.
}.
The collection of sets $\set{ \phi_V\inv(v) : v\in V}$ is said to
comprise the {\em Voronoi} partition of $\X$ induced by $V$.
If $V$ happens to be a $t$-net of $\X$, then
\beqn
\label{eq:part-diam}
\rho(x,\phi_V(x))\le t,
\qquad x\in\X
.
\eeqn

\paragraph{Indices, norms.}
We write $[n]:=\set{1,\ldots,n}$ and use the shorthand $z_{[n]}:=(z_1,\ldots,z_n)$
for sequences.
For any metric probability space $(\X,\rho,\mu)$,
$p\ge1$,
and any $f:\X\to\R$,
we define the norm
$\norm{f}_{
\Lp{p}(\X,\rho,\mu)
}^p=\E_{X\sim\mu}[|f(X)|^p]
=\int_\X |f(x)|^p\d\mu(x)
$.

This work assumes a single fixed
metric probability space
$(\X,\rho,\mu)$;
this will be termed the {\bf ambient space}.
Several derived
metric probability spaces
$(\X',\rho',\mu')$ will be considered,
which will all be {\bf induced subspaces}
of $(\X,\rho)$ in the sense that
$\X'\subseteq\X$ and 
$\rho'=\evalat{\rho}{\X'\times\X'}$.
To lighten the notation, we will
often suppress the common metric $\rho$ 
and
use the shorthand
$
\nmsc{\cdot}{p}{\mu'}
:=
\norm{\cdot}_{\Lp{p}(\X',\rho,\mu')}
$.
For any $f,g:\X\to\R$ and any induced subspace $(\X',\rho)$ of $\X$ with measure $\mu'$,
  we use the shorthand
  \beqn
  \label{eq:evalat}
  \nmsc{f-g}{p}{\mu'} := \Nmsc{
    \evalat{f}{\X'}
    -
    \evalat{g}{\X'}
  }{p}{\mu'}
=
\norm{
    \evalat{f}{\X'}
    -
    \evalat{g}{\X'}
}_{\Lp{p}(\X',\rho,\mu')}
  .
  \eeqn
In particular, sampling $\X_n:=\Xn=(X_1,\ldots,X_n)\sim\mu^n$
induces the {\bf empirical space}
$(\X_n,\rho,\mu_n)$
with the norm $\nmsc{\cdot}{p}{\mu_n}$,
where $\mu_n$
is the empirical measure on $\X_n$, formally given by
$\mu_n(x)=n\inv\sum_{i=1}^n\pred{X_i=x}$.
The $\ell_\infty$ norm $\norm{f}_\infty=\sup_{x\in\X}\abs{f(x)}$
is measure-independent
and dominates all of the 
measure-induced
norms:
\beq
\nmsc{f-g}{p}{\mu'}
\le
\ninf{f-g},\qquad p\ge 1.
\eeq
The Lipschitz seminorm $\lip{f}$
is the smallest $L\in[0,\infty]$
satisfying $\abs{f(x)-f(x')}\le L\rho(x,x')$ for all $x,x'\in\X$.

\paragraph{Strong and weak mean.}
We define the {\bf weak mean}
of a
non-negative random variable $Z$ by
\beqn
\label{eq:weakmean}
\W[Z]
=
\sup_{t>0}t\P(Z\ge t).
\eeqn
In contrast, the {\bf strong mean} is just the usual expectation $\E[Z]$.
By
Markov's inequality,
we always have
$\W[Z]\le\E[Z]$;
further, the latter might be infinite while the former is finite.
A partial reverse inequality for finite measure spaces is given in
Lemma~\ref{lem:strong-weak-log}.

\paragraph{Local and average %
slope.}
For $f:\X\to\R$,
we define the 
{\em
slope} of $f$ at $x\in\X$
with respect to an $A\subseteq\X$
as
\beqn
\label{eq:locdef}
\La_f(x,A)
:= \sup_{x'\in A\setminus\set{x}}\frac{|f(x)-f(x')|}{\rho(x,x')}
.
\eeqn
Thus, 
\beqn
\label{eq:lipLa}
\lip{f}=\sup_{x\in\X}\La_f(x,\X).
\eeqn
We will define two notions of {\em average} slope:
strong and weak, corresponding, respectively,
to the strong and weak $\Lp1$ norms
of the random variable $\La_f(X)$, where $X\sim\mu$.
The two averages are defined, respectively, as
\beqn
\label{eq:blas}
\bLas_f(\mu,\X)
&:=& \E_{X\sim\mu}
\sqprn{\La_f(X,\X)}
=\nmsc{\La_f(\cdot,\X)}{1}{\mu},\\
\label{eq:blaw}
\bLaw_f(\mu,\X)
&:=& \W_{X\sim\mu}
\sqprn{\La_f(X,\X)}
=
\sup_{t>0}t\mu(M_f(t))
,
\eeqn
where $M_f(t)$ is the $t$-{\em level set}, a central object in this paper:
\beqn
\label{eq:Mf}
M_f(t):=\set{x\in\X:\La_f(x,\X)\ge t}.
\eeqn
The strong-weak mean inequality above implies that
\beqn
\label{eq:blasw}
\bLaw_f(\mu,\X)
\le
\bLas_f(\mu,\X)
\le
\lip{f}
\eeqn
always holds (the second inequality is obvious); further, 
$\bLas_f(\mu,\X)$
might be infinite 
while 
$\bLaw_f(\mu,\X)$
is finite
(as demonstrated by the step function on $[0,1]$ with the uniform measure, see Section~\ref{sec:disc}).
Since the above definitions were stated for
{\em any} metric probability space,
$\La_f(x,\X_n)$,
$\bLas_f(\mu_n,\X_n)$,
and
$\bLaw_f(\mu_n,\X_n)$
are well-defined as well.
(To appreciate the subtle choice of our definitions,
note that some intuitively appealing variants
irreparably fail, as discussed in Section~\ref{sec:disc}.)

The collection of all
  $[0,1]$-valued
      $L$-Lipschitz functions on $\X$,
  as well as
  its strong and weak mean-slope counterparts
  are denoted, respectively, by
  \beqn
  \label{eq:lipdef}
  \Lip_L(\X,\rho) &=& \set{f\in[0,1]^\X;~ \lip{f}\le L}, \\
  \label{eq:slipdef}
  \barLs_L(\X,\rho,\mu) &=& \set{f\in[0,1]^\X;~ \bLas_f(\mu,\X)\le L},\\
\label{eq:wlipdef}
  \barLw_L(\X,\rho,\mu) &=& \set{f\in[0,1]^\X;~ \bLaw_f(\mu,\X)\le L}.
  \eeqn
It follows from (\ref{eq:blasw}) that
$
\Lip_L(\X,\rho,\mu)\subset
\barLs_L(\X,\rho,\mu)\subset
\barLw_L(\X,\rho,\mu)
$,
where all containments are, in general, strict,
and
\beqn
\label{eq:markov}
\mu(M_f(L/t))\le t,
\qquad t>0
\eeqn
holds for all
$f\in\barLw_L(\X,\rho,\mu)$.
For most of this paper, we shall be interested in the larger latter
class, but occasional results for $\barLs_L(\X,\rho,\mu)$ 
will be presented, when of
independent interest.

{\em Remark:} Observe that
the classes $\barLs$ and $\barLw$
are defined in terms of the unknown sampling
distribution $\mu$. Given full knowledge of a function
$f:\X\to\R$, a learner can verify that $f\in\Lip_L$
but, absent full knowledge of $\mu$, it is impossible to know
for certain whether $f\in\barLw_L$ (or $f\in\barLs_L$).
As increasingly larger samples are observed,
the learner will be able to assert the latter
inclusions with increasing confidence.

\paragraph{Empirical and true risk.}
For any probability measure $\joint$ on $\X\times[0,1]$, we associate to any measurable
$f:\X\to\R$ its
{\em risk} $R(f;\joint):=\E_{(X,Y)\sim\joint}|f(X)-Y|$.
In the special case of the empirical measure $\joint_n$ induced by
a sample $(X_i,Y_i)_{i\in[n]}\sim\joint^n$, $R(f;\joint_n)$
is the empirical risk. For regression with real-valued $f$, this is the $\Lp1$-risk;
for classification with $\set{0,1}$-valued $f$, this is the $0$-$1$ error. (See \citet{mohri-book2012} for a standard reference.)

\paragraph{Miscellanea.} Additional standard inequalities and notations are deferred to Section~\ref{sec:misc} in the Appendix.

\pagebreak

\section{Main results and roadmap}
\label{sec:main-res}

This section assumes a familiarity with the terminology
and notation defined in Section~\ref{sec:def}.

\paragraph{Combinatorial structure.}
Our point of departure is Theorem~\ref{thm:cov-num}, which bounds
the $\Lp2(\mu)$ 
({\em ambient})
covering numbers of the function class
$\barLw_L(\X,\rho,\mu)$ --- 
and, a fortiori, of 
$\barLs_L(\X,\rho,\mu)$ 
[defined in (\ref{eq:slipdef}, \ref{eq:wlipdef})]
---
in terms of the average slope $L$, $\diam(\X)$, and $\ddim(\X)$.
Crucially, there is no dependence on the 
Lipschitz
constant $\lip{\cdot}$. 
At scale $t$, Theorem~\ref{thm:cov-num} gives a bound of roughly
\beqn
\label{eq:rough}
\left( {L}/{t} \right)^{\tilde{O}(\ddim)}
\eeqn
instead of the previous state-of-the art bound of
$( {\lip{\cdot}}/{t} )^{\tilde{O}(\ddim)}$.
The improvement can be dramatic, as the worst-case may be significantly (even infinitely) larger
than the mean (\ref{eq:lipLa}).

This simple result appears to be novel and interesting
in its own right,
but is insufficient to guarantee generalization bounds
(via a Uniform Glivenko-Cantelli law), since the latter
require control over the {\em empirical} (i.e., $\Lp2(\mu_n)$)
covering numbers.
Bounding these 
proved to be a formidable challenge.
The calculation in Theorem~\ref{thm:distances}
reduces 
this problem
to the one of bounding
the empirical measure of the level set, $\mu_n(M_f(\ell))$,
uniformly over all the functions in our class.
We make the perhaps surprising
discovery that
(i) uniform control over the $\mu_n(M_f(\ell))$ is possible
for the sub-class of functions free of certain local defects
(Lemma~\ref{lem:M_f(ell)})
and (ii) any $f\in \barLw_L(\X,\rho,\mu)$ 
is approximable in $\ell_\infty$ by a defect-free function
(Lemma~\ref{lem:repair});
see the beginning of Section~\ref{sec:defects}
for some discussion and intuition.
Together, these enable us to overcome the central
challenge of controlling the empirical
covering numbers (Theorem~\ref{thm:emp-cov-num}), yielding a bound comparable to (\ref{eq:rough}). The implied generalization bounds (Section~\ref{sec:gen})
enjoy a dependence on $\sqrt{L}/n^{1/8d}$, while all previously
known generalization results for classification and regression
feature a dependence on $\lip{\cdot}$ \citep{MR2013911,9781108498029,GottliebKK13-simbad+IEEE}.

\paragraph{Optimization and learning.}
From the perspective of supervised learning theory,  
our statistical bounds imply a 
non-trivial
algorithmic problem:
Given a labeled sample,
produce a hypothesis
whose true risk does not significantly exceed its empirical risk
(with high probability).
These notions are briefly defined in Section~\ref{sec:def}
and discussed in more detail in Section~\ref{sec:risk-bounds}.
In light of the aforementioned generalization bounds,
the learning procedure may be recast as follows:
The learner is given a ``complexity budget'' $L>0$.
Given a labeled sample, 
$(X_i,Y_i)\in \X\times[0,1]$, $i\in[n]$,
the learner
seeks to fit to the data some function 
with average slope not exceeding $L$,
while minimizing the empirical risk.
The latter is induced by
either the  $0$-$1$ loss (classification)
or the $\Lp1$ loss (regression).
Approximation algorithms for this problem
are presented in Section~\ref{sec:learn}.
Briefly, we cast the regression problem as an
optimization problem amenable to the mixed 
packing-covering framework of \citet{KY-14},
and further improve the algorithmic run-time by 
reducing the number of constraints in the program
(Section~\ref{sec:learn-reg}).
Interestingly, the classification problem
admits an efficient bi-criteria approximation
when casting the ``smoothness budget'' in terms of
the weak mean, but we were unable to find
an efficient solution for the strong mean, and provide
some indication that this may in fact be a hard problem
(Section~\ref{sec:learn-class}).

\paragraph{Adversarial extension.}
Having solved the learning problem, we have obtained an
approximate minimizer of the empirical risk,
but this does not immediately imply a bound on the true risk.
To obtain such a bound,
we 
demonstrate that with high probability, average smoothness under
the empirical measure $\mu_n$
translates to average smoothness
under the true sampling measure $\mu$,
from which a bound on true risk follows.

To this end, we define the following 
{\em adversarial extension} problem:
An adversary draws $n$ points $\X_n\subset \X$ from $\mu$ and labels them with $y\in[0,1]$.
This induces an average slope
$\bLas_y$ (or $\bLaw_y$)
under the empirical measure $\mu_n$. The adversary's goal
is to force {\em any} extension of $y$ from $\X_n$ to all of $\X$ to have
a significantly larger average slope under the true measure $\mu$.
In the case of regression, we show that if the learner is willing to tolerate a small 
distortion of $y$ under $\Lp1(\mu_n)$, it is possible to guarantee an at-most constant 
factor
increase in
both
$\bLas$ and $\bLaw$
with high probability (Section~\ref{sec:adv-ext}).
In the case of classification, we show how to achieve an most $2^{O(\ddim)}\polylog(n)$ factor increase (with high probability),
without incurring any distortion (Section~\ref{sec:adv-ext-class}).


\section{Covering numbers}
\label{sec:cov-num}

Our covering-numbers results will be stated for 
$\barLw_L$.
It follows from
(\ref{eq:blasw})
that these results hold verbatim 
for $\barLs_L$ as well.
(These function classes are defined in
(\ref{eq:lipdef}, \ref{eq:slipdef}, \ref{eq:wlipdef})).

\subsection{Ambient covering numbers}

Our empirical covering-numbers results build upon the following simpler result for the ambient covering numbers:

\begin{theorem}[Ambient $\Lp2$ Covering Numbers]
  \label{thm:cov-num}
  For $L,t>0$,
\beq
\log
\covn(t,
\barLw_L(\X,\rho,\mu),
\Lp2(\mu)
) &\le&
\covn(t^{3}/(64L),\X,\rho)
\log(16/t)
.
\eeq
In particular, for doubling spaces with $\diam(\X)\le1$, we have
\beq
\log
\covn(t,
\barLw_L(\X,\rho,\mu),
\Lp2(\mu)
)
&\le&
\paren{\frac{128L}{t^3}}^{\ddim(\X)}\log\frac{16}t.
\eeq
\end{theorem}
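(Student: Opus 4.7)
The plan is to reduce the covering problem for $\barLw_L(\X,\rho,\mu)$ to covering $\X$ itself. The weak-mean hypothesis enters through the Markov-type consequence~(\ref{eq:markov}): the ``bad'' level set $M_f(\ell)$, on which the local slope exceeds $\ell$, satisfies $\mu(M_f(\ell)) \le L/\ell$, while on its complement the function is locally $\ell$-Lipschitz in precisely the one-sided sense needed to compare $f(x)$ to $f$ evaluated at a nearby anchor point.

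Concretely, fix parameters $\ell, \delta > 0$ (to be tuned below), take a $\delta$-cover $N$ of $\X$ of cardinality $\covn(\delta,\X,\rho)$, and associate to each $f \in \barLw_L(\X,\rho,\mu)$ the piecewise-constant surrogate $g(x) := f(\phi_N(x))$, where $\phi_N$ is the nearest-point map into $N$ (so $\rho(x,\phi_N(x)) \le \delta$ for every $x$). For $x \notin M_f(\ell)$, the definition~(\ref{eq:locdef}) of $\La_f(x,\X)$ applied with $x' = \phi_N(x) \in \X$ yields $|f(x) - g(x)| \le \ell\delta$; for $x \in M_f(\ell)$ the trivial bound $|f(x) - g(x)| \le 1$ together with $\mu(M_f(\ell)) \le L/\ell$ suffices. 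Combining,
\[
\namb{f-g}^2 \le (\ell\delta)^2 + L/\ell.
\]
Choosing $\ell := 16L/t^2$ and $\delta := t^3/(64L)$ makes each summand equal $t^2/16$, whence $\namb{f-g} \le t/2$.

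The final step is to discretize the range of $g$: since $g$ is determined by its at most $|N|$ values on $N$, rounding each to the nearest point on a $(t/2)$-grid in $[0,1]$ produces a family of at most $(16/t)^{|N|}$ surrogates (the $16$ absorbs endpoint effects and ensures a clean bound at all $t \in (0,1]$), and each surrogate lies within sup-norm---hence $\Lp{2}(\mu)$-norm---distance $t/2$ of its $g$. So every $f \in \barLw_L(\X,\rho,\mu)$ lies within $\namb{\cdot}$-distance $t$ of a member of this finite family, and taking logarithms gives the first inequality. The ``in particular'' clause follows immediately by instantiating the doubling covering bound~(\ref{eq:ddim-pack}) at scale $\delta = t^3/(64L)$ under $\diam(\X) \le 1$.

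The main conceptual subtlety---beyond arithmetic bookkeeping of constants---is recognizing that the \emph{left-endpoint-only} form of $\La_f(x,\X)$ is precisely what makes nearest-neighbor approximation usable here: one only needs $x$, not $\phi_N(x)$, to lie in the good region $\X \setminus M_f(\ell)$, and the definition~(\ref{eq:locdef}) cooperates because it fixes the left endpoint at $x$ while taking a supremum over the right. Symmetric notions of average slope (which would require both endpoints to have small local slope) would not close this argument.
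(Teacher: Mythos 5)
Your proof is correct and reaches the paper's exact constants, but it takes a more self-contained route than the paper does. The paper restricts $f$ to the complement of the level set $M_f(4L/t^2)$ --- where $f$ is genuinely $4L/t^2$-Lipschitz as a function on that subset --- extends it to all of $\X$ via McShane--Whitney, and then covers the resulting class $\Lip_{4L/t^2}(\X,\rho)$ under $\ell_\infty$ by invoking Lemma~\ref{lem:lipcov}; the $\Lp2(\mu)$ error committed on the discarded level set is absorbed using $\mu(M_f(4L/t^2))\le t^2/4$. You instead build the cover from scratch: piecewise-constant surrogates $f\circ\phi_N$ on the Voronoi cells of a $t^3/(64L)$-net, with values rounded to a grid of $[0,1]$. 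The two constructions are quantitatively equivalent (Lemma~\ref{lem:lipcov} is itself proved by exactly such a net-plus-grid discretization), but your version makes explicit a point that the paper's argument hides inside the extension step: because $\La_f(x,\X)$ fixes the left endpoint at $x$ and takes a supremum over the right endpoint, one only needs $x$ --- not its nearest net point --- to lie outside $M_f(\ell)$ for the nearest-neighbor approximation $|f(x)-f(\phi_N(x))|\le\ell\delta$ to hold. This lets you dispense with the Lipschitz extension entirely, and incidentally sidesteps the minor issue that the unconstrained McShane--Whitney extension of $\evalat{f}{M'_f}$ need not be $[0,1]$-valued (which the paper implicitly handles by truncation when it bounds $|f-\tilde f|$ by $1$). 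Your arithmetic checks out: $\ell\delta=t/4$ and $L/\ell=t^2/16$ give $\namb{f-g}\le t/(2\sqrt{2})\le t/2$, and the grid of mesh $t/2$ contributes at most $(16/t)^{|N|}$ surrogates, yielding the stated bound.
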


The proof of Theorem \ref{thm:cov-num}
will be based upon
following result:
\begin{lemma}[\citet{GottliebKK13-simbad+IEEE}, Lemma 5.2]
  \label{lem:lipcov}
For $L,t>0$,
\beq
\log
\covn(t,
\Lip_L(\X,\rho),
\ell_\infty) &\le&
\covn(t/(8L),\X,\rho)
\log(8/t)
.
\eeq
In particular, for doubling spaces with $\diam(\X)=1$, we have
\beq
\log\covn(t,
\Lip_L(\X,\rho),
\ell_\infty) &\le&
\paren{\frac{16L}{t}}^{\ddim(\X)}
\log\frac8t
.
\eeq
\end{lemma}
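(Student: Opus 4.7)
The plan is a standard Kolmogorov--Tikhomirov style construction: discretize the domain $\X$ by a net at an appropriate scale, discretize the range $[0,1]$ by a uniform grid, and parameterize the cover by all grid-valued assignments on the net. The Lipschitz hypothesis will propagate control from the net back to all of $\X$.

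First, set $\delta := t/(8L)$ and fix a $\delta$-cover $N\subseteq\X$ realizing $|N|=\covn(\delta,\X,\rho)$. For each $x\in\X$, designate a nearest neighbor $\phi_N(x)\in N$, so that $\rho(x,\phi_N(x))\le\delta$. Next, choose a rounding resolution $\eta := t/4$, and for each $f\in\Lip_L(\X,\rho)$ define $\tilde f:\X\to[0,1]$ by
\[
\tilde f(x) \;:=\; \eta\lfloor f(\phi_N(x))/\eta\rfloor.
\]
By the triangle inequality combined with the Lipschitz condition,
\[
\ninf{f-\tilde f} \;\le\; L\,\rho(x,\phi_N(x)) + |f(\phi_N(x)) - \tilde f(\phi_N(x))| \;\le\; L\delta + \eta \;=\; \tfrac{t}{8}+\tfrac{t}{4} \;\le\; t,
\]
so the family $\{\tilde f : f\in\Lip_L(\X,\rho)\}$ is a $t$-cover of $\Lip_L(\X,\rho)$ in $\ell_\infty$.

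Second, I count this family. Each $\tilde f$ is fully determined by its values on $N$, each of which lies in the finite grid $\{0,\eta,2\eta,\ldots\}\cap[0,1]$ of size at most $\lceil 1/\eta\rceil+1\le 8/t$ (the bound being trivial when $t>1$, since then all $[0,1]$-valued functions are within $\ell_\infty$-distance $t$ of each other). Consequently the cover has cardinality at most $(8/t)^{|N|}$, and taking logarithms yields
\[
\log\covn(t,\Lip_L(\X,\rho),\ell_\infty) \;\le\; |N|\,\log(8/t) \;=\; \covn(t/(8L),\X,\rho)\,\log(8/t),
\]
which is the first assertion. The doubling-space inequality follows by substituting the covering property (\ref{eq:ddim-pack}), applied at scale $t/(8L)$ with $\diam(\X)=1$, namely $\covn(t/(8L),\X,\rho)\le(16L/t)^{\ddim(\X)}$, into the preceding display.

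There is no substantive obstacle; the only bookkeeping point is to balance the net resolution $\delta$ against the grid resolution $\eta$ so that $L\delta+\eta\le t$, while simultaneously producing the stated constants $t/(8L)$ in the inner covering number and $8/t$ in the logarithmic factor. The choice $\delta=t/(8L)$ and $\eta=t/4$ achieves both.
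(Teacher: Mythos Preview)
Your argument is correct and is exactly the standard Kolmogorov--Tikhomirov discretization that underlies this estimate: net the domain at scale $t/(8L)$, quantize the range at scale $t/4$, and count assignments. The paper itself does not prove this lemma; it simply cites it as Lemma~5.2 of \citet{GottliebKK13-simbad+IEEE}, where the same construction appears. Your choice of $\delta=t/(8L)$ and $\eta=t/4$ gives $L\delta+\eta=3t/8\le t$ with room to spare, and your cardinality bound $\lfloor 4/t\rfloor+1\le 8/t$ (for $t\le1$, the only interesting regime) is clean.
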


\begin{proof}[Proof of Theorem~\ref{thm:cov-num}]
Recall the definition of 
the level set $M_f(\cdot)$ in (\ref{eq:Mf}).
By
(\ref{eq:markov}),
we have
$\mu(M_f(L/t))\le t$
for any
$f\in\barLw_L(\X,\rho,\mu)$
and $t>0$,
and by construction, 
$\La_f(x,\X)\le L/t$
for all $x\in
M'_f(L/t)
:=
\X\setminus M_f(L/t)
$.
Thus, for all $f\in\barLw_L(\X,\rho,\mu)$, we have
\beqn
\label{eq:Mft}
\evalat{f}{M'_f(L/t)} \in \Lip_{L/t}(M'_f(L/t),\rho).
\eeqn
Let $\hat F$ be a $t/2$-cover of
$\Lip_{4L/t^2}(\X,\rho)$
under $\ell_\infty$.
We claim that $\hat F$ is a $t$-cover of $\barLw_L(\X,\rho,\mu)$ under $\Lp2(\X,\rho,\mu)$.
Indeed, choose an $f\in\barLw_L(\X,\rho,\mu)$. It follows from (\ref{eq:Mft})
that
\beqn
\evalat{f}{M'_f(4L/t^2)} \in \Lip_{4L/t^2}(M'_f(4L/t^2),\rho).
\eeqn
Via the McShane-Whitney Lipschitz extension \citep{MR1562984,Whitney1934},
there is an $\tilde f\in\Lip_{4L/t^2}(\X,\rho)$ coinciding with $f$ on
$M'_f(4L/t^2)$.
Since $\hat F$ is a $t/2$-cover, there is an $\hat f\in \hat F$
such that $\namb{\tilde f-\hat f}\le\ninf{\tilde f-\hat f}\le t/2$.
Therefore
\beq
\namb{f-\hat f} &\le&
\namb{f-\tilde f} + \namb{\tilde f-\hat f} \\
&=&
\paren{\int_\X(f(x)-\tilde f(x))^2\d\mu(x)}^{1/2} + \namb{\tilde f-\hat f} \\
&=&
\paren{\int_{
    M_f(4L/t^2)    
  }(f(x)-\tilde f(x))^2\d\mu(x)}^{1/2} + \namb{\tilde f-\hat f} \\
&\le&
\paren{\int_{
    M_f(4L/t^2)
  }1^2\d\mu(x)}^{1/2} + \namb{\tilde f-\hat f} \\
&\le& t/2+t/2=t.
\eeq
The claim follows from 
Lemma~\ref{lem:lipcov}, which bounds the size of (a minimal) $\hat F$.
\end{proof}

\subsection{Empirical covering numbers}
The main result of this section is a bound on the empirical covering numbers. To avoid trivialities,
we state our asymptotic bounds in $n$
under the assumption
that $\ddim(\X),L\ge1$.

\begin{theorem}[Empirical $\Lp2$ Covering Numbers]
  \label{thm:emp-cov-num}
  Let $(\X,\rho,\mu)$ be a doubling metric measure space
  (the ambient space)
  with $\diam(\X)\le1$ and $(\X_n,\rho,\mu_n)$ its empirical realization.
  
Then, for constant $\delta>0$ and $L,\ddim(\X)\ge1$, we have that
\beq
\log
\covn(
(\alpha+1)\eps_0
,
\barLw_L(\X,\rho,\mu),
\Lp2(\mu_n) 
)
&\le&
\paren{\frac{L}{\alpha^3\eps_0^3}}^{\ddim(\X)}
\log\frac{1}{\alpha\eps_0},
\qquad \alpha>0
\eeq
holds with probability at least $1-2\delta$,
where
$\eps_0\le C_\delta \sqrt{L}n^{-\ifrac{1}{8d}}$
and $C_\delta>0$ is a universal constant.
\end{theorem}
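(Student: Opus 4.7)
My plan is to mirror the ambient argument of Theorem~\ref{thm:cov-num}, with the measure-theoretic bound $\mu(M_f(\ell))\le L/\ell$ replaced by a high-probability empirical analog $\mu_n(M_f(\ell))\lesssim L/\ell+\eps_0^2$ that is uniform in $f\in\barLw_L$. Specifically, fix $\ell>0$ (to be tuned). Given $f\in\barLw_L(\X,\rho,\mu)$, the restriction $\evalat{f}{M'_f(\ell)}$ is $\ell$-Lipschitz by (\ref{eq:Mft}), and McShane--Whitney extends it to some $\tilde f\in\Lip_\ell(\X,\rho)$ coinciding with $f$ off $M_f(\ell)$. Let $\hat F$ be an $\ell_\infty$-cover of $\Lip_\ell(\X_n,\rho)$ at scale $\alpha\eps_0$; by Lemma~\ref{lem:lipcov} (applied to the induced doubling subspace $\X_n$, whose doubling dimension is controlled by $\ddim(\X)$),
\[
\log|\hat F|\le\paren{\frac{16\ell}{\alpha\eps_0}}^{\ddim(\X)}\log\frac{8}{\alpha\eps_0}.
\]
Picking $\hat f\in\hat F$ with $\ninf{\tilde f-\hat f}\le\alpha\eps_0$, and using that $f,\tilde f\in[0,1]$ agree off $M_f(\ell)$,
\[
\nemp{f-\hat f}\le\nemp{f-\tilde f}+\nemp{\tilde f-\hat f}\le\sqrt{\mu_n(M_f(\ell))}+\alpha\eps_0.
\]

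The heart of the proof is then controlling $\mu_n(M_f(\ell))\le\eps_0^2$ with high probability, uniformly over $f\in\barLw_L$. Here I would invoke Lemma~\ref{lem:repair} to approximate $f$ in $\ell_\infty$ by a defect-free function $f^\star$ (so that $M_{f^\star}(\ell)\subseteq M_f(c\ell)$ for an absolute constant $c$, and vice versa up to a bounded scale shift), followed by Lemma~\ref{lem:M_f(ell)}, which supplies a Glivenko--Cantelli-type uniform bound on $\mu_n(M_g(\cdot))$ over the defect-free sub-class. Combined with (\ref{eq:markov}), this yields $\mu_n(M_f(\ell))\le L/\ell+C\eps_0^2$ on a $1-2\delta$ event, uniformly in $f\in\barLw_L$. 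Choosing $\ell\asymp L/\eps_0^2$ makes the first term $O(\eps_0^2)$, so that $\nemp{f-\hat f}\le(\alpha+1)\eps_0$ after absorbing constants. The log-cardinality inherits the exponent $(L/(\alpha\eps_0^3))^{\ddim(\X)}$ with a $\log(1/(\alpha\eps_0))$ slack, matching the stated bound (the precise $\alpha^{-3}$ dependence is recovered by threading the cover scale through both the truncation $\ell$ and the Lipschitz cover radius). The rate $\eps_0=O(\sqrt{L}\,n^{-1/(8d)})$ is inherited from the concentration in Lemma~\ref{lem:M_f(ell)}; the exponent $1/(8d)$ traces back to the cubic $t$-dependence in Theorem~\ref{thm:cov-num} and the square root required to pass from a level-set mass $\mu_n(M_f(\ell))$ to an $\Lp2(\mu_n)$ distance.

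The single most delicate step is unquestionably the uniform concentration of the level-set masses $\mu_n(M_f(\ell))$ across $f\in\barLw_L$. The family $\{M_f(\ell):f\in\barLw_L\}$ is uncountable with no obvious VC or Rademacher handle, and each $M_f$ is defined through $f$'s local behavior at scales finer than what any sample resolves; additionally, membership in $\barLw_L$ itself is defined with respect to the unknown distribution $\mu$. The defect-repair strategy finesses this by reducing the uncountable family to a discretized, finite-parameter sub-class of defect-free functions, for which Lemma~\ref{lem:M_f(ell)} establishes concentration via a covering/chaining argument, at the cost of an $\ell_\infty$-perturbation that must remain dominated by the cover scale $\alpha\eps_0$. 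This interplay is exactly what dictates the two-scale form $((\alpha+1)\eps_0,\alpha\eps_0)$ in the statement and the precise shape of the rate $\eps_0$.
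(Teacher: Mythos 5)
Your proposal is correct in outline but takes a genuinely different route from the paper. The paper never builds the empirical cover directly: it first proves a uniform distance-preservation result (Theorem~\ref{thm:distances}), showing that with probability $1-2\delta$ every pair $f,g\in\barLw_L(\X,\rho,\mu)$ satisfies $\nemp{f-g}\le 6\namb{f-g}+\eps_+$, and then simply transports the \emph{ambient} cover of Theorem~\ref{thm:cov-num} at scale $\alpha\eps_0/6$ into an $(\alpha+1)\eps_0$-cover under $\Lp2(\mu_n)$. That comparison theorem is proved by passing both norms through a Voronoi $r$-net $V$ and controlling $\norm{\pi-\pi_n}_1$ on the induced cells, in addition to the level-set control. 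You instead construct the empirical cover in one shot: repair, truncate at level $\ell$, McShane--Whitney-extend, and cover $\Lip_\ell$ in $\ell_\infty$, so the only probabilistic ingredient you need is the uniform bound on $\mu_n(M_{f^\star}(\ell))$ over the defect-free subclass --- exactly Corollary~\ref{cor:supMf}, i.e.\ Lemma~\ref{lem:M_f(ell)}'s fixed finite partition combined with the TV concentration (\ref{eq:TV-bk}). (Note that it is the corollary, not the purely combinatorial Lemma~\ref{lem:M_f(ell)}, that carries the concentration, and the mechanism is a finite-partition union argument rather than chaining.) Your route bypasses the $r$-net layer and the $\pi$ versus $\pi_n$ bookkeeping entirely, which is cleaner, and threading the parameters ($\eta\asymp\eps_0$, $\ell\asymp L/(\alpha\eps_0)^2$) yields an $\eps_0$ at least as good as the stated $C_\delta\sqrt{L}\,n^{-1/8d}$; what it gives up is Theorem~\ref{thm:distances} as a standalone statement of independent interest.

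One claim in your sketch is false as written, though not load-bearing: after repair you assert $M_{f^\star}(\ell)\subseteq M_f(c\ell)$ ``and vice versa up to a bounded scale shift.'' The forward inclusion holds (indeed with $c=1$, by (\ref{eq:barff})), but there is no reverse inclusion: repair only decreases local slopes, possibly drastically, so $M_f(\ell)$ need not be contained in any $M_{f^\star}(\ell')$. Consequently you cannot bound $\mu_n(M_f(\ell))$ for the original $f$; you must run the entire chain through $f^\star$, i.e.\ extend $f^\star$ off $M_{f^\star}(\ell)$ and charge the single $\ell_\infty$ repair cost $\ninf{f-f^\star}\le4\eta$ to the cover radius. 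Your closing remarks indicate you intend exactly this, so the fix is cosmetic, but the two-sided level-set comparison should be deleted.
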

The proof will be given below, and follows directly
from
Theorem~\ref{thm:cov-num}
and the following result:
\begin{theorem}[Preserving distances
between $\Lp2(\mu)$ and $\Lp2(\mu_n)$]
  \label{thm:distances}
  Let $(\X,\rho,\mu)$ be a doubling metric measure space
  (the ambient space)
  with $\diam(\X)\le1$ and $(\X_n,\rho,\mu_n)$ its empirical realization.
  Then, with probability at least $1-2\delta$, we have that all
  $f,g\in \barLw_L(\X,\rho,\mu)$
  satisfy  
  \beq
  \nemp{f-g} &\le& 6\namb{f-g} 
  \\  &+& 
  25n^{-\ifrac{1}{4d}} +
  15\sqrt{L}n^{-\ifrac{1}{8d}} +
  (6+2^{d/4})n^{-\ifrac{1}{8}} +
  \left(
  \frac{162}{n}\log\frac2\delta
  \right)
  ^{\ifrac{1}{4}},
  \eeq
  where $d=\ddim(\X)$ and
  we adhere to the notational convention in (\ref{eq:evalat}).
\end{theorem}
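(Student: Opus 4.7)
The plan is to approximate each $f \in \barLw_L$ by a genuinely Lipschitz surrogate, pay for the approximation in empirical level-set mass, and handle the surrogate via standard uniform convergence for Lipschitz classes. Fix a threshold $\ell > 0$, to be optimized at the end. For each $f \in \barLw_L$, apply the McShane--Whitney extension to $\evalat{f}{\X \setminus M_f(\ell)}$, producing $\tilde f \in \Lip_\ell(\X,\rho)$ that coincides with $f$ off $M_f(\ell)$. Since $f, \tilde f \in [0,1]$, the pointwise error $|f - \tilde f| \le 1$ is supported on $M_f(\ell)$, so
\[
\namb{f-\tilde f}^{2} \le \mu(M_f(\ell)) \le L/\ell \quad \text{by (\ref{eq:markov})},
\]
and likewise $\nemp{f - \tilde f}^{2} \le \mu_n(M_f(\ell))$.

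Applying the triangle inequality twice (once on each side) splits the goal into three sub-tasks: (a) a uniform bound on $\mu_n(M_f(\ell))$ over $f \in \barLw_L$; (b) a uniform bound on $|\nemp{\tilde f - \tilde g} - \namb{\tilde f - \tilde g}|$ over the regularized pairs; and (c) a bound on $\namb{\tilde f - \tilde g}$ in terms of $\namb{f - g}$, which costs only $2\sqrt{L/\ell}$ by the same reasoning. For (b), every $\tilde f$ lies in $\Lip_\ell(\X,\rho)$, whose $\ell_\infty$-covering number is controlled by Lemma~\ref{lem:lipcov}. A Dudley-style chaining argument together with a union bound and Hoeffding's inequality then delivers, with probability at least $1 - \delta$, a uniform estimate of the form $\nemp{\tilde f - \tilde g}^{2} \le \namb{\tilde f - \tilde g}^{2} + \epsilon_{\mathrm{Lip}}(n, \ell, \delta)$, where $\epsilon_{\mathrm{Lip}}$ scales roughly like $\ell^{\ddim/2}/\sqrt{n}$ up to logs, plus a $\sqrt{\log(1/\delta)/n}$ slack.

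The central obstacle is (a). The set family $\{M_f(\ell) : f \in \barLw_L\}$ has no direct VC-type bound, because its members are defined implicitly through the slope functional $\La_f$ rather than as a parametric set system, so a standard uniform Glivenko--Cantelli law does not apply. The plan is to invoke the structural results that come later in the paper: Lemma~\ref{lem:repair} produces a ``defect-free'' approximant $f^* \in \barLw_{O(L)}$ of $f$ with $\ninf{f - f^*}$ small, and Lemma~\ref{lem:M_f(ell)} then supplies a uniform deviation bound $\mu_n(M_{f^*}(\ell)) \le \mu(M_{f^*}(\ell)) + \epsilon_{\mathrm{lev}}(n)$ over the defect-free subclass. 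Composing gives $\mu_n(M_f(\ell)) \le L/\ell + \epsilon_{\mathrm{lev}}(n)$ uniformly in $f \in \barLw_L$.

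To finish, I would combine the three ingredients using $\sqrt{a+b} \le \sqrt{a} + \sqrt{b}$ and several applications of the triangle inequality; the coefficient $6$ in front of $\namb{f - g}$ arises from tracking constants through the four triangle applications (two empirical, two ambient). Optimizing $\ell \asymp \sqrt{L}\,n^{1/(4d)}$ balances $\sqrt{L/\ell}$ against the $\ell^{\ddim/2}/\sqrt{n}$ Lipschitz-chaining error and produces the advertised $\sqrt{L}\,n^{-1/(8d)}$ summand; the remaining $n^{-1/(4d)}$, $n^{-1/8}$, and $(n^{-1} \log(2/\delta))^{1/4}$ terms absorb, respectively, the residual Dudley tails, the cover resolution at the optimized scale, and the Hoeffding concentration slack. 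The hardest single step is (a): without the defect/repair machinery, the level sets $M_f(\ell)$ form a class too wild for direct empirical-process control.
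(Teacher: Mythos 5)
Your overall architecture matches the paper's: McShane--Whitney surrogates $\tilde f\in\Lip_\ell$ obtained by restricting off the level set $M_f(\ell)$, the reduction of the hard part to a uniform bound on $\mu_n(M_f(\ell))$, and the use of Lemma~\ref{lem:repair} together with Lemma~\ref{lem:M_f(ell)} to obtain that bound. The only genuinely different ingredient is step (b): you propose Dudley chaining plus Hoeffding on $\Lip_\ell$ to compare $\nemp{\tilde f-\tilde g}$ with $\namb{\tilde f-\tilde g}$, whereas the paper discretizes through an $r$-net $V$ with its Voronoi partition and reduces the comparison to the elementary finite-support bound on $\norm{\pi-\pi_n}_1$ from (\ref{eq:TV-bk}); both routes work, yours is heavier machinery for comparable rates, the paper's is more elementary and is what produces the explicit $(2/r)^{d}/n$ terms.

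There is, however, one step that fails as written: the ``composing'' move in (a). From $\ninf{f-f^*}\le 4\eta$ you cannot transfer a bound on $\mu_n(M_{f^*}(\ell))$ to one on $\mu_n(M_f(\ell))$ --- an $\ell_\infty$-small perturbation can change local slopes arbitrarily (this is precisely why the defect machinery is needed in the first place), and the pointwise inequality $\La_{f^*}(x,\X)\le\La_f(x,\X)$ from Lemma~\ref{lem:repair} gives $M_{f^*}(\ell)\subseteq M_f(\ell)$, i.e.\ an inequality in the \emph{wrong} direction for your purposes. The paper sidesteps this by never needing $\mu_n(M_f(\ell))$ for the original $f$: it replaces the entire class by its smoothed version $\{f^\eta\}$ at the outset, pays $8\eta$ once via $\nemp{f-g}\le 8\eta+\nemp{f^\eta-g^\eta}$, and then runs the whole surrogate construction and level-set argument on $f^\eta$ alone, so that only $\mu_n(M_{f^\eta}(L/t))$ (controlled by Corollary~\ref{cor:supMf}) ever appears. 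Your write-up has all the ingredients to make this fix, but as stated the transfer back to $f$ is a gap. Two smaller imprecisions: Lemma~\ref{lem:M_f(ell)} supplies only the sandwich $M_f(\ell)\subseteq U_f\subseteq M_f(\ell/4c)$ over a fixed finite partition (the deviation bound is a separate finite-class TV step, and it degrades the level from $\ell$ to $\ell/4c$, costing the factor $24$ visible in Corollary~\ref{cor:supMf}); and your choice $\ell\asymp\sqrt{L}\,n^{1/4d}$ gives $\sqrt{L/\ell}\asymp L^{1/4}n^{-1/8d}$ rather than the advertised $\sqrt{L}\,n^{-1/8d}$ --- the paper's choice is $\ell=L/t=n^{1/4d}$.
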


\begin{proof}
Let $r,t,\eta>0$ be parameters to be chosen later.
Our first step is to approximate the function class 
$\barLw_L(\X,\rho,\mu)$ by its ``$\eta$-smoothed'' version
\beq
{\barLw_L}\supeta(\X,\rho,\mu)
:=\set{f^\eta:f\in \barLw_L(\X,\rho,\mu)}
\subseteq \barLw_L(\X,\rho,\mu)
,
\eeq
where $f^\eta$ is the function constructed in Lemma~\ref{lem:repair},
when the latter is invoked with the parameter $\ell=L/t$.
In particular, $\ninf{f-f^\eta}\le4\eta$ and
$\La_{f^\eta}(x,\X)\le\La_{f}(x,\X)$ for all $x\in\X$.
Thus, for $f,g\in\barLw_L(\X,\rho,\mu)$,
\beqn
\nonumber
\nemp{f-g} &\le& \ninf{f-f^\eta}+\ninf{g-g^\eta}+\nemp{f^\eta-g^\eta}\\
&\le& 8\eta+\nemp{f^\eta-g^\eta},
\eeqn
and so it will suffice to bound the latter.

Let $V\subset\X$ be an $r$-net of $(\X,\rho)$;
by the doubling property (\ref{eq:ddim-pack}),
\beqn
\label{eq:vsize}
|V|\le(2/r)^{\ddim(\X)}.
\eeqn
The net $V$ induces the Voronoi partition $\X=\bigcup_{v\in V}W(v)$,
such that for each cell $W(v)$ we have
$x \in W(v) \implies \rho(x,v) \leq r$, as well as
the measure under $\mu$, denoted by $\pi(v):=\mu(W(v))$.
Together, these
induce the 
finite
metric measure space
$
(V,\rho,\pi)
$.
The map
$\phi_V:\X\to V$
takes
each $x\in\X$ with its Voronoi cell;
thus,
$\phi_V\inv(\phi_V(x))=W(v)$ for all $x\in W(v)$.

The proof proceeds in several steps,
in which always $f,g\in{\barLw_L}\supeta(\X,\rho,\mu)$
and the notational convention in (\ref{eq:evalat}) is used.
Define $
M_f(L/t),
M_g(L/t)
\subset\X$
as in (\ref{eq:Mf}).
As in the proof of Theorem~\ref{thm:cov-num},
we have that $f$ is $L/t$-Lipschitz on
$M'_f(L/t):=\X\setminus
M_f(L/t)
$, with extension $\tilde f\in\Lip_{L/t}(\X,\rho)$.
Define $\tilde g\in\Lip_{L/t}(\X,\rho)$ analogously,
as extending $
\evalat{g}{M'_g(L/t)}
\in\Lip_{L/t}(M'_g(L/t),\rho)$.

\paragraph{Comparing
the norms
  $\nemp{f-g}^2
=\frac1n\sum_{i=1}^n(f(X_i)-g(X_i))^2
  $
  and
  $\ndsc{\tilde f-\tilde g}^2
  =
\sum_{v\in V}(\tilde f(v)-\tilde g(v))^2\pi(v)  
  $.}
We begin by invoking (\ref{eq:abc}):
\beqn
\label{eq:nemp<}
\nemp{f-g}^2
\le
3\nemp{\tilde f-\tilde g}^2
+
3\nemp{f-\tilde f}^2
+
3\nemp{g-\tilde g}^2.
\eeqn
The second and third terms
in the bound (\ref{eq:nemp<})
are bounded identically:
\beqn
\nemp{f-\tilde f}^2
&=&
\frac1n\sum_{i=1}^n(f(X_i)-\tilde f(X_i))^2\nonumber\\
&\le&
\frac1n\sum_{i:X_i\in M_f(L/t)}1
\;=\;
\label{eq:muM}
\mu_n(M_f(L/t))
.
\eeqn

To estimate the first term in the bound (\ref{eq:nemp<}),
recall that
$\tilde f$ is $L/t$-Lipschitz on $\X$
and $\rho(x,\phi_V(x))\le
r$
(and the same holds for $\tilde g$),
whence
\beq
|\tilde f(X_i)-\tilde g(X_i)|
&\le&
|\tilde f(X_i)-\tilde f(\phi_V(X_i))|
+
|\tilde g(X_i)-\tilde g(\phi_V(X_i))|
+
|\tilde f(\phi_V(X_i))-\tilde g(\phi_V(X_i))|\\
&\le&
|\tilde f(\phi_V(X_i))-\tilde g(\phi_V(X_i))|
+
2Lr/t.
\eeq
Using $(a+b)^2\le2a^2+2b^2$,
this yields
\beq
\nemp{\tilde f-\tilde g}^2
&=&
\frac1n\sum_{i=1}^n(\tilde f(X_i)-\tilde g(X_i))^2\\
&\le&
\frac2n\sum_{i=1}^n(\tilde f(\phi_V(X_i))-\tilde g(\phi_V(X_i)))^2
+8(Lr/t)^2\\
&=&
2\sum_{v\in V}(\tilde f(v)-\tilde g(v))^2\mu_n(W(v))
+8(Lr/t)^2\\
&=&
2
\nmsc{\tilde f-\tilde g}{2}{\pi_n}^2
+8(Lr/t)^2,
\eeq
where $\pi_n$ is the measure on $V$ given by $\pi_n(v)=
\mu_n(W(v))$.
Observe that
\beq
\abs{
\ndsc{\tilde f-\tilde g}^2
-
\nmsc{\tilde f-\tilde g}{2}{\pi_n}^2
}
&\le&
\sum_{v\in V}(\tilde f(v)-\tilde g(v))^2|\pi(v)-\pi_n(v)|\\
&\le&
\sum_{v\in V}|\pi(v)-\pi_n(v)|=\norm{\pi-\pi_n}_1.
\eeq

A bound on $\norm{\pi-\pi_n}_1
$
is provided by (\ref{eq:TV-bk}):
with probability at least $1-\delta$,
\beq
\norm{\pi-\pi_n}_1 
\le 
\sqrt{\frac{|V|}{n}}+\sqrt{\frac2n\log\frac2\delta}
\le
\sqrt{\frac{
(2/r)^{\ddim(\X)}    
  }{n}}+\sqrt{\frac2n\log\frac2\delta}
  .
\eeq
To bound
$\mu_n(M_f(L/t))$ in (\ref{eq:muM}),
we invoke Corollary~\ref{cor:supMf}:
with probability at least $1-\delta$,
\beq
\sup\set{
  \mu_n(M_f(L/t))
  :
  f\in{\barLw_L}\supeta(\X,\rho,\mu)
  }
\le
24t +
\frac12\sqrt{\frac{
(2L/\eta t)^{\ddim(\X)}
  }{n}}+
\frac12\sqrt{\frac2n\log\frac2\delta}
.
\eeq
These calculations culminate in the bound
\beqn
\label{eq:nemp<ndsc}
\nemp{f-g}^2 &\le& 6\ndsc{\tilde f-\tilde g}^2 +
24(Lr/t)^2 + 144t 
\\
&+&
\nonumber
6\sqrt{\frac{(2/r)^{\ddim(\X)}}{n}} +
9\sqrt{\frac2n\log\frac2\delta} +
3\sqrt{\frac{(2L/\eta t)^{\ddim(\X)}}{n}}
.
\eeqn

\paragraph{Comparing
the norms
  $\ndsc{\tilde f-\tilde g}^2
  $
and
  $\namb{f-g}^2$
  .}

Since $\tilde f$ is $L/t$-Lipschitz and $\diam(W(v))\le r$, we have
$\tilde f(W(v))\subseteq f(v)\pm Lr/t$, and analogously for $\tilde g$.
It follows that
\beqn
\label{eq:abs-ub}
|\tilde f(v)-\tilde g(v)|
&\le&
|\tilde f(x)-\tilde g(x)|
+2Lr/t
,
\qquad x\in W(v)
\eeqn
and hence

\beqn
\label{eq:sqr-ub}
(\tilde f(v)-\tilde g(v))^2
&\le&
2(\tilde f(x)-\tilde g(x))^2
+8(Lr/t)^2
,
\qquad x\in W(v)
.
\eeqn
Integrating,
\beq
\ndsc{\tilde f-\tilde g}^2=
\sum_{v\in V}(\tilde f(v)-\tilde g(v))^2\pi(v)
&=&
\sum_{v\in V}\int_{W(v)}(\tilde f(v)-\tilde g(v))^2\d\mu(x)\\
&\le&
\sum_{v\in V}\int_{W(v)}[
2(\tilde f(x)-\tilde g(x))^2
+8(Lr/t)^2  
]\d\mu(x)\\
&=&
2\namb{\tilde f-\tilde g}^2
+8(Lr/t)^2.
\eeq

Using
(\ref{eq:abc}) and
the triangle inequality,
we have
\beq
\namb{\tilde f-\tilde g}^2
&\le&
3\namb{f-g}^2
+
3\namb{f-\tilde f}^2
+
3\namb{g-\tilde g}^2
\\
&\le&
3\namb{f-g}^2
+
6t
\eeq
(since
each of
$\namb{f-\tilde f},\namb{g-\tilde g}$
is at most $\sqrt t$).

Combining these yields
\beqn
\label{eq:ndsc<namb}
\ndsc{\tilde f-\tilde g}^2 &\le&
6\namb{f-g}^2+12t+8(Lr/t)^2.
\eeqn

\paragraph{Finishing up.}
Combining (\ref{eq:nemp<ndsc}) and (\ref{eq:ndsc<namb}) yields
\beqn
\label{eq:nemp<namb}
\nemp{f-g}^2 &\le& 36\namb{f- g}^2+
72(Lr/t)^2 + 216t \\
\nonumber
&+&
6\sqrt{\frac{(2/r)^{\ddim(\X)}}{n}} +
9\sqrt{\frac2n\log\frac2\delta} +
3\sqrt{\frac{(2L/\eta t)^{\ddim(\X)}}{n}}
.
\eeqn
\paragraph{Choosing $r,t,\eta$.} 
Putting $d=\ddim(\X)$, we choose
$r = n^{-\ifrac{1}{2d}}$, 
$t = Ln^{-\ifrac{1}{4d}}$,
and
$\eta = 2n^{-\ifrac{1}{4d}}$.
For this choice,
\beq
\nemp{f-g}^2 &\le& 36\namb{f- g}^2+
72n^{-{1}/{2d}} + 216Ln^{-{1}/{4d}} \\
&+&
2^{d/2}\cdot6n^{-{1}/{4}} +
9\sqrt{\frac2n\log\frac2\delta} +
3\left(
n^{1/4d}
\right)^{d/2}n^{-\ifrac{3}{8}}.
\eeq

Applying the inequality $\sqrt{a+b}\le\sqrt{a}+\sqrt{b}$ to (\ref{eq:nemp<namb}) 
proves the claim.
\end{proof}

\begin{proof}[Proof of Theorem~\ref{thm:emp-cov-num}]
Let $\eps_+$ be the additive term
in the bound
in Theorem~\ref{thm:distances}:
\beqn
\label{eq:eps+}
\eps_+
=
  25n^{-\ifrac{1}{4d}} +
  15\sqrt{L}n^{-\ifrac{1}{8d}} +
  (6+2^{d/4})n^{-\ifrac{1}{8}} +
  \left(
  \frac{162}{n}\log\frac2\delta
  \right)
  ^{\ifrac{1}{4}}.
\eeqn
Since for fixed $\delta>0$
and $d,L\ge1$
the dominant term in (\ref{eq:eps0})
is 
$15\sqrt{L}n^{-\ifrac{1}{8d}}$,
and so
there is a $C=C_\delta<0$
such that
\beqn
\label{eq:eps0}
\eps_+\le C_\delta \sqrt{L}n^{-\ifrac{1}{8d}}=:\eps_0.
\eeqn
Now Theorem~\ref{thm:distances} implies that
any $\eps$-cover of 
$\barLw_L(\X,\rho,\mu)$
under $\Lp2(\mu)$
also provides a
$(6\eps+\eps_0)$-cover
under $\Lp2(\mu_n)$.
Equivalently, an 
$\alpha\eps_0/6$-cover
of the former
yields an
$(\alpha+1)\eps_0$-cover
of the latter, for $\alpha>0$.
Hence,
\beq
\log
\covn(
(\alpha+1)\eps_0
,
\barLw_L(\X,\rho,\mu),
\Lp2(\mu_n) 
)
&\le&
\log
\covn(
\alpha\eps_0/6
,
\barLw_L(\X,\rho,\mu),
\Lp2(\mu)
)
\\
&\le&
\paren{\frac{cL}{\alpha^3\eps_0^3}}^{\ddim(\X)}
\log\frac{1}{\alpha\eps_0},
\eeq
where $c>0$ is a universal constant.

\end{proof}

\section{Defect free functions}
\label{sec:defects}

This section presents results that were invoked in the proofs in Section \ref{sec:cov-num}.
It constitutes the core of the analytic and combinatorial
structure we discovered in the
very general setting
of real-valued functions on metric spaces.
Such a function may fail to be on-average smooth for two ``moral''
reasons: due to ``large jumps'' or ``small jumps''. The former is
witnessed by two nearby points $x,x'$ for which $|f(x)-f(x')|$ is large
--- say, $1$. The latter is witnessed by two nearby (say, $\eps$-close)
points $x,x'$ for which $\eps\ll|f(x)-f(x')|\le T(\eps)\ll1$ --- say, $T(\eps)=\sqrt\eps$.
It turns out that the large jumps do not present a problem
for the combinatorial structure we seek in
Lemma~\ref{lem:M_f(ell)}, which forms the basis for
Corollary~\ref{cor:supMf}, the latter a crucial component in
the empirical covering number bound, Theorem~\ref{thm:distances}.
Rather, it is the small jumps --- which we formalize as {\em defects} below ---
that present an obstruction. Fortunately, as we show in
Lemma~\ref{lem:repair}, {\em any} bounded real-valued function on a 
doubling
metric space admits a defect-free approximation under $\ell_\infty$.

\subsection{Definition and structure}
\label{sec:defects-defs}
For a given $f:\X\to\R$,
$\ell>0$,
and $x,y\in\X$,
we say that
$y$ is
an $\ell$-{\bf slope witness} for $x$
(w.r.t. $f$)
if ${\Del{x}{y}}/{\rho(x,y)}\ge\ell$.
For
$\eta,\ell>0$
and $\ci\ge1$,
we say that
an $x\in\X$ is an $(\eta,\ell,\ci)$-{\bf defect} of $f$ if:
\begin{enumerate}
    \item[(a)] $\La_f(x)\ge\ell$
    \item[(b)]
Every $\ell/\ci$-slope witness $y$ of $x$ verifies $\Del{x}{y} \le \eta$.
\end{enumerate}
Define $\Xi_f(\eta,\ell,\ci)\subseteq\X$ to be the set of all
$(\eta,\ell,\ci)$-defects of $f$.
Note that
$
\Xi_f(\eta,\ell,\ci)
\subseteq
\Xi_f(\eta,\ell,\ci')
$
whenever $\ci'\le\ci$.
For $\eta,\ell>0$, $\ci \geq 1$ define $\G(\eta,\ell,\ci)$ to be the collection of all
$f:\X\to[0,1]$ such that $f$ does not have any $(\eta,\ell,\ci)$-defects.

\begin{lemma}[Combinatorial structure of defect-free functions]
  \label{lem:M_f(ell)}
For every $\eta,\ell>0$, $\ci \geq 1$
there is a partition $\Pi=\set{B_1,\ldots,B_{N}}$ of $\X$
  of size $N\le (2\ell/\eta)^{\ddim(\X)}$
  such that for each
 $f\in\G(\eta,\ell,\ci)$,
we have
\beqn
\label{eq:MUM}
M_f(\ell) \subseteq U_f \subseteq M_f(\ell/4c),
\eeqn
where $M_f(\cdot)$ is the level set defined in (\ref{eq:Mf})
and
\beqn
\label{eq:Uf}
U_f := \bigcup\set{B\in\Pi: B\cap M_f\neq\emptyset}.
\eeqn
\end{lemma}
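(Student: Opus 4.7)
The plan is to take $\Pi$ to be the Voronoi partition of $\X$ induced by an $(\eta/\ell)$-net $V$ of $(\X,\rho)$. Assuming (as in Section~\ref{sec:cov-num}) that $\diam(\X)\le 1$, the covering property~\eqref{eq:ddim-pack} bounds $|\Pi|=|V|\le (2\ell/\eta)^{\ddim(\X)}$, and each Voronoi cell has diameter at most $2\eta/\ell$ by~\eqref{eq:part-diam}. The containment $M_f(\ell)\subseteq U_f$ is immediate, since any $x\in M_f(\ell)$ lies in a cell that witnesses itself; the real work is in the reverse containment.

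For $U_f\subseteq M_f(\ell/(4c))$, I would pick an arbitrary $y\in U_f$. Then $y$ shares some cell $B\in\Pi$ with a point $x\in M_f(\ell)$, so $\rho(x,y)\le 2\eta/\ell$ and $\La_f(x,\X)\ge\ell$. Because $f\in\G(\eta,\ell,c)$, condition~(b) in the defect definition must fail at $x$, which supplies a point $y^*\in\X$ satisfying
\[
   \frac{|f(x)-f(y^*)|}{\rho(x,y^*)}\ \ge\ \frac{\ell}{c}
   \qquad\text{and}\qquad
   |f(x)-f(y^*)|\ >\ \eta.
\]

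From here, I would bound $\La_f(y,\X)$ from below by considering two candidate witnesses for $y$ --- $x$ itself and $y^*$ --- and keeping whichever gives the stronger slope. Writing $\beta := |f(x)-f(y)|/|f(x)-f(y^*)|$, using $x$ directly together with $\rho(x,y)\le 2\eta/\ell$ and $|f(x)-f(y^*)|>\eta$ yields $\La_f(y,\X)\ge \beta\ell/2$; using $y^*$ with the triangle inequality $\rho(y,y^*)\le \rho(x,y)+\rho(x,y^*)$ and $|f(y)-f(y^*)|\ge(1-\beta)|f(x)-f(y^*)|$ yields
\[
   \La_f(y,\X)\ \ge\ \frac{(1-\beta)\,|f(x)-f(y^*)|}{2\eta/\ell + c\,|f(x)-f(y^*)|/\ell}.
\]
The crucial maneuver is to reapply $|f(x)-f(y^*)|>\eta$ in the denominator to absorb the cell-diameter term $2\eta/\ell$ into $c\,|f(x)-f(y^*)|/\ell$, bounding the denominator by $(c+2)\,|f(x)-f(y^*)|/\ell$ and hence $\La_f(y,\X)\ge (1-\beta)\ell/(c+2)$. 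Balancing the two lower bounds over $\beta\in[0,1]$ then delivers $\La_f(y,\X)\ge \ell/(4c)$ for $c\ge 1$, establishing $y\in M_f(\ell/(4c))$.

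The main obstacle is precisely this second inclusion: a priori, two points sharing a Voronoi cell can exhibit wildly different slopes, so a high-slope witness at $x$ must be transferred to any cell-mate $y$ at only a constant-factor loss. The defect-free hypothesis is exactly the lever that makes this transfer possible --- it forces the witness $y^*$ of $x$ to have a label gap $|f(x)-f(y^*)|>\eta$, which is precisely the scale needed so that the triangle-inequality slack arising from $\rho(x,y)\le 2\eta/\ell$ is absorbed into the distance $\rho(x,y^*)$ at only a constant multiplicative cost. The size bound on $\Pi$ then drops out for free from the doubling property of $(\X,\rho)$.
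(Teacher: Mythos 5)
Your construction and overall strategy coincide with the paper's: the partition is the Voronoi partition of an $\eta/\ell$-net, the first inclusion and the cardinality bound are immediate, and the second inclusion is driven by exactly the same lever --- defect-freeness of $f$ at a point $x\in M_f(\ell)$ supplies a witness $y^*$ with slope at least $\ell/c$ \emph{and} label gap $\Del{x}{y^*}>\eta$, which is then transferred to the cell-mate $y$ via the triangle inequality. Where you differ is in the bookkeeping: the paper splits into cases according to whether $\rho(x,y^*)\lessgtr\eta/\ell$ and uses the elementary inequality $|a-b|\vee|a-c|\ge\tfrac12|b-c|$, whereas you parametrize by $\beta=\Del{x}{y}/\Del{x}{y^*}$ and balance two lower bounds. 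This is a clean and arguably more systematic way to organize the same estimate.

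There is, however, a quantitative shortfall in the final step. Your two bounds are $\La_f(y,\X)\ge\beta\ell/2$ and $\La_f(y,\X)\ge(1-\beta)\ell/(c+2)$, and the min-max over $\beta\in[0,1]$ is attained at $\beta=2/(c+4)$, yielding $\La_f(y,\X)\ge\ell/(c+4)$ --- not $\ell/(4c)$. The two coincide at $c=4/3$, and for $c\in[1,4/3)$ your bound is strictly weaker than the one claimed (at $c=1$ you get $\ell/5$ versus the required $\ell/4$), so the inclusion $U_f\subseteq M_f(\ell/(4c))$ is not established on the full range $c\ge1$. The discrepancy traces to your (correct) use of the cell diameter $2\eta/\ell$: the paper's computation effectively works with the distance $\rho(x,u)\le\eta/\ell$ from the witness point to the cell-mate, which buys back the missing factor. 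In fairness, for every $c\ge4/3$ --- and in particular for $c=6$, the only value the paper ever feeds into this lemma via Lemma~\ref{lem:repair} --- your constant $\ell/(c+4)$ is actually \emph{stronger} than $\ell/(4c)$, so the downstream results (Corollary~\ref{cor:supMf} and Theorem~\ref{thm:distances}) are unaffected. To fully match the statement as written you would either need to shrink the net radius to $\eta/(2\ell)$ (at the cost of the partition-size constant) or restate the conclusion with $M_f(\ell/(c+4))$.
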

\begin{proof}
  Let $\Pi$ be the Voronoi partition induced by
  an $\eta/\ell$-net of $\X$.
  Then the claimed bound on $|\Pi|$ holds by (\ref{eq:ddim-pack}) and the first inclusion in (\ref{eq:MUM})
  is obvious by construction; it only remains to show that $U_f \subseteq M_f(\ell/4c)$.

  Choose any $u\in U_f$. Since $\Pi$ is a net, there is some
  $x \in M_f(\ell)$ for which $\rho(x,u) \leq {\eta}/{\ell}$.
  Since $f$ has no $(\eta,\ell,\ci)$-defects and $\La_f(x)\ge\ell$, there must be some
${\ell}/{c}$-slope witness $y\in\X$ of $x$
  for which $\Del{x}{y} > \eta$.
  Invoking (\ref{eq:abc1/2}), we have
$\vmax{\Del{u}{x}}{\Del{u}{y}} \geq \Del{x}{y}/2 > \eta/2$. We consider the two cases:
\begin{enumerate}
    \item[(i)] $\rho(x,y) \leq {\eta}/{\ell}$
    \item[(ii)] $\rho(x,y) > {\eta}/{\ell}$.
\end{enumerate}
In the first case, the triangle inequality implies that 
$\vmax{ \rho(u,x)}{ \rho(u,y) } \leq {2\eta}/{\ell}$,
and hence
\beq
\La_f(u)
\geq
\vmax{ \frac{\Del{u}{x}}{\rho(u,x)}}{\frac{\Del{u}{y}}{\rho(u,y)} }
\geq
\frac{\eta / 2}{2\eta / \ell}
=
\frac{\ell}{4}
\geq
\frac{\ell}{4\ci}
\implies
u \in M_f(\ell/4\ci).
\eeq
For the second case, if $\Del{u}{x} \geq \eta/2$, the proof is the same as in the first case.
Otherwise, $\Del{u}{y} \geq \Del{x}{y}/2$ and so
\beq
\frac{\Del{u}{y}}{\rho(u,y)}
\geq
\frac{\Del{x}{y}}{2 \rho(u,y)}
\geq
\frac{\Del{x}{y}}{2 \cdot 2 \rho(x,y)}
\geq
\frac{\ell}{4\ci}
\implies
u \in M_f(\ell/4\ci),
\eeq
where the second inequality is a result of applying the triangle inequality to the fact that $\rho(u,x) < \rho(x,y)$.
\end{proof}

\subsection{Repairing defects}
The main result of this section is that the problematic ``small jumps'' alluded to in the beginning of Section~\ref{sec:defects}
can be smoothed out via an $\ell_\infty$ approximation.

\begin{lemma}[Defect repair]
\label{lem:repair}
  For each $\eta,\ell>0$ and $f:\X \to [0,1]$,
  there
is an $\bar f\in\G(\eta,\ell, \ci = 6)$ such that $\ninf{f-\bar{f}} \leq 4\eta$
and
\beqn
\label{eq:barff}
\La_{\bar f}(x,\X) \le \La_f(x,\X),\qquad x\in\X.
\eeqn
\end{lemma}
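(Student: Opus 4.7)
The plan is to construct $\bar f$ as a quantization of $f$ taking values in a $2\eta$-spaced discrete set such as $\{2k\eta : k\in\Z\}\cap[0,1]$. The guiding observation is that whenever two distinct values of $\bar f$ differ by at least $2\eta > \eta$, the defect-free property comes almost for free: if $\La_{\bar f}(x,\X) \ge \ell$, then some witness $y$ realizes a slope of at least $\ell \ge \ell/6$ for $\bar f$, which forces $\bar f(y) \neq \bar f(x)$ and hence $|\bar f(x)-\bar f(y)| \ge 2\eta > \eta$. So $x$ cannot be an $(\eta,\ell,6)$-defect of $\bar f$, and it remains only to construct such a discretized $\bar f$ satisfying $\ninf{f-\bar f} \le 4\eta$ and $\La_{\bar f}(x,\X) \le \La_f(x,\X)$ pointwise.

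Naive rounding to the nearest multiple of $2\eta$ satisfies the $\ell_\infty$ bound but fails the slope condition: two points $x,y$ with $|f(x)-f(y)|$ just below $\eta$ but straddling a bucket boundary get mapped $2\eta$ apart in $\bar f$-value, creating a spurious slope of order $2\eta/\rho(x,y)$ that can exceed $\La_f(x,\X)$. To remedy this, I would use a clustering-then-rounding approach: first group together points that are close in both the metric (roughly $\rho(x,y) \le \eta/\ell$) and in $f$-value (roughly $|f(x)-f(y)| \le \eta$), then assign each group a common label on the $2\eta$-spaced lattice. A natural realization is a greedy sweep: process points in an order refined by $f$-value, and for each $x$, inherit an already-assigned neighbor's label if one is available within the merge tolerance; otherwise create a fresh label at the nearest lattice point to $f(x)$.

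The $\ell_\infty$ bound $\ninf{f-\bar f} \le 4\eta$ would then follow from a propagation argument: each merge contributes at most $\eta$ of additional drift to a point's label, and the chain of merges touching any single point can be kept of bounded length, pinning the constant at $4\eta$ when the merge thresholds are tuned carefully. The condition \eqref{eq:barff} on pointwise slopes cannot be expected from an operation that globally replaces $f$ by something smoother, so one must exploit the fact that within a cluster $\bar f$ is constant (slope $0$), while between clusters the labels are set up to reflect genuine $f$-variation rather than rounding artifacts.

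The hard part will be verifying the slope preservation $\La_{\bar f}(x,\X) \le \La_f(x,\X)$. For any pair $(x,y)$ with $\bar f(x) \neq \bar f(y)$ contributing to $\La_{\bar f}(x,\X)$, the clustering \emph{refused} to merge $x$ and $y$, meaning that either $\rho(x,y)$ or $|f(x)-f(y)|$ exceeded the merge tolerance. In the latter case $|f(x)-f(y)|$ is comparable to $|\bar f(x)-\bar f(y)|$, so the $(x,y)$ pair is already a good slope witness for $f$; in the former, the $\bar f$-slope is controlled directly by $\rho(x,y) > \eta/\ell$. The delicate issue is tracking this through chains of merges: the witness $y$ for $\bar f$ at $x$ may need to be replaced by an intermediate point from $y$'s or $x$'s merge chain to produce an $f$-witness at $x$ of at least the same slope, which is exactly the kind of triangle-inequality bookkeeping that underlies the constants $c=6$ and $4\eta$ and mirrors the estimates already used in Lemma~\ref{lem:M_f(ell)}.
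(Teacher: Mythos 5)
Your central device --- forcing $\bar f$ to take values in a $2\eta$-separated lattice so that every slope witness is automatically a ``big jump'' witness --- does make defect-freeness nearly automatic, but it is fundamentally incompatible with the pointwise slope requirement (\ref{eq:barff}), and no tuning of the clustering can repair this. Take $\X=[0,1]$ with the standard metric and $f(x)=x$, so $\La_f(x,\X)=1$ for every $x$. On a connected space, any non-constant function with values in a $2\eta$-spaced set has a boundary between two level sets across which it jumps by at least $2\eta$ over arbitrarily small distances, so $\La_{\bar f}$ blows up there and (\ref{eq:barff}) fails; hence $\bar f$ must be constant, which violates $\ninf{f-\bar f}\le4\eta$ once $\eta<1/8$. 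A discrete version of the same obstruction (a chain $x_0,\dots,x_K$ with $\rho(x_j,x_k)=|j-k|$ and $f(x_k)=1.5k\eta$) shows this is not an artifact of connectedness: the constraint $\La_{\bar f}(x_k,\X)\le\La_f(x_k,\X)=1.5\eta<2\eta$ forces consecutive lattice labels to coincide, so $\bar f$ is again constant and the $\ell_\infty$ bound breaks. In both examples $f$ is already defect-free, so the lemma holds trivially with $\bar f=f$ --- but your construction is committed to outputting a lattice-valued function and therefore cannot succeed. Your own caveat that (\ref{eq:barff}) ``cannot be expected from an operation that globally replaces $f$'' is precisely the unresolved gap, and the deferred merge-chain bookkeeping does not close either: transitive merging of value-close points produces clusters of unbounded $f$-range, as the chain example shows.

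The paper's proof takes a different and, in hindsight, necessary route: it never discretizes the range. It takes an $\eta/\ell$-net of $M_f(\ell)$, classifies net points as ``smooth'' or ``rough'' according to whether their balls are entirely contained in $\Xi_f(\eta,\ell,1)$, deletes the interiors of the rough balls, and defines $\bar f$ as the \extname\ extension of $f$ from the complement. Property (\ref{eq:barff}) is then immediate from Theorem~\ref{thm:PMSE}, since \extname\ is pointwise slope-minimizing, while the $\ell_\infty$ bound and defect-freeness follow from triangle-inequality arguments using that every modified point lies within $\eta/\ell$ of a surviving point that either is a genuine defect of $f$ or has a large-jump witness. The lesson for salvaging your outline is that the repair must be local (confined to the defective regions) and must never increase any local slope; quantizing the range achieves neither.
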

\begin{proof}
We will prove the equivalent claim
that
$\bar f\in\G(\eta/2,\ell, \ci = 6)$
and
$\ninf{f-\bar{f}} \leq 2\eta$.
  We begin by constructing $\bar f$.
  Let $M_f(\ell)$ be as defined in (\ref{eq:Mf}) and $V$ be a $\defrad$-net of this set.
  Partition $V=V_0\cup V_1$, where $V_0$ is ``smooth,''
  \beq
  V_0 := \set{v\in V: B(v,\defrad)\nsubseteq\Xi_f(\eta,\ell,1)},
  \eeq
  and $V_1$ is ``rough,''
  \beq
  V_1 := \set{v\in V: B(v,\defrad)\subseteq\Xi_f(\eta,\ell,1)}.
  \eeq

Define
\beq
A_f := \bigcup_{v_1\in V_1} B(v_1,\defrad) \setminus \paren{ \bigcup_{v_0\in V_0} B(v_0,\defrad)\cup V}.
\eeq
In words, $A_f$ consists of the entirely defective (or ``rough'') balls without their center-points or their intersections with smooth balls.
Define $\bar{f}$ as the \extname\ extension of
$f$
from
$\X \setminus A_f$
to $\X$, as in Definition~\ref{def:PMSE}.
Having constructed the $\bar f$, we proceed to verify its properties.

\paragraph{Proof that (\ref{eq:barff}) holds.}
This is an immediate consequence of Theorem~\ref{thm:PMSE}.

\paragraph{Proof that $\ninf{f-\bar{f}} \leq 2\eta$.}

Since \extname\ is an extension, 
we need only establish
$\abs{f(x)-\bar{f}(x)} \leq 2\eta$
for $x \in A_f$.
For any such $x$,
the definition of $A_f$ implies the existence of some $v_1 \in V_1$ for which $\rho(x,v_1) \leq \eta / \ell$.
Since $\abs{f(x)-\bar{f}(x)} \leq \abs{f(v_1)-f(x)} +  \abs{\bar{f}(x)-f(v_1)}$, it is sufficient to bound each term separately by $\eta$.

To bound the first term, assume, for a contradiction, that $\Del{v_1}{x} > \eta$.
Then $\frac{
\Del{v_1}{x}
}{\rho(v_1,x)} > \frac{\eta}{\eta/\ell} = \ell$, contradicting the defectiveness of $v_1$.

To bound the second term, again assume for a contradiction
that
$\bar{f}(x)-f(v_1) > \eta$;
the case $
f(v_1)
-
\bar{f}(x)
> \eta$
is handled analogously.
Our assumption
implies
$\La_{\bar{f}}(x) \geq \frac{\bar{f}(x)-\bar{f}(v_1)}{\rho(x,v_1)} \geq\ell$.
By properties (i) and (v) of Corollary~\ref{cor:PMSE-prop},
there is an
$x' \in \X\setminus A_f$ for which $\La_{\bar{f}}(x) = \frac{\bar{f}(x')-\bar{f}(x)}{\rho(x',x)} \geq \ell$ and $\bar{f}(x') \geq \bar{f}(x)$.
Invoking (\ref{eq:fxyz}),
we have $\frac{f(x')-f(v_1)}{\rho(x',v_1)} \geq \ell$.
Additionally, we have $\bar{f}(x') - \bar{f}(v_1) > \bar{f}(x) - \bar{f}(v_1) > \eta$, again contradicting the defectiveness of $v_1$.

\paragraph{Proof that $\bar f\in\G(\eta / 2,\ell, 6)$.}
  A statement equivalent to $\bar f\in\G(\eta / 2,\ell, 6)$ is that
$\Xi_{\bar f}(\eta/2,\ell,6)=\emptyset$.
Let us define the sets
\beq
E_1 &:=& \bigcup_{v_0\in V_0} B(v,\defrad),\\
E_2 &:=& A_f \cup V_1,\\
E_3 &:=& \X\setminus M_f(\ell),
\eeq 
which are, by construction, a
(not necessarily disjoint) cover of $\X$. Hence, it suffices to show that $\Xi_{\bar f}(\eta/2,\ell,6)\cap E_i=\emptyset$ for $i\in[3]$.

Let $x\in E_3$. By Theorem~\ref{thm:PMSE},
$\La_{\bar{f}}(x,\X)
\leq \La_f(x,\X)$.
Since $x \notin M_f(\ell)$, we have that $\La_{\bar{f}}(x) < \ell$, implying that
$x$ is not $(\eta/2,\ell,\ci)$-{defect}
for any $\ci\ge1$ with respect to $\bar{f}$.

Let $x \in E_1$. Then there is a $v_0 \in V_0$ such that $\rho(x,v_0) \leq \eta / \ell$.
Being in $V_0$
implies that $v_0$ has some $\ell$-slope witness $v_0' \in E_1$
such that $\abs{f(v_0)-f(v_0')} > \eta$.
This implies by (\ref{eq:abc1/2}) that 
$\vmax{\abs{\bar{f}(x) - \bar{f}(v_0)}}
{\abs{\bar{f}(x)-\bar{f}(v_0')}} >
\abs{f(v_0)-f(v_0')} / 2 > \eta / 2$,
since
$f$ and $\bar f$ must agree on
$v_0$ and $v_0'$.
The triangle inequality yields a slope of at least $\ell/4$
witnessed by at least one of $\set{v_0,v_0'}$.

Let $x \in E_2$. Then there is a $v_1 \in V_1 \subseteq \Xi_f(\eta,\ell,1)$
for which $\rho(x,v_1) \leq \eta / \ell$. By  
Remark~\ref{rem:PMSE-argmax}, the maximal slope at $x$ 
is achieved at the two distinct points $u^*$ and $v^*$ by which it is determined.
Suppose $\vmax{\rho(x,u^*)}{\rho(x,v^*)} >
\eta/2\ell$. If $\La_f(x)\ge\ell$ and
one of $\set{u^*,v^*}$ --- say, $u^*$ ---
satisfies
the inequality then:
$$\abs{f(x)-f(u^*)} \geq \ell  \rho(x,u^*) >
\ell
\cdot\frac{\eta}{2\ell} = \frac{\eta}{2}.$$
This contradicts the second condition for an $(\eta/2,\ell,c)$-{defect}.
Otherwise, $\vmax{ \rho(x,u^*)}{ \rho(x,v^*) } 
\leq \eta/2\ell$.
Since $V$ is an $\eta/\ell$-net, it is not possible that both $u^*,v^* \in V$.
Therefore
(without loss of generality)
$u^* \in E_1
\cup
E_3$.
If $u^* \in E_3$,
it follows from property (ii) in Corollary~\ref{cor:PMSE-prop} 
that 
$$\La_{\bar{f}}(x,\X)
\leq
\La_{\bar{f}}(u^*,\X)
\leq
\La_f(u^*,\X)
< \ell,$$
which implies that $x \notin \Xi_{\bar{f}}(\eta/2,\ell,\ci)$ for any $\ci\ge1$.
If $u^* \in E_1$ then there is some $v_0 \in V_0$
for which $\rho(u^*,v_0) \leq \eta/\ell$.
Since $\La_{f}(v_0) > \ell$ and $v_0 \notin \Xi_f(\eta,\ell,1)$,
it must have some witness $v_0'$ such that $\abs{f(v_0)-f(v_0')} > \eta$
and the slope between them is at least $\ell$.
Similarly to previous arguments, 
$\vmax{\abs{\bar{f}(x) - \bar{f}(v_0)}}{ \abs{\bar{f}(x)-\bar{f}(v_0')} } 
>\abs{f(v_0)-f(v_0')} / 2 \geq \eta / 2$.
In either case, applying the triangle inequality yields a slope of at least $\ell/6$
witnessed by 
at least one of $\set{v_0,v_0'}$,
which shows that $x \notin  \Xi_{\bar{f}}(\eta/2,\ell,6)$.

\end{proof}

The culmination of this section is the following crucial uniform convergence result invoked in the course of proving Theorem~\ref{thm:distances}:
\begin{corollary}
  \label{cor:supMf}
  Let $f^\eta$ be the function constructed from $f$ as in Lemma~\ref{lem:repair},
when the latter is invoked with the parameter $\ell=L/t$,
and let
\beq
{\barLw_L}\supeta(\X,\rho,\mu)
:=\set{f^\eta:f\in \barLw_L(\X,\rho,\mu)}
\subseteq \barLw_L(\X,\rho,\mu)
.
\eeq
  Then,
  with probability at least $1-\delta$, we have
  \beq
\sup\set{
  \mu_n(M_f(L/t))
  :
  f\in
  {\barLw_L}\supeta(\X,\rho,\mu)
  }
&\le&
24t +
\frac12\sqrt{\frac{
(2L/\eta t)^{\ddim(\X)}
  }{n}}+
\frac12\sqrt{\frac2n\log\frac2\delta}
,
  \eeq
  where $\mu_n$ is the empirical measure
  induced by $\mu$.
\end{corollary}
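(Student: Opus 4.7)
The plan is to reduce the supremum over the uncountable class ${\barLw_L}\supeta(\X,\rho,\mu)$ to a deterministic estimate on $\mu$ plus a single TV--concentration inequality on a fixed, finite partition of $\X$. Concretely, since every $f = g^\eta \in {\barLw_L}\supeta$ was produced by Lemma~\ref{lem:repair} with parameter $\ell := L/t$, the lemma guarantees both $f \in \G(\eta,\ell,6)$ and, via (\ref{eq:barff}) together with $g \in \barLw_L$, that $f$ itself lies in $\barLw_L(\X,\rho,\mu)$. I would then invoke Lemma~\ref{lem:M_f(ell)} at $(\eta,\ell,c=6)$ to obtain a partition $\Pi$ of $\X$, depending on $\X,\eta,\ell$ but crucially not on $f$, of size $N \le (2L/\eta t)^{\ddim(\X)}$, together with the sandwich $M_f(\ell) \subseteq U_f \subseteq M_f(\ell/24)$ where $U_f$ is a union of cells of $\Pi$.

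The deterministic half of the bound comes from applying the weak-mean Markov inequality (\ref{eq:markov}) to $f \in \barLw_L$ at threshold $\ell/24$: this gives $\mu(U_f) \le \mu(M_f(\ell/24)) \le 24L/\ell = 24t$, uniformly in $f \in {\barLw_L}\supeta$. For the random half, since $U_f$ is a union of cells of the fixed partition $\Pi$, the elementary inequality $|\mu_n(U_f)-\mu(U_f)| \le \|\pi - \pi_n\|_1$ holds, where $\pi,\pi_n$ are the probability measures on $\Pi$ induced by $\mu,\mu_n$. Invoking the same TV concentration inequality (\ref{eq:TV-bk}) used in the proof of Theorem~\ref{thm:distances}, with probability at least $1-\delta$ one has $\|\pi - \pi_n\|_1 \le \sqrt{N/n} + \sqrt{(2/n)\log(2/\delta)}$. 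Chaining $\mu_n(M_f(\ell)) \le \mu_n(U_f) \le \mu(U_f) + \|\pi - \pi_n\|_1$ and substituting the size of $\Pi$ yields the claimed bound (up to the constants absorbed into the TV inequality).

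The main obstacle is precisely the uniformity requirement: a naive attempt to control $\mu_n(M_f(\ell))$ simultaneously over all $f$ would demand a Glivenko--Cantelli-type argument for a potentially very rich family of super-level sets. The entire payoff of the structural work in Section~\ref{sec:defects} is to avoid this: defect-freeness, which is forced a priori by the smoothing construction of Lemma~\ref{lem:repair}, allows every super-level set $M_f(\ell)$ to be approximated (from above) by a union of cells of a single, $f$-independent partition. Once this reduction is in place, the uniform convergence collapses to the low-complexity statement that $\mu_n$ approximates $\mu$ well on the finite $\sigma$-algebra generated by $\Pi$, which is exactly what (\ref{eq:TV-bk}) supplies.
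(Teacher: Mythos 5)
Your proposal is correct and follows essentially the same route as the paper: Lemma~\ref{lem:M_f(ell)} supplies the $f$-independent partition $\Pi$ and the sandwich $M_f(\ell)\subseteq U_f\subseteq M_f(\ell/24)$, the weak-mean Markov inequality (\ref{eq:markov}) at level $L/(24t)$ gives the deterministic term $24t$, and (\ref{eq:TV-bk}) on the fixed partition handles the uniform deviation over all unions of cells. The only cosmetic difference is that the paper passes through the variational characterization $\sup_{U\subseteq\Pi}\abs{\mu_n(U)-\mu(U)}=\frac12\sum_{B\in\Pi}\abs{\mu(B)-\mu_n(B)}$, which is where the factors of $\frac12$ in the stated bound come from; your cruder estimate $\abs{\mu_n(U_f)-\mu(U_f)}\le\norm{\pi-\pi_n}_1$ loses exactly that factor of two, as you acknowledge.
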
    
\begin{proof}
Let $\Pi$ be as in Lemma~\ref{lem:M_f(ell)}.
For each $f\in
{\barLw_L}\supeta(\X,\rho,\mu)
$,
let
$U_f$
be as defined in (\ref{eq:Uf}).
Then, invoking the inclusion in (\ref{eq:MUM}) and recalling that $\mu(M_f(L/t))\le t$ for all $f\in \barLw_L(\X,\rho,\mu)$ and
$t>0$ (and that Lemma~\ref{lem:repair} sets $c=6$),
\beq
\sup_f
  \mu_n(M_f(L/t))
  &\le& \sup_f\mu_n(U_f) \\
  &=& \sup_f\sqprn{\mu_n(U_f)-\mu(U_f)}+\mu(U_f)\\
  &\le& \sup_f\paren{\mu_n(U_f)-\mu(U_f)}+\sup_f \mu(U_f)\\
  &\le& \sup_{U\subseteq\Pi}\paren{\mu_n(U)-\mu(U)} +\sup_f \mu(M_f(L/4ct))\\
  &\le& \sup_{U\subseteq\Pi}\paren{\mu_n(U)-\mu(U)} +24t\\
  &=&
\frac12\sum_{B\in\Pi}|\mu(B)-\mu_n(B)|+24t,
\eeq
where the last step used the variational characterization of the total variation distance.
The latter is bounded as in (\ref{eq:TV-bk}),
completing the proof.

\end{proof}

\section{Learning algorithms: training}
\label{sec:learn}

We consider two learning problems --- classification and regression ---
in a unified 
{\em agnostic}
setting \citep{mohri-book2012}. 
In each case, the learner 
receives a labeled sample,
$(X_i,Y_i)_{i\in[n]}$,
where $X_i\in\X$ and $Y_i\in\set{0,1}$ for classification
or 
$Y_i\in[0,1]$ for regression.
The learner then selects a hypothesis
$f\in \barLw_{L}(\X,\rho,\mu)$,
where $L$ is fixed a priori.\footnote{
Assuming $L$ fixed and known incurs no loss of generality, as discussed
at the beginning of Section~\ref{sec:gen}.
} 
Finally, given a test point $x'\in\X$,
the learner's predicted label is either $f(x')\in[0,1]$
(regression) or $\Int{f(x')}\in\set{0,1}$ (classification);
this is elaborated in greater detail in Section~\ref{sec:risk-bounds}.
Computational considerations, as well as the learner's inherent uncertainty
regarding whether 
$f\in \barLw_{L}(\X,\rho,\mu)$
(see below),
will lead us to consider relaxed versions of the learning problem,
where the ``complexity budget'' will increase
from $L$
to $O(L)$ for regression
and $O(L \cdot \polylog n)$ for classification.

As described in Section~\ref{sec:def},
the sample is drawn from the joint measure $\joint$
over $\X\times[0,1]$ --- whose first marginal,
by definition, necessarily coincides with $\mu$ ---
and, once drawn, induces
the empirical measure $\joint_n$.
The {\em empirical} (respectively, {\em true}) risk
of $f:\X\to\R$ is the expected value of $|f(X)-Y|$
under $\joint$ (respectively, $\joint_n$);
these are denoted by
$R(f;\joint)$
and
$R(f;\joint_n)$.
The learner seeks to minimize 
$R(f;\joint)$ but can only directly access
$R(f;\joint_n)$;
hence, an {\em optimization} algorithm
will seek to minimize the latter, while
a {\em generalization} bound will provide
a high-confidence bound on the former.

Our learning problem presents a novel challenge,
not typically encountered in the classic 
supervised learning
setting.
Namely, ensuring that the learner's hypothesis belongs to
$\barLw_{L}(\X,\rho,\mu)$
(or
$\barLs_{L}(\X,\rho,\mu)$)
is non-trivial,
and is certainly not guaranteed ``by construction''.
Indeed, let us break down the learning process into
its basic stages.
The training stage, which may be called 
{\em smoothing}
or {\em denoising} (or yet {\em regularization}), 
involves
solving the following optimization problem:
Choose a hypothesis $f$ that stays within
the ``smoothness budget'' and achieves a low
$R(f;\joint_n)$. Algorithmically, this is done
by computing an $\hat f:\X_n\to[0,1]$, where
$\X_n=(X_i)_{i\in[n]}$ and $\hat f(X_i)$ is a ``smoothed''
version of the $Y_i$, achieving a desired average empirical slope
$\bLaw_{\hat f}(\mu_n,\X_n)$
(or  $\bLas_{\hat f}(\mu_n,\X_n)$).
The function $\hat f$ is then extended via (a variant of)
\extname\ from $\X_n$ to all of $\X$.
The novel challenge is to ensure that
$\bLaw_{\hat f}(\mu,\X)$
(respectively, $\bLas_{\hat f}(\mu,\X)$)
does not much exceed its empirical version.
We term this problem {\em adversarial extension}
and address it in Sections~\ref{sec:adv-ext} and \ref{sec:adv-ext-class}.

The results for regression are conceptually simpler
and are presented first; those for classification follow. Throughout this section, we assume
$\diam(\X)\le1$ and $d:=\ddim(\X)<\infty$.

\newcommand{\Lpr}{L}
\newcommand{\tlf}{\hat{f}}
\newcommand{\xn}{x_{[n]}}

\subsection{Regression}
\label{sec:learn-reg}

\begin{theorem}[Training and generalization 
for
regression, strong mean.]
\label{thm:strong-mean-learn}
Let
$\joint$ be some distribution on $\X\times[0,1]$,
and
$S_n=(X_i,Y_i)_{i\in[n]}\sim\joint^n$ be a set sampled i.i.d. from $\joint$. 
Denote
by
$\hat f \in \barLs_{\Lpr}(\X_n,\rho,\mu_n)$ the minimizer of $
R(\cdot;\joint_n)
$.
Then there is an efficient learning algorithm $\mathcal{A}$
that constructs a hypothesis $f = \mathcal{A}(S_n)$ such that such that for any 
given $L>0$, $0<\e,\delta<1$ and $c<1$:
\ben
\item[(a)]
With probability at least
$1-\exp\left( {-n (\e/8) ^ {d+1} + d\ln{(8/\e)}} \right) - 3\delta$,
\beq
R(f;\joint) 
\leq 
(1+c)R(\hat f;\joint_n) 
+
O \left(
\e L
+
\frac{C_\delta \sqrt{L}}
{n^{1/8d}}
\right)
+
\frac{C_\delta^{-d/2} \sqrt{2}} {n^{5/16}}
+
3\sqrt{\frac{\log(2/\delta)}{2n}}
,
\eeq
\item[(b)]
$f(x)$ can be evaluated at each $x \in \X$ in time $O(n^2)$ 
after a one-time ``smoothing'' computation of 
$\min \{ 2^{O(d)} (n/c^2) \log \Delta,O((n/c)^2 \log n)\}$ where $\Delta=\min_{x\neq x'\in\xn}\rho(x,x')$,
\een
where 
$C_\delta$ is a constant depending only on $\delta$. %
\end{theorem}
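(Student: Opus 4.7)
The plan is to decompose the theorem into three sub-problems handled by distinct pieces of the paper's machinery: (i) an optimization step that approximately minimizes the empirical risk on $\X_n$ subject to the strong-mean constraint $\bLas_{\hat f}(\mu_n,\X_n)\le L$; (ii) an adversarial extension step (Section~\ref{sec:adv-ext}) that lifts the sample hypothesis to a function $f$ on all of $\X$ whose average slope under $\mu$ is at most a constant factor larger than $L$; and (iii) a uniform deviation step that converts empirical risk into true risk via the empirical covering-number bound of Theorem~\ref{thm:emp-cov-num}.

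For step (i), I would encode the smoothing problem as a linear program in the variables $\hat f(X_i)\in[0,1]$ and auxiliary slope variables $s_i$, with constraints $|\hat f(X_i)-\hat f(X_j)|\le \rho(X_i,X_j)\,s_i$ and $\tfrac{1}{n}\sum_i s_i\le L$. A naive formulation has $\Theta(n^2)$ slope constraints; following the mixed packing--covering framework of \citet{KY-14}, one obtains a $(1+c)$-approximation to the ERM $\hat f$ in time $O((n/c)^2\log n)$. To achieve the alternative $2^{O(d)}(n/c^2)\log\Delta$ running time I would exploit the doubling geometry by building the $2^{O(d)}\min\{\log n,\log\Delta\}$-time hierarchy of nets from Section~\ref{sec:def} and retaining only a $2^{O(d)}n$-sized subset of ``local'' slope constraints between points at adjacent hierarchy levels; correctness follows from a standard argument that every violating pair has a near-net witness, so the reduced LP is feasible for the same $L$ up to a constant factor absorbed into $c$.

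For step (ii), I would invoke the strong-mean adversarial extension result with distortion tolerance calibrated to $\eps$: with probability at least $1-\delta$ over the sample, the \extname\ extension produces an $f:\X\to[0,1]$ with $\nmsc{\hat f-f}{1}{\mu_n}=O(\eps L)$ and $\bLas_f(\mu,\X)=O(L)$. In particular $f\in\barLs_{O(L)}(\X,\rho,\mu)\subseteq\barLw_{O(L)}(\X,\rho,\mu)$, placing $f$ in the class to which Theorem~\ref{thm:emp-cov-num} applies; the $\Lp1(\mu_n)$ distortion is absorbed directly into the $O(\eps L)$ summand of the risk bound via the triangle inequality $|R(f;\joint_n)-R(\hat f;\joint_n)|\le\nmsc{f-\hat f}{1}{\mu_n}$, while the extension's failure event contributes one of the $\delta$ summands.

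For step (iii), I would run a Dudley-type chaining argument on $\barLw_{O(L)}(\X,\rho,\mu)$ using the empirical $\Lp2(\mu_n)$ bound $(L/\eps_0^3)^d\log(1/\eps_0)$ with $\eps_0=C_\delta\sqrt{L}\,n^{-1/(8d)}$ from Theorem~\ref{thm:emp-cov-num}, converting $R(f;\joint_n)$ into $R(f;\joint)$ up to an $O(\eps_0)$ gap and a $1/n^{5/16}$ residual coming from squaring $\eps_0$-scale terms in the chaining sum; the $\exp(-n(\eps/8)^{d+1}+d\ln(8/\eps))$ tail arises from a union bound over an $\eps$-net of $\X$ used to discretize $f$ at the finest scale, and McDiarmid's bounded-differences inequality supplies the $3\sqrt{\log(2/\delta)/(2n)}$ term. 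Summing the empirical risk of $f$ (bounded by $(1+c)R(\hat f;\joint_n)+O(\eps L)$ via steps (i)--(ii)), the $O(\eps_0)$ chaining gap, and the concentration residuals yields the stated bound. The main obstacle, in my view, is the bookkeeping across the three randomness sources --- the sample draw underlying the covering bound, the same sample draw controlling the extension blowup, and the union bound over the $\eps$-net --- ensuring that the good events are simultaneously compatible (so that $f$ really lies in $\barLw_{O(L)}(\X,\rho,\mu)$ on the realization where Theorem~\ref{thm:emp-cov-num} is invoked) and that the parameter scales $\eps,\eps_0,r,\eta$ match up so that no term is lost in translation; the individual ingredients are otherwise plug-and-play from the preceding sections.
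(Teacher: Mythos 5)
Your decomposition --- packing--covering LP smoothing (Lemma~\ref{lem:smth-reg}), \extname-based adversarial extension (Lemma~\ref{lem:adv-ext-strong}), and a generalization step via the empirical covering numbers --- is exactly the paper's proof, and all the key ingredients you cite are the ones actually used. One small misattribution: the $\exp\left(-n(\e/8)^{d+1}+d\ln(8/\e)\right)$ tail does not come from a chaining/$\e$-net discretization in the generalization step (the paper uses the single-scale Rademacher--covering bound (\ref{eq:bartlett}), not Dudley chaining); it is the failure probability of the adversarial extension's guarantee $\bLas_f(\mu,\X)\le O(1)\bLas_{\tilde f}(\mu_n,\X_n)$, arising from the concentration of the empirical measure on the Voronoi cells of an $\e/4$-net (Lemma~\ref{lem:cell-rec}), while the entire $3\delta$ is charged to (\ref{eq:regbounds}).
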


\begin{proof}
The smoothing algorithm described in
Lemma~\ref{lem:smth-reg} constructs
an approximate minimizer 
$\tilde f\in \barLs_{O(1)L}(\X_n,\rho,\mu_n)$
of 
$R(\cdot;\joint_n)$
and the 
``adversarial extension''
algorithm in Lemma~\ref{lem:adv-ext-strong}
provides an extension
$f$
of $\tilde f$
from $\X_n$ to $\X$
that,
with high probability, belongs to $\barLs_{O(1)L}(\X,\rho,\mu)$
and
increases 
the empirical risk by at most an additive $O(\e L)$.
The bound in (a) is then a direct
application of (\ref{eq:regbounds}).

\end{proof}

\begin{lemma}[Smoothing for regression, strong mean]
\label{lem:smth-reg}
Let $(\X,\rho)$ be a metric 
space 
with $\diam(\X)\le 1$ and $\ddim(\X)<\infty$.
Suppose that $(x,y)=(\xn,y_{[n]})
\in(\X^n,\R^n)$
and $L>0$
are given, and denote
\beqn
\label{eq:dist-reg}
A(f;x,y,\Lpr) 
&:=&
\{
\norm{f-y}_{\Lp{1}(\mu_n)} 
:
\: f \in \barLs_{\Lpr}(\xn,\rho,\mu_n) 
\},
\eeqn
where $\mu_n$ is the counting measure on $\xn$.

Then a $(1+c)$-approximate minimizer 
$\tlf \in \barLs_{\Lpr}(\xn,\rho,\mu_n)$
of
$A(\cdot;x,y,\Lpr)$
can be computed in time
\beq
\min \{ 2^{O(\ddim(\X))} (n/c^2) \log \Delta~,~O((n/c)^2 \log n)\}.
\eeq
\end{lemma}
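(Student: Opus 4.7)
The plan is to cast the minimization as a linear program and solve it approximately via the mixed packing--covering solver of \citet{KY-14}. Introduce variables $f_i := f(x_i) \in [0,1]$, non-negative auxiliaries $u_i \ge 0$ encoding $|f_i - y_i|$, and $s_i \ge 0$ encoding the local slopes $\La_f(x_i,\xn)$. The program minimizes $\sum_i u_i$ subject to $-u_i \le f_i - y_i \le u_i$ for each $i$, the pairwise slope constraints $|f_i - f_j| \le s_i \rho(x_i, x_j)$ for all $i \ne j$, and the strong-mean budget $\sum_i s_i \le nL$. After splitting the free variables $f_i$ into non-negative parts and renormalizing coefficients, this fits the mixed packing--covering template of \citet{KY-14}, whose nearly-linear-time solver returns, in $\tilde O(N/c^2)$ time (with $N$ the number of constraint-matrix non-zeros), a solution whose objective is within a $(1+c)$-factor of optimum and whose constraints are violated by at most the same factor; strict feasibility of $\tilde f \in \barLs_L(\xn,\rho,\mu_n)$ is then recovered by pre-tightening the budget by $1 - O(c)$. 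With all $\binom{n}{2}$ slope constraints retained, $N = O(n^2)$, yielding the $O((n/c)^2 \log n)$ bound.

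To obtain the sharper $2^{O(\ddim(\X))} (n/c^2) \log \Delta$ runtime, I would reduce the pairwise slope constraints to $2^{O(\ddim(\X))} n \log \Delta$ via a hierarchical net on $\xn$. First, compute a net hierarchy $H_{1} \subseteq H_{t^{-1}} \subseteq \cdots \subseteq H_{t^{-m}} = \xn$ for a small constant $t \in (0,1)$ in time $2^{O(\ddim(\X))} n \log \Delta$, using the construction cited in the paragraph after~(\ref{eq:ddim-pack}); here $m = O(\log(1/\Delta))$. For each $x_i$, retain only the slope constraints to its near neighbors at each scale, namely to those net points in $H_{t^{-k}}$ lying within $O(t^{-k})$ of $x_i$. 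The packing property of doubling spaces limits these to $2^{O(\ddim(\X))}$ per point per scale, producing a reduced LP of the advertised size, on which the KY solver is then run.

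The main obstacle is verifying that this reduction is faithful up to a bounded factor, in the sense that every feasible solution $(f,s)$ of the reduced LP satisfies $\La_f(x_i,\xn) = O(1) \cdot \max_\ell s_{z_\ell}$ along some appropriate chain, so that the original strong-mean budget $\frac{1}{n}\sum_i \La_f(x_i,\xn)$ is exceeded by at most a $2^{O(\ddim(\X))}$ factor --- absorbed into constants by pre-scaling $L$ before invoking the solver. I plan to establish this via a telescoping \emph{net-chain} argument: given arbitrary $x_j$, construct a sequence $x_i = z_0, z_1, \ldots, z_k = x_j$ whose consecutive pairs are retained hierarchy-neighbors at geometrically refining scales with $\sum_\ell \rho(z_\ell, z_{\ell+1}) = O(\rho(x_i, x_j))$, and sum the retained slope constraints along this chain via the triangle inequality. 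Summing the resulting per-point slope bounds over $i \in [n]$ and invoking the packing property to control how often each net point serves as an intermediate point produces the desired control of $\sum_i \La_f(x_i,\xn)$ in terms of $\sum_i s_i$. The converse direction is automatic, since the reduced LP is a relaxation of the original; composing faithfulness with KY's $(1+c)$-guarantee then delivers a strictly feasible $\tilde f \in \barLs_L(\xn,\rho,\mu_n)$ whose empirical $\Lp1$ error is within $(1+c)$ of the optimum.
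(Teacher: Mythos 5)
Your first runtime bound is fine and matches the paper: the quadratic LP with per-point slope variables, the reduction to non-negative form for the mixed packing--covering solver of Young, and the resulting $O((n/c)^2\log n)$ time are essentially the paper's argument.

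The sharper $2^{O(\ddim(\X))}(n/c^2)\log\Delta$ bound, however, has a genuine gap in the faithfulness step, and the telescoping net-chain argument cannot close it. Chaining retained constraints along $x_i=z_0,\ldots,z_k=x_j$ gives $|f(x_i)-f(x_j)|\le\sum_\ell s_{z_\ell}\rho(z_\ell,z_{\ell+1})=O(\rho(x_i,x_j))\max_\ell s_{z_\ell}$, so the local slope at $x_i$ is charged to the budget variables of \emph{intermediate} net points rather than to $s_i$. Summing over $i$ is then not controlled by $\sum_i s_i$: a single coarse-scale net point lies on the chains of $\Omega(n^2)$ pairs, and the packing property bounds how many net points are near a given point at a given scale, not how many points route through a given net point. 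Concretely, take two clusters $A,B$ of $n/2$ points each, intra-cluster diameter $\eps$ and inter-cluster distance $1$, with $f\equiv0$ on $A$ and $f\equiv1$ on $B$. The reduced LP is satisfied by setting $s=1$ only at the two coarse-scale representatives $a^*\in A$, $b^*\in B$ and $s=0$ elsewhere, so $\frac1n\sum_i s_i=2/n$, while the true strong mean is $\frac1n\sum_i\La_f(x_i,\xn)=\Theta(1)$ --- a gap of order $n$, not $2^{O(\ddim)}$. The paper avoids chaining entirely: for each hierarchy point $x_j\in H_k$ it introduces bracketing variables $z'_{j,k},z''_{j,k}$ with $z''_{j,k}\le z_p\le z'_{j,k}$ for all $p$ in the neighborhood $N(x_j,k)$, and imposes $\max\{|z_i-z'_{j,k}|,|z_i-z''_{j,k}|\}\le L_i\,\rho(x_i,x_j)$ for the $2^{O(\ddim)}$ representatives $x_j\in C(x_i,k)$ at each scale. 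Since every $x'$ with $2\cdot2^k\le\rho(x_i,x')<4\cdot2^k$ lies in the neighborhood of some such representative at comparable distance, this controls the slope from $x_i$ to \emph{every} other point by a constant times $x_i$'s own variable $L_i$, and $\sum_i\La_f(x_i,\xn)=O(1)\sum_iL_i$ follows. In the two-cluster example this correctly forces $L_u=\Omega(1)$ for every point. You would need to replace your chain argument with this (or an equivalent per-point bracketing) device for the sparsified LP to certify membership in $\barLs_{O(1)L}(\xn,\rho,\mu_n)$.
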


\begin{proof}
We cast the optimization problem as a linear program
over the variables $L_i, w_i, z_i$:

\beq
    \begin{array}{lll}
    \textrm{Minimize}   & W = \sum_{i\in[n]} w_i  	&    \\
    \textrm{subject to} & \sum_{i\in[n]} L_i \le \Lpr  	&    \\
			& w_i \ge |z_i-y_i|		& \forall i \in [n]	\\
			& |z_i-z_j|    \le  L_i  \rho(x_i,x_j) & \forall i,j \in [n] \\
                        & 0 \le w_i,z_i \le 1    	& \forall i \in [n]    .
    \end{array}
\eeq

A linear program in $O(n)$ variables 
and constraints
can be solved in time 
$\tilde{O}(n^\omega)$  \citep{CLS-19},
where $\omega$ is the best exponent for matrix inversion,  
currently $\omega \approx 2.37$.

\paragraph{First runtime improvement.}
To improve on the runtime, we will utilize the packing-covering
framework of \citet{KY-14}. 
For a constraint matrix of at most $m$ rows and columns with 
all non-negative entries and at most $\zeta$ non-zero entries, 
the algorithm computes in time
$O((m/c^2) \log \zeta + \zeta)$
a $(1+c)$-approximate solution satisfying all constraints.
A difficulty in utilizing this framework is that our constraint
matrix has negative entries; in particular, each constraint
of the form
\beq
|z_i-z_j|    &\le&  L_i  \cdot \rho(x_i,x_j)
\eeq
reduces to solving two constraints of the form
\beq
z_i-z_j    &\le&  L_i  \cdot \rho(x_i,x_j) \\
z_j-z_i    &\le&  L_i  \cdot \rho(x_i,x_j).
\eeq
To address this, we introduce dummy variables $\tilde{z}_i$
satisfying $z_i + \tilde{z}_i = 1$. 
Then the above constraints become:
\beq
L_i \cdot  \rho(x_i,x_j) + \tilde{z}_i + z_j &\ge& 1 \\
L_i \cdot \rho(x_i,x_j) + z_i + \tilde{z}_j &\ge& 1 .
\eeq
Similarly, the constraint
\beq
w_i \ge |z_i-y_i|
\eeq
is replaced by two constraints
\beq
w_i+z_i &\ge& y_i \\
w_i+\tilde{z}_i &\ge& 1 - y_i
.
\eeq

For the runtime, we have that both terms
$m,\zeta$ are bounded by $O(n^2)$,
for a total runtime of $O(n^2 \log n)$.

\paragraph{Second runtime improvement.}
The main obstacle to improving the above runtime lies in the 
quadratic number of constraints necessary to compute the average
slope. Here we show that we can reduce these to only
$2^{O(\ddim)} n \log \Delta$ constraints, 
each with a constant number of variables, 
and so the linear program of \citet{KY-14} will run in time 
$2^{O(\ddim)} \tilde{O}(n \log \Delta)$.
However, this comes at a cost of increasing the average slope by a constant factor.

We first extract from $\xn=(x_1,\ldots,x_n)$ a point hierarchy 
$\{H_{2^{-k}}\}_{k=0}^{\lceil \log{\Delta}\rceil}$.
Let $P(x,k)$ be the nearest neighbor of 
$x \in \xn$ in level $H_{2^{-k}}$, and for each point
$x' \in \xn$,
let neighborhood 
$N(x',k)$ include all points $x$ for which
$P(x,k) = x'$.
(Of course, $N(x',q)$ can be non-empty only if $x' \in H_{2^{-q}}$.)
Now let representative set $C(x,k)$ include all net points in $H_k$ 
satisfying $2 \cdot 2^k \le \rho(x,y) < 4 \cdot 2^k$.

Instead of computing the mean slope averaged over all points,
we will record for each
hierarchical point $x_j \in H_k$ the maximum and minimum labels
of points in its neighborhood
($z'_{j,k},z''_{j,k}$, respectively), 
and for each point $x_i \in S$, compare its label to the 
maximum and minimum among the neighborhoods of the points of 
representative set $C(x,k)$ for all $k$.
For any point pair $x,x' \in S$, 
the triangle inequality implies that for level $k$ satisfying 
$2 \cdot 2^k \le \rho(x,x') < 4 \cdot 2^k$
we have
$2^k \le \rho(x,C(x',k))< 5 \cdot 2^k$,
and so the average slope
is preserved up to constant factors.

\beq
    \begin{array}{lll}
    \textrm{Minimize}   & W = \sum_{i\in[n]} w_i  		&    \\
    \textrm{subject to} & \frac{1}{n}\sum_{i\in[n]} L_i \le \Lpr  		&    \\
			& w_i \ge |z_i-y_i|		& \forall i \in [n]	\\
			& \max \{ |z_i- z'_{j,k}|, |z_i- z''_{j,k}|\}   \le  L_i \cdot \rho(x_i,x_j) 
								& \forall \in [n],k \in [\lceil \log{\Delta}\rceil], x_j \in C(x_i,k)	 \\
			& z''_{i,k} \le z_j \le z'_{i,k}	& \forall i \in [n],k \in [\lceil \log{\Delta}\rceil], x_j \in N(x_i,k)	\\
                        & 0 \le z_i, z'_i, z''_i \le 1		& \forall i \in [n]    .
    \end{array}
\eeq

By the packing property (\ref{eq:ddim-pack}),
each point of $S$ can be found in at most
$2^{O(\ddim)}$ neighborhoods of each level, 
so that the sum of sizes all all neighborhoods is 
$2^{O(\ddim)} n \log \Delta$.
Similarly, 
$|C(x)| = 2^{O(\ddim)} \log \Delta$,
and so the sum of sizes of all representative sets is 
$2^{O(\ddim)} n \log \Delta$.
It follows that the program has 
$2^{O(\ddim)} n \log \Delta$ 
constraints, each with only a constant number of non-zero variables.
As before, the program can be adapted to the framework of 
\citet{KY-14} by separating the $\max$ term into two separate 
constraints, and introducing dummy variables 
$\tilde{z}_i,\tilde{z}'_i,\tilde{z}''_i$
respectively satisfying 
$z_i + \tilde{z}_i = 1$,
$z'_i + \tilde{z}'_i = 1$ and 
$z''_i + \tilde{z}''_i = 1$.
The claimed runtime 
follows.
\end{proof}

\paragraph{Extension to the weak mean.}
In light of Corollary~\ref{cor:s<=wlogn},
relaxing the constraint 
in (\ref{eq:dist-reg})
from
$f \in \barLs_{\Lpr}(\xn,\rho,\mu_n)$
to
$f \in \barLw_{\Lpr}(\xn,\rho,\mu_n)$
will yield an improvement in the objective function
that can also be achieved
via the relaxation
$f \in \barLs_{2\Lpr{\log n}}(\xn,\rho,\mu_n)$,
and hence we forgo designing
a specialized algorithm for this case.

\subsection{Classification}
\label{sec:learn-class}

We show below that
the sample smoothing problem for classification under average slope constraints in the strong-mean sense admits an algorithmic solution, but this solution reduces to solving an NP-hard problem.
(This does not necessarily imply however that the smoothing problem in the strong-mean sense is NP-hard.)
Fortunately, we are able to produce
an efficient bi-criteria approximation algorithm
for the sample smoothing problem under average slope constraints in the 
{\em weak}-mean sense.
Given our current state of knowledge, 
the weak mean provides us an unexpected computational advantage
over the strong mean, 
in addition to its being a more refined indicator of average smoothness.

\paragraph{Smoothing under the strong mean.}

Let
$\joint$ be some distribution on $\X\times\set{0,1}$,
and
$S=(X_i,Y_i)_{i\in[n]}\sim\joint^n$ be a set sampled i.i.d. from $\joint$. 
At constant confidence level $\delta$,
the generalization bound
(\ref{eq:class-gen})
implies that
any $f:\X\to[0,1]$ with
$\bLas_f(\mu,\X)\le L$
that makes $k=
\sum_{i=1}^n \pred{f(X_i)\neq Y_i}
$ or fewer mistakes on the sample
will achieve,
with high probability,
a generalization error
\beqn
\label{eq:class-k-bd}
\P_{(X,Y)\sim\joint}( 
\Int{f(X)}
\neq Y )
\le
\frac{k}{n}
+G(L,n)
=:Q(f,L),
\eeqn
where $G(\cdot,\cdot)$
is the bound in the right-hand side of
(\ref{eq:class-gen}).

We wish to find a hypothesis approximately minimizing the bound $Q(\cdot,\cdot)$
in (\ref{eq:class-k-bd}).
An intuitive approach might involve solving the following problem, 
which we call the Minimum Removal Average Slope Problem:
Given an average slope target value $L$,
remove the smallest number points from $S$ so that the resulting point set
attains average slope at most $L$.
Clearly, an algorithm solving or approximating  
the Minimum Removal Average Slope Problem can be leveraged to find a 
minimizer for (\ref{eq:class-k-bd}). 
However, we can show that such an approach is algorithmically infeasible:

\begin{claim}
The Minimum Removal Average Slope Problem is NP-hard.
Assuming the Exponential Time Hypothesis (ETH),
it is hard to approximation within a factor $n^{1/ \log^r \log n}$
for some universal constant $r$.
\end{claim}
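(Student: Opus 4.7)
I first reduce the problem to a cleaner combinatorial form. For binary labels $y_i \in \{0,1\}$, the local slope at a surviving sample point $x_i$ simplifies to $\La_f(x_i,\X_n) = 1/\rho(x_i, N_i)$, where $N_i$ denotes the nearest oppositely-labeled point in the surviving sample. Thus the Minimum Removal Average Slope Problem becomes: delete the smallest subset $R \subseteq [n]$ so that $\frac{1}{n-|R|}\sum_{i \notin R} 1/\rho(x_i, N_i^R) \le L$, where $N_i^R$ is computed in the remaining point set. This casts the problem as a form of covering against ``bad inter-class proximity'' pairs, with the wrinkle that the nearest opposite neighbor can shift as points are removed.

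For plain NP-hardness I would reduce from Minimum Vertex Cover. Given $G = (V,E)$, place one point $p_v$ labeled $0$ for each vertex, and one point $q_e$ labeled $1$ for each edge $e = (u,v)$, putting $\rho(q_e, p_u) = \rho(q_e, p_v) = \varepsilon$ and all other distances $\Theta(1)$, with the metric completed by shortest paths. Choose $L$ so that even a single surviving edge-gadget $(q_e, p_u, p_v)$ drives the averaged slope above $L$. Then the only way to satisfy the budget is to ``kill'' every edge: either remove $q_e$, or remove an endpoint $p_u$ or $p_v$. A standard cost-calibration (pricing $q_e$-removals sufficiently high relative to $p_v$-removals) forces any optimal removal set to correspond to a minimum vertex cover of $G$, yielding the NP-hardness.

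For the ETH-based quasi-polynomial inapproximability within $n^{1/\log^r \log n}$, I would reduce gap-preservingly from a problem for which such inapproximability is known under ETH --- for instance, Min-Rep / Label-Cover, Densest-$k$-Subgraph, or Minimum-$k$-Union (via Manurangsi-type arguments built on top of Dinur--Steurer-style PCP composition). The edge-gadget of the NP-hardness reduction is replaced by a hierarchical gadget of depth $O(\log \log n)$ whose slope contributions under partial removal exactly mirror the super-constituent/satisfaction structure of the source instance, so that a $\gamma$-approximation for removal translates to a $\gamma$-approximation for the source problem, transporting the $n^{1/\log^r \log n}$ lower bound.

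\textbf{Main obstacle.} The chief technical difficulty is that the objective $\sum_i 1/\rho(x_i, N_i^R)$ is \emph{non-local}: deleting a point can lengthen its former neighbor's distance to the opposite class, lowering that neighbor's slope, so gadgets interact through changes in nearest neighbors. Any valid gadget must therefore (i) be ``self-contained'' in the sense that intra-gadget distances are strictly dominated by inter-gadget distances by a controllable margin, and (ii) exhibit a monotone and predictable response to removals, so that the per-gadget contribution to the averaged slope depends only on that gadget's own removal pattern. Engineering such decoupled gadgets while keeping both the instance size and the hardness gap polynomially (resp.\ quasi-polynomially) faithful to the source problem is the delicate heart of the argument.
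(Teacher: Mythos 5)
Your opening reduction (binary labels give $\La_y(x_i,\X_n)=1/\rho(x_i,N_i)$ with $N_i$ the nearest surviving opposite-label point) is correct, and you correctly guess the right source of the ETH-hardness (Minimum $k$-Union / Densest $k$-Subgraph, which is exactly what the paper uses). But the Vertex Cover reduction you propose for plain NP-hardness does not work. With $\rho(q_e,p_u)=\rho(q_e,p_v)=\varepsilon$, removing a \emph{single} endpoint $p_u$ does not ``kill'' the edge gadget: the point $q_e$ is still at distance $\varepsilon$ from the surviving $p_v$, so both $q_e$ and $p_v$ retain local slope $1/\varepsilon$ and the average stays above any threshold calibrated to forbid $1/\varepsilon$ terms. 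The condition for neutralizing the gadget is ``remove $q_e$, \emph{or} remove both $p_u$ \emph{and} $p_v$'' --- a conjunction over the endpoints, not the disjunction that encodes Vertex Cover. No pricing of $q_e$-removals fixes this; once $q_e$-removals are expensive, the cheapest feasible solution is simply to delete every non-isolated vertex point, which carries no information about the graph. (If instead you calibrate $L$ to tolerate some $1/\varepsilon$ contributions, the count of surviving high-slope points depends on vertex degrees and on which incident edge points survive, and you run straight into the non-locality you flag as the ``main obstacle'' without resolving it.)

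The paper avoids all of this with a single reduction from Minimum $k$-Union that yields NP-hardness and the $n^{1/\log^r\log n}$ ETH-inapproximability simultaneously, with no gap amplification and no hierarchical gadget. One builds a bipartite instance with a point per universe element and a heavy point (realized by $m=|S_e|+1$ colocated copies) per set, using only two distance scales so that a set-point's slope is determined solely by whether any of its own elements survive; the threshold is chosen so that the only way to meet it is to fully isolate at least $k$ set-points, which costs exactly the size of the union of the corresponding $k$ sets. Because the removal count \emph{equals} the $k$-union size, the reduction is approximation-preserving for free, and the known ETH-hardness of Minimum $k$-Union (inherited from Densest $k$-Subgraph) transfers directly. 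Your proposed ``hierarchical gadgets of depth $O(\log\log n)$'' are both unsubstantiated and unnecessary; the missing idea is that the target problem already has the set-cover-like AND-structure of $k$-Union built in (isolating a set-point requires deleting \emph{all} of its nearby opposite-label points), so one should reduce from a problem with that structure rather than from Vertex Cover.
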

\begin{proof}
The hardness follows via a reduction from the Minimum $k$-Union Problem.
In this problem we are given a collection $C$ of $n$ sets and a parameter $k$, 
and must find a subset $C' \subset C$ of size $|C'|=k$
so that the union of all sets in $C'$ is minimized.
The Minimum $k$-Union Problem is known to be NP-hard \citep{CDKKR-16}, 
and under the ETH, it is hard to approximate the minimum union within a factor of
$n^{1/\log^{r'}\log n}$ for some universal constant $r'$. 
(The hardness of approximation follows directly from the Densest $k$-Subgraph Problem, which 
can be viewed as a special case of Minimum $k$-Union Problem \citep{M-17, C-20}.)

The reduction is as follows: Given an instance $(C,k)$ of Minimum $k$-Union, 
we create an instance of Minimum Removal Average Slope.
Create bipartite point set $S = S_e \cup S_s$ thus:
Set $S_e \in S$ has a point corresponding to each element 
in the element-universe of $C$.
Set $S_s \in S$ has a point of weight $m=|S_e|+1$
corresponding to each set in $C$.
(A point can be assigned weight $m$ by placing $m$ copies of the same point in $S_e$.)
For each point pair $s \in S_s, e \in S_e$
we set $\rho(s,e)$ equal to 2 if $e \in s$, and 1 otherwise.
Now let the target average slope be 
$\frac{2|S|-km}{|S|}$.
Clearly, this can only be attained by deleting the minimum number
of points in $S_e$ so that at least $k$ points of $S_s$
are not within distance 1 of any point of $S_e$.
This is equivalent to finding $k$ sets of $C$ of minimum union.
The reduction preserves hardness-of-approximation as well.
\end{proof}

One attempt around the hardness result would be to
mimic the approach taken for regression:
Identify a target error term $\eps \in [\frac{1}{n},1]$
(via binary search),
and remove from $S$ the ``worst'' $2\eps n$ points in order to minimize
the 
{\em maximum}
slope of the remaining points.
This in turn may be approximated using the algorithm of
\citet{DBLP:journals/tit/GottliebKK14+colt}, 
which runs in time 
$2^{\ddim} n \log n + \ddim^{O(\ddim)}n$.
Such an approach would yield a classifier achieving
a value of $Q(\cdot,\cdot)$
within a factor of $(1/\eps)^{O(\ddim)}$ of the optimal one.
A much better approximation 
factor
of $2^{O(d)}\log(n)$
is feasible, however, as we shall
see below.

\begin{theorem}[Training and generalization 
for
classification, weak mean]
\label{thm:weak-ave-lip-learn}
Let
$\joint$ be some distribution on $\X\times\{0,1\}$,
and
$S_n=(X_i,Y_i)_{i\in[n]}\sim\joint^n$ be a set sampled i.i.d. from $\joint$. 
Denote
by
$\hat f \in \barLw_{\Lpr}(\X_n,\rho,\mu_n)$ the minimizer of $
R(\cdot;\joint_n)
$.
Then there is an efficient learning algorithm $\mathcal{A}$, which constructs a classifier $f = \mathcal{A}(S_n)$ such that such that for any 
given $L>0$ and $0<\delta<1$:
\ben
\item[(a)]
With probability at least
$1
-2^{O(\ddim)} \log^3(n)/n
-3\delta$,
\beq
R(f;\joint) 
\leq 
2^{O(d)}\log(n)R(\hat f;\joint_n) 
+
\frac{C_\delta \sqrt{2^{O(d)}\log^3(n)L}}
{n^{1/8d}}
+
\frac{C_\delta^{-d/2} \sqrt{2}} {n^{5/16}}
+
3\sqrt{\frac{\log(2/\delta)}{2n}}
,
\eeq
\item[(b)]
$f(x)$ can be evaluated at each $x \in \X$ in time $O(n^2)$ 
after a one-time ``smoothing'' computation of 
$O(n^2) + 2^{O(d)} n \log^2 n$.
\een
where 
$C_\delta$ is a constant depending only on $\delta$. %
\end{theorem}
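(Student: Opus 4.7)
The plan follows the same three-stage template as in the regression case (Theorem~\ref{thm:strong-mean-learn}): (i) solve the empirical smoothing problem on the sample $\X_n$, producing a classifier $\tilde f$ with controlled empirical weak-mean slope and low empirical risk; (ii) extend $\tilde f$ adversarially from $\X_n$ to $\X$; (iii) feed the resulting hypothesis $f$ through the classification generalization bound (\ref{eq:class-gen}).

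For stage (i), the observation distinguishing the weak mean from the (NP-hard) strong mean is that, for $\{0,1\}$-valued labels, $\La_f(x_i,\X_n)\ge t$ holds precisely when $x_i$ has a sample point of opposite label within distance $1/t$. Hence $\bLaw_{f}(\mu_n,\X_n)\le L$ reduces to demanding, for every $t>0$, that at most $Ln/t$ sample points be in such conflict at scale $1/t$. This ``one scale at a time'' structure invites a dyadic, hierarchical approach: build a net hierarchy on $\X_n$ and, for each of the $O(\log n)$ dyadic scales, greedily re-label the smallest set of sample points needed to satisfy that scale's quota. By the doubling property, per-scale corrections interact with at most $2^{O(d)}$ neighborhoods, so pooling across scales costs at most a $2^{O(d)}\log(n)$ multiplicative factor in both quantities of interest: the weak-mean slope and the empirical misclassification count. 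This yields a bi-criteria guarantee $\bLaw_{\tilde f}(\mu_n,\X_n)\le 2^{O(d)}\log(n)\,L$ and $R(\tilde f;\joint_n)\le 2^{O(d)}\log(n)\,R(\hat f;\joint_n)$ in time $O(n^2)+2^{O(d)} n\log^2 n$, matching the smoothing cost in (b).

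For stage (ii), invoke the classification-flavored adversarial extension of Section~\ref{sec:adv-ext-class}, which produces $f:\X\to\{0,1\}$ agreeing with $\tilde f$ on $\X_n$---so the empirical risk is preserved with no distortion---while inflating the weak-mean slope by at most a factor $2^{O(d)}\polylog(n)$ when passing from $\mu_n$ to $\mu$, on an event of probability at least $1-2^{O(d)}\log^3(n)/n$. On this event, $f\in\barLw_{L''}(\X,\rho,\mu)$ with $L''=2^{O(d)}\polylog(n)\,L$. For stage (iii), apply (\ref{eq:class-gen}) to $f$: the empirical covering-number bound from Theorem~\ref{thm:emp-cov-num} contributes a $\sqrt{L''}/n^{1/8d}$ deviation term, which upon substituting $L''$ yields the advertised $C_\delta\sqrt{2^{O(d)}\log^3(n)\,L}/n^{1/8d}$ summand in (a). Chaining the multiplicative blow-ups from stages (i) and (ii) through this uniform-deviation bound, and taking a union bound over the failure events of the three stages (including the event of Theorem~\ref{thm:distances}), produces (a); the runtime in (b) is the smoothing cost from stage (i) plus the $O(n^2)$ per-query evaluation cost inherited from the extension.

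The main obstacle is stage (i): devising a polynomial-time bi-criteria smoothing algorithm for the weak mean in the face of the NP-hardness of the strong-mean analogue. This is precisely where the weak mean's scale-decoupled structure---its value being governed by a single worst scale rather than by a sum over all scales---becomes decisive, allowing a doubling-dimension-aware net hierarchy and independent per-scale corrections to replace a single global combinatorial optimization. The other two stages are essentially plug-and-play once the corresponding ingredients from Sections~\ref{sec:cov-num} and \ref{sec:adv-ext-class} are available.
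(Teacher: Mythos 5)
Your proposal matches the paper's proof: the same three-stage pipeline of bi-criteria weak-mean smoothing on the sample (Lemma~\ref{lem:bicriteria}), adversarial extension for classification (Lemma~\ref{lem:adv-ext-class}), and the generalization bound (\ref{eq:class-gen}), with the same runtime accounting. One small quantitative point: the paper's Lemma~\ref{lem:bicriteria} is a $(2^{O(d)}\log n,\,O(1))$-bicriteria approximation, so the $2^{O(d)}\log n$ blow-up hits only the number of relabelled points (hence the empirical risk), while the slope budget is inflated by only $O(1)$; your version inflates both criteria by $2^{O(d)}\log n$, which would cost an extra $\log n$ factor inside the square root of the deviation term relative to the stated bound.
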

\begin{proof}
The bi-criteria approximation algorithm
in Lemma~\ref{lem:bicriteria} yields an
$\tilde f\in \barLw_{O(1)L}(\X_n,\rho,\mu_n)$
whose empirical risk is within a
$2^{O(d)}\log(n)$
factor of the optimal 
$R(\hat f;\joint_n)$
and the adversarial extension procedure
in Lemma~\ref{lem:adv-ext-class} for classification
guarantees
that the \extname\ extension of $\tilde f$ from $\X_n$ to $\X$
verifies 
$f\in \barLw_{
2^{O(d)}\log^3(n)L
}(\X,\rho,\mu)$
with high probability.
The generalization bound in (\ref{eq:class-gen}) then applies directly to yield (a).
The runtimes claimed in (b) are demonstrated in 
Remark~\ref{rem:PMSE-argmax} (which argues that \extname\ can be evaluated in time $O(n^2)$)
and the proof of Lemma~\ref{lem:bicriteria}.
\end{proof}  

\paragraph{Bi-criteria approximation for smoothing
under weak mean.} 
We wish to perform smoothing of $\bLaw_f(\mu_n,\X_n)$. For this,
we define the {\em continuous local slope removal problem} (CLSRP) as follows:
Let $(\X,\rho)$ be a metric 
space 
with $\diam(\X)\le 1$ and $\ddim(\X)<\infty$.
Given $S=(\xn,y_{[n]})
\in(\X^n,\{0,1\}^n)$
and $L>0$, relabel the minimal amount of points in
$S$ with any real label in $[0,1]$, 
so that for the resulting label-set the number of points with local 
slope $tL$ or greater is at most ${n}/{t}$ for all real 
$t \in [1,n+1]$.
Notice that solving CLSRP for a given $L$ implies that $\bLaw_f(\mu_n,\X_n) \leq L$. By definition of CLSRP, $k\mu_n(M_f(k)) \leq L$ for $L \le k \le (n+1)L$, while this extends trivially for all $k \le L$ and $k \ge (n+1)L$. 

Suppose that the solution $I$ of CLSRP consists of relabeling $k>0$ points in $S$. Then an $(a,b)$-bicriteria approximation ($a,b \ge 1$) 
to the solution of CLSRP on $I$ is one in which at most 
$ak$ points are relabeled, while the number of points with local
slope $btL$ or greater is at most ${n}/{t}$ for all real
$t \in [1,n+1]$.
We can show the following:

\begin{lemma}
\label{lem:bicriteria}
CLSRP admits a
$(2^{O(\ddim)} \log n, O(1))$-bicriteria approximation in time 
$O(n^2) + 2^{O(\ddim)} n \log^2 n$.
\end{lemma}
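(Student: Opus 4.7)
Since the labels are binary, the local slope at $x_i$ with respect to the sample is exactly $1/\rho(x_i, x_j^*)$, where $x_j^*$ is the nearest oppositely-labeled point to $x_i$. Hence a point has local slope at least $tL$ iff it has an oppositely-labeled neighbor within distance $1/(tL)$, and the weak-mean constraint translates into a collection of ``bi-chromatic packing'' constraints --- at each scale $r = 1/(tL)$, at most $n/t$ points may have an oppositely-labeled point within $r$. My plan is to use a geometric hierarchy and process scales greedily from coarsest to finest.

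First I would compute all distances (or, more efficiently, each point's nearest oppositely-labeled neighbor) in $O(n^2)$ time, and extract a point hierarchy $\{H_{2^{-k}}\}_{k=0}^{K}$ of $\xn$ with $K = O(\log n)$ in time $2^{O(\ddim)} n \log n$, as in the second runtime improvement of Lemma~\ref{lem:smth-reg}. For each level $k$, the Voronoi partition induced by $H_{2^{-k}}$ yields cells of diameter at most $2 \cdot 2^{-k}$; call a cell bi-chromatic if it contains both a $0$-label and a $1$-label. Every point inside such a cell has local slope $\Omega(2^k)$ in the original labeling. For each scale $k$ in increasing order, the algorithm visits every bi-chromatic cell and relabels the minority-label points inside it to match the majority (both values lie in $\{0,1\} \subset [0,1]$), so that after processing, the cell is monochromatic and its internal conflict is eliminated. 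Cross-cell slopes are then at most $O(2^k)$ by the triangle inequality on the net, which yields the constant slope blowup $b$.

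For the relabeling blowup $a$, I would run a charging argument: within each bi-chromatic cell at scale $k$, any feasible solution --- OPT in particular --- must relabel at least one point of every ``independent'' conflict pair, and the doubling-dimension packing bound (\ref{eq:ddim-pack}) guarantees that the minority count in the cell is within a factor $2^{O(\ddim)}$ of the minimum hitting set for those pairs. Summing over the $O(\log n)$ scales, and accounting for the fact that each OPT relabeling may be charged at up to $O(\log n)$ scales, yields the overall $2^{O(\ddim)} \log n$ factor in the number of relabelings. The verification that, after all scales have been swept, the weak-mean constraint holds simultaneously at every $t \in [1, n+1]$ follows because at scale $k$ the budget $n/t = \Theta(n L \cdot 2^{-k})$ matches the maximum number of points that can remain ``bad'' after the sweep up to level $k$, by the packing argument above.

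The main obstacle is the consistency of the charging scheme across scales: a single OPT relabeling can simultaneously eliminate conflicts in several coarser (enclosing) cells and their finer (nested) sub-cells, and the analysis has to bound the total charging depth by $O(\log n)$. The standard trick is to charge each OPT relabeling only to the finest scale at which it is essential to breaking a bi-chromatic cell, using the hierarchical nesting of Voronoi cells to show that this is well defined. For the runtime, per-scale work is $2^{O(\ddim)} n \log n$ (hierarchy queries and updating minority counts via standard cover-tree operations), summed across $O(\log n)$ scales, plus the one-time $O(n^2)$ for the initial slope/neighbor computation, giving the claimed $O(n^2) + 2^{O(\ddim)} n \log^2 n$.
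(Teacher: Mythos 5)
Your reduction of the local slope to the distance to the nearest oppositely-labeled point is right, and the multi-scale net/hierarchy skeleton matches the paper's construction. But there are two genuine gaps. First, your algorithm makes \emph{every} bi-chromatic cell monochromatic at every scale, and your lower bound on OPT asserts that ``any feasible solution must relabel at least one point of every independent conflict pair.'' That is false for CLSRP: a feasible solution is allowed to leave up to $n/t$ points with slope $\ge tL$ at each level $t$, so OPT may legitimately hit \emph{no} conflict pair at a given scale (e.g.\ when only $n/t_i$ points are conflicted there, OPT relabels nothing while your sweep relabels up to $\Theta(n/t_i)$ minority points), giving an unbounded ratio on the number of relabelings. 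The paper's proof is built precisely around this slack: at scale $t_i$ it only insists on fixing all but $m' = m - 6n/(t_iL)$ of the conflicted points, and it chooses \emph{which} neighborhoods to relabel by solving a minimum-knapsack-cover instance (with weights $w(p)=\min\{|S_0(p)|,|S_1(p)|\}$, reflecting that to lower the slope of $N(p)$ one must relabel all of $S_0(p)$ or all of $S_1(p)$) via a $2$-approximation; the packing property then bounds the per-scale cost against OPT.

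Second, relabeling minority points to the in-cell binary majority does not give the claimed $O(1)$ slope blowup. Two points in \emph{adjacent} cells can be arbitrarily close; if they originally carried the same label but their cells have opposite majorities, your flip creates a new label disagreement at arbitrarily small distance, i.e.\ an arbitrarily large local slope that was not present before. The paper avoids this by relabeling the selected points with the \extname\ extension from the untouched points, and then invokes Corollary~\ref{cor:PMSE-prop}(ii) to certify that every relabeled point ends with local slope at most $t_iL/2$ (real-valued relabels are permitted by the CLSRP definition, so there is no need to stay in $\{0,1\}$). Your worry about charging depth across nested scales, on the other hand, is a non-issue: the paper simply pays a factor $2^{O(\ddim)}$ per scale against the \emph{global} OPT and sums over the $O(\log n)$ scales, which is exactly where the $\log n$ in the approximation factor comes from.
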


\begin{proof}
The construction is as follows. Let $t_i = 2^i$ for 
integer $i \in [0, \lceil \log n \rceil]$.
For each $t_i$ we will construct a points set $P_i$ of points to
be relabelled, and the final solution will be 
$P = \cup_i P_i$:

For each $i$, construct a $\frac{1}{2t_i L}$-net of $S$ called $T_i \subset S$. 
Associate every point in $S$ with its nearest neighbor in $T_i$,
and let the neighborhood of $p \in T_i$ ($N(p)$) include all points 
of $S$ associated with $p$.
Now, if not all points in $N(p)$ have the same sign, then create a 
new point $p' \in T_i$ which is a copy of $p$ but with label $1-l(p)$.
Remove from $N(p)$ all points with label $1-l(p)$,
and place them in $N(p')$ instead. (Note that $p$ is
found in $N(p)$, but $p'$ is not found in $N(p')$.)
This can all be done in time $2^{O(\ddim)} n \log n$ using a standard
point hierarchy.

Now create a new subset $T'_i \subset T_i$ thus:
$p \in T_i$ is added to $T'_i$ 
only if there is some point $q \in T_i$ 
with
$l(p) \ne l(q)$
satisfying 
$d(p,q) \le \frac{2}{t_i L}$.
We will show below that (roughly speaking) 
for $p \in T_i'$
all points in $N(p)$ have high local slope constant, 
while for $p \in T_i \setminus T_i'$
all points in $N(p)$ have low local slope constant.
We will associate a weight with each $p \in T_i'$ thus:
For all $p \in T_i'$, let $T_0(p), T_1(p)$
consist of all points of $T_i'$ 
within distance 
$\frac{2}{t_i L}$ of $p$,
and with respective labels 0,1.
Define
$S_0(p) = \cup_{q \in T_0(p)} N(q)$ 
and 
$S_1(p) = \cup_{q \in T_1(p)} N(q)$.
With each point $p \in T$ we associate weight
\[	w(p) = \min \{ |S_0(p)|, |S_1(p)| \}.	\]
Intuitively, this weight reflects the cost of reducing the
local slope constant of all points in $N(p)$;
this requires relabeling all points in either 
$S_0(p)$ or $S_1(p)$.

Let $m = \sum_{p \in T_i'} |N(p)|$.
Let $m' = m - \frac{6n}{t_i L}$
(a value which we will motivate below),
and we wish to find a minimal weight 
subset $C^*_i \subset T_i'$ satisfying that 
$\sum_{p \in C^*_i} |N(p)| \ge m'$.
This is a version of the NP-hard Minimum Knapsack Problem,
but we can find in time $O(n \log n)$ a
subset $C_i \subset T_i'$ satisfying that
$\sum_{p \in C_i} |N(p)| \ge m'$,
with $w(C_i) \le 2w(C^*_i)$ \citep{CFLZ-91}.
Then the set of points to be relabeled is
$P_i = \cup_{p \in C_i} N(p)$,
and as above the final solution is 
$P = \cup_i P_i$.
This completes the construction.

To prove correctness, first fix some $t_i$. 
Consider a pair $p,q \in S$ which are found 
in the neighborhoods of respective points $p',q' \in T$.
If $d(p,q) \le \frac{1}{t_i L}$, 
then by the triangle inequality 
\[
\rho(p',q') 
\le \rho(p,q) + \rho(p,p') + \rho(q,q')
\le \frac{1}{t_i L} + \frac{1}{2 t_i L} + \frac{1}{2 t_i L} 
=   \frac{2}{t_i L}.
\]
It follows that if
$\rho(p',q') > \frac{2}{t_i L}$
then
$\rho(p,q) > \frac{1}{t_i L}$.
Also, if
$\rho(p',q') \le \frac{2}{t_i L}$
then
\[
\rho(p,q) 
\le \rho(p',q') + \rho(p,p') + \rho(q,q')
\le \frac{2}{t_i L} + \frac{1}{2 t_i L} + \frac{1}{2 t_i L} 
=   \frac{3}{t_i L}.
\]

For the approximation bound on the number of relabelled points:
Consider some $p \in T_i'$.
By the above calculation,
if at least one point in each of 
$S_0(p)$ and $S_1(p)$
is not chosen for relabelling, 
then the above bound along with Corollary~\ref{cor:PMSE-prop} imply that
no point of $N(p)$ can attain local slope constant less than 
$\frac{1}{\frac{3}{t_i L} + \frac{3}{t_i L}}
= \frac{t_i L}{6}$.
Now, as $N(p) \subset S_0(p) \cup S_1(p)$,
and since the exact solution to CLSRP has slope constant 
$\frac{t_i L}{6}$
or greater on at most 
$\frac{6n}{t_i L}$
points, the exact solution must relabel all but at most
$\frac{6n}{t_i L}$ 
points of $\cup_{p \in T_i'} N(p)$.
And further, for any point $p$ relabelled by the exact solution
to achieve local slope constant $\frac{t_i L}{6}$ or less,
the exact solution must also relabel all of either
$S_0(p)$ or $S_1(p)$.
By the packing property (\ref{eq:ddim-pack}), for any $t_i$,
any point $q \in S$ appears in $2^{O(\ddim)}$
sets of the form $S_0(p),S_1(p)$,
and so it follows that the number of points relabelled by the above construction
for $t_i$ is within a factor $2^{O(\ddim)}$ of the number of points relabelled by
the exact solution.
Summing over $O(\log n)$ values of $i$, we have that the number of points
relabelled by the approximation algorithm is within a factor 
$2^{O(\ddim)} \log n$
of the exact solution.

For the bound on the local slope constant:
Fix some $t_i$.
For any $p \in T_i$,
if all points in $S_0(p)$ or $S_1(p)$ are relabelled according to \extname,
then as shown above the distance from any point 
$p' \in N(p)$ to a point of label 0 or 1 (respectively)
is greater than $\frac{2}{t_i L}$. 
Then by Corollary \ref{cor:PMSE-prop}(ii), 
the local slope constant of $p$ will be at most $\frac{t_i L}{2}$.
By construction, at most 
$\frac{6n}{t_i L}$ points remain with this slope constant,
and the result follows.
\end{proof}


\section{Adversarial extension: regression}
\label{sec:adv-ext}

As we discussed in Section~\ref{sec:main-res} (and, in greater detail,
at the beginning of Section~\ref{sec:learn}), ensuring that a function
with on-average smooth behavior
on the sample also possesses this property on the whole space is non-trivial.
To this end, we 
introduce
the 
following 
{\bf adversarial extension} game.
First, $\X_n=\Xn\sim\mu^n$ is drawn,
which induces the usual empirical measure $\mu_n$.
Next, the adversary picks
an $\e>0$ and $y:\X_n\to[0,1]$ arbitrarily.
Finally,
the
learner is challenged to construct a function
$f:\X\to[0,1]$ satisfying the following criteria:
\ben
\item[(a)] 
$f$ is close to $y$ on the sample
\ben
\item[(i)] (w.r.t.
  strong average slope):
$
\nrm{f-y}_{\Lp{1}(\mu_n)} \le O(\e) 
(1\vee
\bLas_y(\mu_n,\X_n)
)
,
$
\item[(ii)] (w.r.t.
  weak average slope):
$
\nrm{f-y}_{\Lp{1}(\mu_n)} \le \tilde O(\e) (1\vee\bLaw_y(\mu_n,\X_n));
$
\een
\item[(b)] $f$'s average slope does not much
exceed the sample one
\ben
\item[(i)] (w.r.t.
  strong average):
$
\bLas_f(\mu,\X) \le O(1)
\bLas_y(\mu_n,\X_n),
$
\item[(ii)] (w.r.t.
weak average):
made precise in Lemma~\ref{lem:b.ii}.
\een
\een

Two immediate observations are in order. First, we notice the tension
between the 
criteria (a) and (b). Each one is trivial to satisfy individually
--- (b) by a constant function and (a) by {\em any} proper extension, including \extname\ --- but it is not obvious that both can be satisfied simultaneously. Second, (b) can at best hold with high probability.
Indeed, let $\mu$ be a distribution over $\X=[0,1]$ with high
density near $1/2$ and low density at the endpoints. Suppose further that the sample $\X_n$ has turned out rather unrepresentative:
many points near the endpoints and only two near $1/2$.
In such a setting, the adversary can force, say,
$\bLas_f(\mu,\X)/\bLas_y(\mu_n,\X_n)$
to be large for {\em any} extension $f$ of $y$.

Throughout this section, we assume
$\diam(\X)\le1$
and $d:=\ddim(\X)<\infty$.

\subsection{Proving (a.i) and (b.i), strong mean}
We begin by handling the technically
simpler case of addressing the adversarial
extension problem for the strong mean.

\begin{lemma}[Adversarial extension for strong mean]
\label{lem:adv-ext-strong}
In the adversarial extension game,
there is an efficient algorithm for
satisfying conditions (a.i) and (b.i),
the latter with
probability at least
\beq
1-\exp\left( {-n (\e/8) ^ {d+1} + d\log((8/\e)} \right),
\eeq
where 
$\e<1$
and
$n\ge(8/\e)^{d+2}$.
(For concrete constants, $O(\e)$ in (a.i) may
be replaced by $3\e$ and $O(1)$ in (b.i) by $5$.)

\end{lemma}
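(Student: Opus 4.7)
Proposed algorithm. Set $L^\ast := \bLas_y(\mu_n,\X_n)$. By Markov's inequality on the empirical measure, at most $\eps n$ sample points have $\La_y(X_i,\X_n)\ge L^\ast/\eps$; discard these to obtain $\X_n'$, on which $y$ is $(L^\ast/\eps)$-Lipschitz. Define the learner's hypothesis $f$ to be the \extname extension of $y|_{\X_n'}$ from $\X_n'$ to $\X$ (Definition~\ref{def:PMSE}); this inherits the global Lipschitz bound $\La_f\le L^\ast/\eps$. Condition (a.i) is then immediate: $f$ agrees with $y$ on $\X_n'$ and differs by at most $1$ on the at-most-$\eps n$ discarded points, so $\nrm{f-y}_{\Lp{1}(\mu_n)}\le \eps\le 3\eps(1\vee L^\ast)$.

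The core of the argument is (b.i). Take an $(\eps/8)$-net $V$ of $\X$, yielding a Voronoi partition $\set{W(v)}_{v\in V}$ with $|V|\le(8/\eps)^d$ by the doubling property. Call $W(v)$ \emph{heavy} if $\mu(W(v))\ge(\eps/8)^{d+1}$ and \emph{light} otherwise. Light cells have aggregate $\mu$-mass at most $(8/\eps)^d(\eps/8)^{d+1}=\eps/8$, and together with the bound $\La_f\le L^\ast/\eps$ they contribute at most $L^\ast/8$ to the integral $\bLas_f(\mu,\X)$. The good event I would exploit is that every heavy cell contains at least one sample point: using $(1-p)^n\le\exp(-np)$ with $p=(\eps/8)^{d+1}$ per heavy cell and union-bounding over the at most $(8/\eps)^d$ cells gives failure probability at most $(8/\eps)^d\exp(-n(\eps/8)^{d+1})=\exp(-n(\eps/8)^{d+1}+d\log(8/\eps))$, which matches the lemma's probability exactly.

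On the good event, for each heavy cell $W(v)$ pick $a_v\in\X_n\cap W(v)$ minimizing $\La_y$ within the cell. Using the \extname slope control (Corollary~\ref{cor:PMSE-prop} together with the argmax characterization in Remark~\ref{rem:PMSE-argmax}) and $\diam(W(v))\le\eps/4$, I would bound $\int_{W(v)}\La_f(x,\X)\,d\mu(x)$ by a constant multiple of $\La_y(a_v,\X_n)\mu(W(v))$ up to a lower-order additive slack of $O(L^\ast)$ coming from the Lipschitz term $(L^\ast/\eps)\cdot\eps/4$. By the minimum-slope choice of $a_v$, $\La_y(a_v,\X_n)\mu_n(W(v))\le (1/n)\sum_{X_i\in W(v)}\La_y(X_i,\X_n)$, so summing over heavy cells telescopes to $\sum_v\La_y(a_v,\X_n)\mu_n(W(v))\le \bLas_y(\mu_n,\X_n)=L^\ast$. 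I would then fold in a second Chernoff layer on heavy cells to replace $\mu(W(v))$ by a constant multiple of $\mu_n(W(v))$, absorbing the failure probability into the same bound. The main obstacle is that the truncation set $\X_n\setminus\X_n'$ is random and could strip the chosen $a_v$ out of $\X_n'$, so the \extname reference point is not literally $a_v$ but a near-neighbor in $\X_n'$; I would absorb this by augmenting the good event with a Chernoff control on $|(\X_n\setminus\X_n')\cap W(v)|$ being at most a constant fraction of $|\X_n\cap W(v)|$ per heavy cell, leaving enough surviving sample points in $\X_n'$ to preserve the proximity needed for the \extname slope bound. Combining the heavy-cell and light-cell contributions and tracking constants yields $\bLas_f(\mu,\X)\le 5L^\ast$, as stated.
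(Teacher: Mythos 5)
Your construction omits the paper's crucial $\e$-net step, and this breaks the heart of the proof of (b.i). In the paper, $f$ is the \extname\ of $y$ restricted to an $\e$-net $V$ of the surviving points, and the whole heavy-cell argument rests on Lemma~\ref{lem:pose-smooth}: because the anchor set has minimum interpoint distance greater than $\e$, the local slope $\La_f(\cdot,\X)$ varies by at most a factor of $2$ over any set of diameter $\le\e/2$ (by Corollary~\ref{cor:PMSE-prop}(i) the slope at $x$ is realized by a pair $u^*,v^*$ in the anchor set, so the denominator $\rho(x,u^*)+\rho(x,v^*)\ge\rho(u^*,v^*)>\e$ cannot collapse). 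You instead extend directly from $\X_n'$, which has no interpoint separation, so no such comparability holds: two surviving sample points $x_1,x_2$ at distance $\delta\ll\e$ may have $|y(x_1)-y(x_2)|=(L^*/\e)\delta$, making $\La_f\approx L^*/\e$ near them while $\La_f(a_v,\X)$ is tiny, and if $\mu$ happens to concentrate near that pair the cell's contribution to $\bLas_f(\mu,\X)$ is $\Theta(L^*/\e)\mu(W(v))$, not $O(L^*)\mu(W(v))$. The specific step where your argument goes wrong is the claim that the within-cell slope variation is controlled by ``the Lipschitz term $(L^*/\e)\cdot\e/4$'': that product bounds the variation of the \emph{function values} $|f(x)-f(a_v)|$ over the cell, not the variation of the \emph{local slopes}; converting a function-value bound into a slope bound requires dividing by a distance that, absent the net, can be arbitrarily small. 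This is not a constant-tracking issue but a missing structural ingredient, and it is precisely why the paper inserts the net between the truncation and the extension.

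The rest of your outline is broadly aligned with the paper's: truncating the high-slope points (you threshold via Markov where the paper removes the top $\floor{\e n}$ by rank --- either works), partitioning $\X$ by an $\e/8$-net into light and heavy cells, discarding the light cells using $\lip{f}\le L^*/\e$ (cf.\ Lemma~\ref{lem:max-alg-lip}), and converting $\mu$ to $\mu_n$ on heavy cells via Chernoff (cf.\ Lemma~\ref{lem:cell-rec}). Your (a.i) is also fine for your construction, and is in fact simpler than the paper's because extending from all of $\X_n'$ (rather than from a net of it) makes $f=y$ on every undiscarded sample point. The complication you flag at the end --- that truncation might remove your chosen $a_v$ --- is also dissolved by the paper's approach, since the minimum in the heavy-cell bound is taken over $\La_f(x,\X)$ for $x\in\X_n\cap B$ and the factor-$2$ comparability (which, again, needs the net) makes any sample point in the cell an acceptable representative.
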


\newcommand{\enet}{V}
\newcommand{\Xe}{\X_n(\e)}
\newcommand{\Xec}{\X_n'(\e)}

\begin{proof}
We begin by constructing the extension $f$.
\begin{enumerate}
\item
Sort the $\La_y(X_i,\X_n)$ in decreasing order,
and let $\Xe\subset\X_n$ consist of the $\floor{\e n}$ points
with the largest values (breaking ties arbitrarily).
\item
Put $\Xec:=\X_n\setminus\Xe$.
\item
Let $V$ be an $\e$-net of $\Xec$.
\item
Define $f:\X\to\R$ as the \extname\ extension
of $y$ from $\enet$ to $\X$,
as
defined in Definition~\ref{def:PMSE}.
\end{enumerate}
Since $|\Xec|< n$, the value of $f(x)$ can be computed in time $O(n^2)$
at any given $x\in\X$.
The 
computational runtime for net construction is
$\min \{ 2^{O(d)}n\log(n) \log \Delta,O(n^2)\}$
\citep{KL04}.

\paragraph{Proof of (a.i).
}
Recalling that $f\equiv y$ on $\enet$,
we 
have
\beqn
\nonumber
\nrm{f-y}_{\Lp{1}(\mu_n)} &=&
\frac{1}{n}\sum_{x\in\X_n\setminus\enet}|f(x)-y(x)|
\\
&=&
\label{eq:a.i-decomp}
\frac{1}{n}\sum_{x\in
\Xe
\setminus\enet
}|f(x)-y(x)|
+
\frac{1}{n}\sum_{x\in\Xec
\setminus\enet
}|f(x)-y(x)|
.
\eeqn
Since $0\le f,y\le 1$,
the first term is trivially bounded by 
$|\Xe|/n\le\e$.
To bound the second term, we
recall the map $\phi_V:\Xe\to V$
defined in Section~\ref{sec:def}
---
and in particular, that
$\rho(x,\phi_V(x))\le\e$
---
and
compute
\beqn
\frac{1}{n}\sum_{x\in\Xec
\setminus\enet
}|f(x)-y(x)|
&\le&
\frac{1}{n}\sum_{x\in\Xec\setminus\enet}\frac{
  \e
}{\rho(x,\phi_V(x))}|f(x)-y(x)| \nonumber\\
&\le&
\frac{\e}{n}
\sum_{x\in\Xec\setminus\enet}
\frac{|y(\phi_V(x))-y(x)| + 
|f(x) - y(\phi_V(x))|}{\rho(x,\phi_V(x))} \nonumber\\
&\le&
\frac{\e}{n}
\cdot 2
\sum_{x\in\Xec\setminus\enet}
\La_y(x,\X_n)
\label{eq:a.i-2}
\\
\nonumber
&\le&
2\e
\bLas_y(\mu_n,\X_n)
,
\eeqn
where the third inequality follows from Theorem~\ref{thm:PMSE}:
\beq
\frac{|f(x) - y(\phi_V(x))|}{\rho(x,\phi_V(x))}
=
\frac{|f(x) - f(\phi_V(x))|}{\rho(x,\phi_V(x))}
\le
\La_f(x,V)
\le
\La_y(x,V)
\le
\La_y(x,\X_n)
.
\eeq

Hence $f$ satisfies (a.i) with $3\e$.

\paragraph{Proof of (b.i).}
 Let 
 $q>0$ be a parameter
 to be determined later.
 Let $U\subseteq\X$ be an $\eps/4$-net of $\X$,
 with induced Voronoi partition
$\Pi=\set{ \phi_U\inv(u) : u\in U}$.
Put $m=|\Pi|\le (8/\eps)^{d}$ 
(see (\ref{eq:ddim-pack}))
and segregate the elements of $\Pi$ into 
``light,'' $\Pi_0$, and ``heavy,'' $\Pi_1$:
\beq
\Pi_0:=\set{B\in\Pi: 
\mu_n(B)
< nq/m},
\qquad
\Pi_1:=\set{B\in\Pi: 
\mu_n(B)
\ge nq/m}
.
\eeq

We will need three auxiliary lemmata (whose proof is deferred to the Appendix).

\begin{lemma}[Local slope smoothness of the \extname]
\label{lem:pose-smooth}
Suppose that $A\subseteq\X$ and $f$ is the \extname\ of
some function from $A$ to $\X$. Suppose further that
$E\subseteq\X$ satisfies
\beqn
\label{eq:E}
\diam(E) &\leq& 
\frac{1}{2}\min_{x\neq x'\in A} \rho(x,x').
\eeqn
Then
\beq
\sup_{x,x'\in E}
\frac{\La_{f}(x,\X)}{\La_{f}(x',\X)} \leq 2.
\eeq
\end {lemma}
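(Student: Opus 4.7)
The plan is to read off, for each point of $E$, a pair of ``witnesses'' in $A$ that realize the local slope of the \extname, and then use the diameter bound on $E$ to show that swapping one point of $E$ for another can only distort the distances to these witnesses by a factor of two. Fix $x,x'\in E$ arbitrary; by symmetry it suffices to prove $\La_f(x,\X)\le 2\La_f(x',\X)$. The key input is Remark~\ref{rem:PMSE-argmax}: since $f$ is the \extname\ of a function defined on $A$, the local slope at $x$ is determined by a pair of distinct witnesses $u^*,v^*\in A$ via the equalized-slope identity
\[
\La_f(x,\X) \;=\; \frac{|f(u^*)-f(v^*)|}{\rho(x,u^*)+\rho(x,v^*)},
\]
because the \extname\ places $f(x)$ so that the slopes to $u^*$ and $v^*$ coincide.

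First I would bound $\La_f(x',\X)$ from below by testing against the \emph{same} pair $(u^*,v^*)$. Setting $p=|f(x')-f(u^*)|$, $q=|f(x')-f(v^*)|$, $r=\rho(x',u^*)$, $s=\rho(x',v^*)$, the mediant inequality $(p+q)/(r+s)\le\max(p/r,q/s)$ combined with the real-line triangle inequality $|f(u^*)-f(v^*)|\le p+q$ yields
\[
\La_f(x',\X) \;\ge\; \max\!\left\{\tfrac{p}{r},\tfrac{q}{s}\right\} \;\ge\; \frac{|f(u^*)-f(v^*)|}{\rho(x',u^*)+\rho(x',v^*)}.
\]

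Next I would compare the two denominators using the hypothesis on $\diam(E)$. The metric triangle inequality and $\rho(x,x')\le\diam(E)$ give $\rho(x',u^*)+\rho(x',v^*)\le \rho(x,u^*)+\rho(x,v^*)+2\diam(E)$, while another application of the triangle inequality together with the distinctness of $u^*,v^*\in A$ gives
\[
\rho(x,u^*)+\rho(x,v^*) \;\ge\; \rho(u^*,v^*) \;\ge\; \min_{u\ne v\in A}\rho(u,v) \;\ge\; 2\diam(E).
\]
Substituting the second bound into the first delivers $\rho(x',u^*)+\rho(x',v^*)\le 2(\rho(x,u^*)+\rho(x,v^*))$, and hence
\[
\La_f(x',\X) \;\ge\; \tfrac{1}{2}\cdot\frac{|f(u^*)-f(v^*)|}{\rho(x,u^*)+\rho(x,v^*)} \;=\; \tfrac12\La_f(x,\X),
\]
which is the desired ratio bound.

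The main obstacle I anticipate is the precise invocation of Remark~\ref{rem:PMSE-argmax}: one must verify that both witnesses really lie in $A$, so that the distance lower bound $\rho(u^*,v^*)\ge 2\diam(E)$ kicks in. A related corner case is $x\in A$ itself (the diameter hypothesis permits at most one such point in $E$, since any two points of $A\cap E$ would violate $\min_{u\ne v\in A}\rho(u,v)\ge 2\diam(E)$); in that situation the slope is realized by a single witness in $\X$ via the analogue of Corollary~\ref{cor:PMSE-prop}(i) used in Lemma~\ref{lem:repair}, and the same triangle-inequality bookkeeping closes the argument after absorbing $|f(x)-f(x')|\le \La_f(\cdot,\X)\cdot\rho(x,x')$.
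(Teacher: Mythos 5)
Your proposal is correct and follows essentially the same route as the paper's proof: fix the witness pair $(u^*,v^*)\in A^2$ realizing $\La_f(x,\X)$ via Corollary~\ref{cor:PMSE-prop}(i), test $x'$ against the same pair, and combine $\rho(x,u^*)+\rho(x,v^*)\ge\rho(u^*,v^*)\ge 2\diam(E)$ with the triangle inequality to show the denominator at worst doubles. The only cosmetic difference is that you justify the lower bound $\La_f(x',\X)\ge |f(u^*)-f(v^*)|/(\rho(x',u^*)+\rho(x',v^*))$ by a mediant inequality from the raw definition of $\La_f$, whereas the paper reads it off from the maximality of the pair defining $R^*_{x'}$; the corner case $x\in A$ you flag is already covered uniformly by the paper's Corollary~\ref{cor:PMSE-prop}(i).
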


\begin{lemma}[Accuracy of empirical measure]
\label{lem:cell-rec}
The individual heavy cells have empirical measure within a constant
factor of their true measure, with high probability:
\beqn
\label{eq:heavy-big}
\P\paren{
\min_{B\in\Pi_1}\frac{\mu_n(B)}{
\mu(B)
}\le \frac{1}{2}
} &\le& m\exp(-nq/4m),\\
\label{eq:heavy-small}
\P\paren{
\max_{B\in\Pi_1}\frac{\mu_n(B)}{
\mu(B)
}\ge 2
} &\le& m\exp(-nq/3m).
\eeqn
Additionally, the combined $\mu$-mass of the light cells is
not too large:
\beqn
\label{eq:light-big}
\P\paren{
\sum_{B\in\Pi_0}\mu(B)\ge 2q
}
&\le&
\exp\sqprn{-(m+nq^2)/2+q\sqrt{mn}},
\qquad
nq^2\ge m.
\eeqn

\end{lemma}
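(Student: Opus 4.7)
The plan is to handle all three bounds by reducing them to standard concentration results for the empirical measure $\mu_n$ on the finite index set $\Pi$, whose size is at most $m \le (8/\e)^d$.

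For the per-cell bounds (\ref{eq:heavy-big}) and (\ref{eq:heavy-small}), I would fix a single cell $B\in\Pi$, apply the multiplicative Chernoff inequality to the binomial count $n\mu_n(B)\sim\mathrm{Bin}(n,\mu(B))$, and then take a union bound over the at most $m$ cells of $\Pi$. In (\ref{eq:heavy-big}) the conjunction ``$B\in\Pi_1$ and $\mu_n(B)\le\mu(B)/2$'' forces $\mu(B)\ge 2\mu_n(B)\ge 2q/m$, so the lower-tail Chernoff exponent at relative deviation $1/2$ combined with this floor on $\mu(B)$ yields the $\exp(-nq/(4m))$ rate. In (\ref{eq:heavy-small}), the upper-tail Chernoff bound at relative deviation $1$ directly gives $\exp(-n\mu(B)/3)$; one verifies that the bad event either forces $\mu(B)\ge q/(2m)$ (where Chernoff then delivers the claimed rate) or forces $\mu_n(B)\ge q/m$ when $\mu(B)<q/(2m)$, which is a rarer event already absorbed into the same bound.

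For (\ref{eq:light-big}) the approach is different: the goal is to convert a statement about the $\mu$-mass of the random collection of light cells into a deterministic estimate plus an $\ell_1$-deviation bound. The defining threshold of $\Pi_0$ yields the deterministic inequality $\sum_{B\in\Pi_0}\mu_n(B) < q$, since at most $m$ cells each contribute less than $q/m$. The triangle inequality then produces
\beq
\sum_{B\in\Pi_0}\mu(B) &\le& \sum_{B\in\Pi_0}\mu_n(B)+\sum_{B\in\Pi}\abs{\mu(B)-\mu_n(B)} \\
&<& q + \nrm{\pi-\pi_n}_1,
\eeq
where $\pi,\pi_n$ are the probability measures induced by $\mu,\mu_n$ on the $m$ cells of $\Pi$. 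Invoking the Massart form of the $\ell_1$-concentration inequality referenced as (\ref{eq:TV-bk}) gives $\P\paren{\nrm{\pi-\pi_n}_1\ge q}\le \exp\sqprn{-\tfrac{1}{2}\paren{q\sqrt{n}-\sqrt{m}}^2}$ under the hypothesis $nq^2\ge m$, and expanding the square delivers exactly the stated bound $\exp\sqprn{-(m+nq^2)/2 + q\sqrt{mn}}$.

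The main technical obstacle I anticipate is matching the sharp Chernoff constants $1/4$ and $1/3$ in (\ref{eq:heavy-big}) and (\ref{eq:heavy-small}); the upper tail is clean, but obtaining the $nq/(4m)$ exponent on the lower tail requires invoking the Kullback-Leibler form of the Chernoff bound rather than the textbook quadratic surrogate $\exp(-n\mu(B)/8)$. Everything else is routine bookkeeping over the $m\le(8/\e)^d$ cells.
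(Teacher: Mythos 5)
Your proof follows essentially the same route as the paper's: the first two bounds are per-cell multiplicative Chernoff bounds combined with a union bound over the at most $m$ cells (using, as you do, that the bad event itself forces $\mu(B)\ge 2q/m$ or a comparable floor), and the third is obtained exactly as in the paper via the deterministic estimate $\sum_{B\in\Pi_0}\mu_n(B)<q$, the triangle inequality reducing to $\nrm{\pi-\pi_n}_1\ge q$, and the expectation-plus-McDiarmid bound $\exp\left[-\tfrac{n}{2}\left(q-\sqrt{m/n}\right)^2\right]$, which expands to the stated exponent. One small remark: for (\ref{eq:heavy-big}) the plain quadratic lower-tail surrogate $\exp(-n\mu(B)/8)$ combined with $\mu(B)\ge 2q/m$ already gives $\exp(-nq/4m)$, so no Kullback--Leibler form is needed there; the genuinely delicate constant is the $1/3$ in (\ref{eq:heavy-small}) in the boundary regime $\mu(B)\approx q/2m$, where the crude case split only yields $\exp(-nq/6m)$ --- but the paper's own proof (a one-line citation to multiplicative Chernoff) is no more explicit on this point, and (\ref{eq:heavy-small}) is not invoked downstream.
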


Finally, we bound the 
Lipschitz constant of the \extname\ $f$
of $y$:
\begin{lemma}[Lipschitz constant of $f$]
\label{lem:max-alg-lip}
\beq
\lip{f} &\le& 
2\e\inv \bLaw_y(\mu_n,\X_n)
\le
2\e\inv \bLas_y(\mu_n,\X_n)
.
\eeq
\end{lemma}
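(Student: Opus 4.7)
My plan is to show that $\lip{f}$ is controlled by the largest local slope of any point in the net $V$, and then exploit the fact that $V$ by construction avoids the sample points that carry the highest local slopes.

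First, I would unpack what $f$ is: by step 4 of the construction, $f$ is the \extname\ extension of $y$ from $V$ to $\X$. By Theorem~\ref{thm:PMSE}, the local slope satisfies $\La_f(x,\X)\le \La_y(x,V)$ at every $x\in V$, and Corollary~\ref{cor:PMSE-prop} (together with the fact that the maximal slope of the extension is attained on the original domain) upgrades this pointwise control into a global bound
\[
\lip{f}\;\le\;2\max_{v\in V}\La_y(v,V)\;\le\;2\max_{v\in V}\La_y(v,\X_n),
\]
where the last inequality just uses $V\subseteq\X_n$ so the sup defining $\La_y(v,V)$ is over a subset of that defining $\La_y(v,\X_n)$. (The factor of $2$ absorbs the worst case of slopes at off-net points, which by Corollary~\ref{cor:PMSE-prop} are witnessed by two distinct domain points and thus differ from the on-net slope only by a constant through the triangle inequality.)

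Second, I would exploit the construction of $\Xec$. By step~1, $\Xe$ consists of the $\lfloor\e n\rfloor$ sample points with the \emph{largest} values of $\La_y(\cdot,\X_n)$; step~3 then takes the net $V$ inside $\Xec=\X_n\setminus\Xe$, so every $v\in V$ has $\La_y(v,\X_n)\le s^\star$, where $s^\star$ is the $(\lfloor\e n\rfloor+1)$-th largest value among $\{\La_y(X_i,\X_n)\}_{i\in[n]}$. In particular, at least $\lfloor\e n\rfloor+1$ sample points lie in the level set $M_y(s^\star)$, so
\[
\mu_n(M_y(s^\star))\;\ge\;\frac{\lfloor\e n\rfloor+1}{n}\;>\;\e.
\]

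Finally I would close the loop through the weak-mean definition~(\ref{eq:blaw}): the sup over $t>0$ of $t\,\mu_n(M_y(t))$ is at least $s^\star\cdot\mu_n(M_y(s^\star))>\e s^\star$, giving $s^\star<\e\inv\bLaw_y(\mu_n,\X_n)$. Chaining this with the first step yields $\lip{f}\le 2\e\inv\bLaw_y(\mu_n,\X_n)$, and the second inequality in the statement is the strong-weak mean comparison~(\ref{eq:blasw}).

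The main obstacle is the first step: translating the pointwise slope-domination property of \extname\ into a clean global Lipschitz bound. For the McShane--Whitney extension this would be immediate and constant-free, but since \extname\ is a different extension, I expect to need Corollary~\ref{cor:PMSE-prop} (specifically the ``maximal slope is attained at two distinct domain witnesses'' statement invoked elsewhere, e.g.~Remark~\ref{rem:PMSE-argmax}) plus a triangle-inequality argument to control $\La_f(x,\X)$ for $x\notin V$ by the slopes between two net points, at the cost of the factor of $2$ that appears in the statement.
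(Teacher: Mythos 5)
Your proposal is correct and follows essentially the same route as the paper: bound the slopes surviving on $V$ via the level-set/weak-mean definition, then transfer to $\lip{f}$ via the Lipschitz-preservation property of \extname. One clarification on the constant: Corollary~\ref{cor:PMSE-prop}(iii) gives the exact identity $\lip{f}=\lip{\evalat{f}{V}}$, so no factor of $2$ is needed (or lost) in the extension step; in the paper's proof the $2$ comes instead from the crude bound $\eps n\le 2\floor{\eps n}$ on the removed mass, whereas your sharper count of $\floor{\e n}+1$ points in $M_y(s^\star)$ yields $s^\star<\e\inv\bLaw_y(\mu_n,\X_n)$ directly, so your gratuitous factor of $2$ in the first step happens to land you on the stated bound anyway.
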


Armed with these results,
we are
in a position to prove (b.i).
We choose $q:=\eps/8$
and calculate
\beqn
 \bLas_f(\mu,\X) &=& \int_{\X} \La_{f}(x,\X)\d\mu 
\;=\; 
\sum_{B \in \Pi}\int_{B} \La_{f}(x,\X)\d\mu \nonumber\\
 &=& \sum_{B \in \Pi_0}\int_{B} \La_{f}(x,\X)\d\mu 
 +
\sum_{B \in \Pi_1}\int_{B} \La_{f}(x,\X)\d\mu.
\label{eq:Pi01}
\eeqn
The first term in (\ref{eq:Pi01}) is bounded using
(\ref{eq:light-big}) in Lemma~\ref{lem:cell-rec}
and Lemma~\ref{lem:max-alg-lip}
\beq
 \sum_{B\in\Pi_0}\int_{B}\La_{f}(x,\X)\d\mu &\leq& 
 \sum_{B\in\Pi_0}\int_{B}\frac{2
 \bLas_y(\mu_n,\X_n)
 }{\eps} \d\mu \\
 &=& 
 \frac{2
 \bLas_y(\mu_n,\X_n)
 }{\eps}
 \sum_{B\in\Pi_0}\mu(B)\\
&\le_p& 
\frac{2
\bLas_y(\mu_n,\X_n)
}{\eps}\left(2\cdot\frac{\eps}{8}\right) \le
\bLas_y(\mu_n,\X_n)
,
\eeq
where the inequality indicated by $\le_p$
holds with high probability, as in (\ref{eq:light-big}).

To
bound the second term in (\ref{eq:Pi01}),
we first observe that for all $B \in \Pi$ and $x' \in B$,
\beq
\La_{f}(x',\X)
&\le&
2\min_{x\in\X_n\cap B}
\La_{f}(x,\X)
.
\eeq
Indeed, this follows from Lemma~\ref{lem:pose-smooth}, invoked with
$A=V$ and $E=\X_n\cap B$.
Proceeding,
\beq
\sum_{B\in\Pi_1}
\int_{B} \La_{f}(x,\X)d\mu 
&\le& 
\sum_{B\in\Pi_1}\int_{B} 
2\min_{x\in\X_n\cap B}
\La_{f}(x,\X)
\d\mu \\
&=& 
\sum_{B\in\Pi_1}
2\min_{x\in\X_n\cap B}
\La_{f}(x,\X)
\mu(B)\\
&\le_p& 
4
\sum_{B\in\Pi_1}
\min_{x\in\X_n\cap B}
\La_{f}(x,\X)
\mu_n(B)
\\
&=& 
\frac{4}{n}
\sum_{B\in\Pi_1}
\sum_{x'\in \X_n\cap B}
\min_{x\in\X_n\cap B}
\La_{f}(x,\X)
\\
&\le& 
\frac{4}{n}
\sum_{B\in\Pi_1}
\sum_{x'\in \X_n\cap B}
\La_{f}(x',\X)\\
&\le& 
\frac{4}{n}
\sum_{x'\in \X_n}
\La_{f}(x',\X)
\le 4 \bLas_y(\mu_n,\X_n)
,
\eeq
where the inequality indicated by $\le_p$
holds with high probability, as in
Lemma~\ref{lem:cell-rec},
and the last inequality follows from
Corollary~\ref{cor:PMSE-prop}(i),
since $\La_f(x',\X)$ is determined by
$f$'s values on
$\enet$:
\beq
\La_{f}(x',\X) = 
\La_{f}(x',\enet) = 
\La_{f}(x',\X_n) \le 
\La_{y}(x',\X_n).
\eeq

Plugging these estimates back into (\ref{eq:Pi01}) yields
(b.i) with $5$ in place of $O(1)$.
Applying the union bound to the probabilistic inequalities marked by $\le_p$ above yields the claim with probability at least $1-\delta$,
where
\beq
\delta=
\left[
\exp
\left(
-\frac{m+nq^2}{2}-q\sqrt{mn}
\right)
+
m
\exp
\left(
-\frac{nq}{4m}
\right)\right]
\le
\exp\left( {-n (\e/8) ^ {d+1} + d\log((8/\e)} \right).
\eeq
\end{proof}

\subsection{Proving (a.ii) and (b.ii), weak mean}
As discussed at the end of Section~\ref{sec:learn-reg},
our training procedure for regression obtains
comparable results for both strong- and weak-mean regularization.
Hence, only the former is fleshed out, and the latter,
corresponding to
claims (a.ii) and (b.ii) above,
is not directly invoked in
this paper. We find the proof of (a.ii) and (b.ii) to be of independent
interest,
and present it in
Section~\ref{sec:adv-ext-regr-weak-mean}.

\section{Adversarial extension: classification}
\label{sec:adv-ext-class}

The adversarial extension for classification differs
in several aspects
from its regression
analogue 
in Section~\ref{sec:adv-ext}.
Conceptually, there is a ``type mismatch'' between a $[0,1]$-valued function 
and the $\set{0,1}$ labels it is supposed to predict. The actual prediction is performed
by rounding via $f(x)\mapsto \Int{f(x)}$, but the sample risk
charges a unit loss for every $f(x_i)\neq y_i$, regardless
of how close the two might be.\footnote{
A simple no-free-lunch argument shows that one could not hope to obtain a
generalization bound with sample risk based on the $\Int{f(x_i)}\neq y_i$ loss.
Indeed, the function $f(x_i)=1/2+\eps y(x_i)$ would achieve zero empirical risk while
also having an arbitrarily small Lipschitz constant.
}
Thus, for adversarial extension,
no distortion of the adversary's labels $y$ is allowed --- the only
changes the learner
makes to the sample labels occur during the smoothing procedure in Section~\ref{sec:learn-class}
---
and hence there is no $\eps$ parameter.
The strict adherence to $y$
incurs the cost of a $2^{O(\ddim)}\polylog n$ increase in the average slope of the extension (unlike
in regression, where a small distortion of $y$ afforded an at most constant increase).
Finally, note that though intermediate results for the strong mean are obtained, only those for the weak
mean are algorithmically useful, in light of the hardness result in Section~\ref{sec:learn-class}.

\begin{lemma}[Adversarial extension for classification]
\label{lem:adv-ext-class}
Suppose that $\X_n=\Xn\sim\mu^n$
and
$y:\X_n\to\{0,1\}$ verifies $\bLaw_y(\mu_n,\X_n) \leq n$,
but is otherwise arbitrary.
Then
there is an efficient algorithm for
computing a function $f:\X\to[0,1]$ that coincides with $y$ on $\X_n$
and satisfies
\beqn
\nonumber
\bLaw_f(\mu,\X)
&\le&
\bLas_f(\mu,\X)\\
\label{eq:adv-class}
&\le& 2^{O(\ddim)}\log^2(n) ~\bLas_y(\mu_n,\X_n)\\
\nonumber
&\le& 2^{O(\ddim)}\log^3(n) ~\bLaw_y(\mu_n,\X_n).
\eeqn
with
probability at least
\beq
1-2^{O(\ddim)} \log^3(n)/n.
\eeq
\end{lemma}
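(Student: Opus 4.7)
The hypothesis $f$ is constructed as the \extname\ extension of $y$ from $\X_n$ to $\X$ (Definition~\ref{def:PMSE}), which by Theorem~\ref{thm:PMSE} automatically agrees with $y$ on $\X_n$ and, by Corollary~\ref{cor:PMSE-prop}, satisfies $\La_f(x,\X) \le \La_y(x',\X_n)$ for the witnessing sample point $x'$. The nontrivial content of the lemma is the bound $\bLas_f(\mu,\X) \le 2^{O(\ddim)} \log^2(n)\,\bLas_y(\mu_n,\X_n)$; the further inequality in terms of $\bLaw_y$ will follow from Corollary~\ref{cor:s<=wlogn}, which on a finite space gives $\bLas_y \le O(\log n)\,\bLaw_y$ and accounts for the extra $\log n$ factor in the final bound.

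The strategy is to mimic the heavy/light Voronoi-cell decomposition used in the proof of Lemma~\ref{lem:adv-ext-strong}, but carried out simultaneously at a dyadic family of spatial scales $r_k = 2^{-k}$ for $k = 0, 1, \ldots, K := \lceil \log_2 n \rceil$, since here no distortion of $y$ is permitted and therefore no single natural scale $\eps$ is available. For each $k$ I fix an $r_k$-net $V_k$ of $\X$ with Voronoi partition $\Pi_k$, and declare a cell $B \in \Pi_k$ heavy when $\mu_n(B)$ exceeds a suitably chosen threshold $q_k/|V_k|$ and light otherwise. Applying Lemma~\ref{lem:cell-rec} at every scale and union-bounding the three concentration failures over the $O(\log n)$ scales yields the aggregate failure probability $2^{O(\ddim)} \log^3(n)/n$ claimed in the statement.

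The central calculation then splits $\bLas_f(\mu,\X) = \int_\X \La_f(x,\X)\,d\mu(x)$ across heavy and light cells at each scale. On a heavy cell $B$ at scale $r_k$, Lemma~\ref{lem:pose-smooth} (invoked with $A = \X_n$ and $E = B$) lets me replace $\La_f(x,\X)$ by a constant times $\La_f(x',\X_n) \le \La_y(x',\X_n)$ for any sample point $x' \in B \cap \X_n$, and the concentration $\mu(B) \le 2\mu_n(B)$ from \eqref{eq:heavy-small} converts the integral into a weighted sum over $\X_n$ which telescopes to $2^{O(\ddim)}\,\bLas_y(\mu_n,\X_n)$ after summing over cells. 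On a light cell the contribution is bounded by the product of $\lip{f}$, which the hypothesis $\bLaw_y(\mu_n,\X_n)\le n$ pins down by a polynomial factor in $n$ (via $\lip{f} \le \max_{x'\in\X_n}\La_y(x',\X_n)$), and the total $\mu$-mass of light cells, which is at most $O(q_k)$ by \eqref{eq:light-big}. Summing across the $K = O(\log n)$ scales with matched thresholds $q_k$ produces the advertised $2^{O(\ddim)} \log^2(n)$ factor.

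The main obstacle is the light-cell bound. In regression one could remove the $\eps n$ highest-slope sample points to cap $\lip{f}$ at $O(\bLas_y/\eps)$, but for classification no such removal is allowed, and the Lipschitz constant can be as large as a polynomial in $n$. Controlling the contribution from light cells at every scale simultaneously --- and exhibiting enough cancellation between the smallness of their $\mu$-mass and the crude Lipschitz bound --- is the crux. The multi-scale matching $r_k \leftrightarrow 2^k$ is what makes the sum telescope; the exact power of $\log n$ in the final bound comes out of optimizing the heavy/light thresholds across scales and balancing against the union bound over scales.
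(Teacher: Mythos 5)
Your high-level plan (PMSE extension, a multiscale ball/cell decomposition, concentration at each scale, and Corollary~\ref{cor:s<=wlogn} for the final $\log n$) matches the paper's skeleton, but the step you yourself flag as the crux --- the light-cell contribution --- does not close, and it cannot close in the form you propose. You bound a light cell's contribution by $\lip{f}\cdot\mu(\text{light cells})$ with $\lip{f}\le\max_{x'}\La_y(x',\X_n)$, which under the hypothesis $\bLaw_y(\mu_n,\X_n)\le n$ is only controlled at the level $\lip{f}\le n^2$. Since $\bLas_y(\mu_n,\X_n)$ can be $O(1)$, you would need the total $\mu$-mass of light cells to be $O(n^{-2})$, but no Chernoff/TV bound from an $n$-point sample can certify that a union of empirically-light cells has true mass $o(1/n)$; moreover at fine scales $r_k\sim 1/n$ the number of cells $(2/r_k)^d$ already exceeds $n$, so the per-scale thresholds $q_k$ in Lemma~\ref{lem:cell-rec} cannot be made small enough for the concentration to hold at all. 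A second, independent problem: you invoke Lemma~\ref{lem:pose-smooth} with $A=\X_n$ and $E$ a fixed Voronoi cell of diameter $\sim r_k$, but that lemma requires $\diam(E)\le\tfrac12\min_{x\ne x'\in A}\rho(x,x')$, and the minimum interpoint spacing of the sample is not controlled by anything in the hypotheses.

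The paper's proof avoids both issues by never touching $\lip{f}$. Each point $x\in\X$ is assigned a ball whose radius is \emph{adaptive}, roughly $\rho(x,p_x)/4$ where $p_x$ is $x$'s nearest sample point (capped at radius $1/(4n^2)$, below which balls are monochromatic because $\bLaw_y\le n$); the factor-$2$ slope comparison (\ref{eq:ballsmoothness}) then follows from the triangle inequality and (\ref{eq:Lafpx}) rather than from Lemma~\ref{lem:pose-smooth}. The resulting family $\tilde\balls$ has only $2^{O(d)}\log n$ balls per sample point, and the balls with $\mu_n=0$ are handled not via $\lip{f}$ but by charging each such ball directly to $\La_y(p_x,\X_n)$ through the pointwise optimality of \extname: conditional on the good event each empty ball has $\mu(\ball)\lesssim 2\log(n)/n$, so each sample point $x_i$ absorbs a total charge of $2^{O(d)}\log^2(n)\cdot\La_y(x_i,\X_n)/n$, which sums to $2^{O(d)}\log^2(n)\,\bLas_y(\mu_n,\X_n)$ no matter how large the individual slopes are. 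That charging argument is the missing idea in your proposal.
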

\paragraph{Remark.}
The assumption $\bLaw_y(\mu_n,\X_n) \le n$ incurs no loss of generality
because $\bLaw_f(\mu,\X)>n$ yields vacuous generalization bounds.

\newcommand{\myxi}{x}
\newcommand{\myxj}{x'}
\newcommand{\ball}{B}
\newcommand{\balls}{\mathcal{B}}
\newcommand{\UB}{\tilde {\mathcal C}}
\newcommand{\locd}{d}

\begin{proof}
  Only the estimate in (\ref{eq:adv-class}) requires proof;
  the rest hold everywhere (and in particular,
  with probability $1$) by
  (\ref{eq:blasw}) and Corollary~\ref{cor:s<=wlogn},
  respectively.
  Our algorithm computes $f$ as the \extname\ extension of $y$ from $\X_n$ to all of $\X$. It remains to show that (\ref{eq:adv-class}) holds with the claimed probability. Throughout the proof, $d:=\ddim(\X)$.

Let $\{H_{2^{-i}}\}_{i=0}^{\lceil2\log{2n}\rceil}$ be a point hierarchy for $\X$ and to each net-point $p \in H_{r}$
associate a ball $B(p, r)$. Note that the balls associated with points in the lowest level of the hierarchy have a radius of $r \leq \frac{1}{4n^2}$.
Hence, any ball $\ball$ associated to some point in the lowest level in the hierarchy contains $y$-{\em homogeneously} labeled points of $\X_n$.
That is, for any $\myxi,\myxj \in\ball$ we have $y(\myxi) = y(\myxj)$, otherwise contradicting the assumption that 
$\bLaw_y(\mu_n,\X_n) \leq n$. Furthermore, the nearest opposite-label point is at least at a distance of $1/n^2$ from any 
point in $\ball$; we will refer to this property as the {\em extended monochromatic property}.
Denote by $\balls_j$ the set of balls corresponding to points of $H_j$
(note that $j=2^{-i}$ is typically not an integer).

For any $x\in\X$, denote by $p_x \in \underset{p \in \X_n}\argmin \: \rho(p,x)$ a nearest neighbor of $x$
within the set $\X_n$. By property (ii) of corollary~\ref{cor:PMSE-prop}, we know that
\beqn
\label{eq:Lafpx}
\La_f(x,\X) = \frac{|f(x)-f(p_x)|}{\rho(x,p_x)} \leq \La_f(p_x,\X_n)
.
\eeqn

Let $i(x)$ be the minimum between the following: the smallest power of $1/2$ such that $(1/2)^{i(x)} \leq \rho(x,p_x)/4$ and $\lceil2\log{2n}\rceil$.
Notice that in either case, the inequality 
\beqn
\label{eq:ball_rad_min}
\rho(x,p_x)/8 \leq (1/2)^{i(x)}
\eeqn
holds. Let 
$\ball(x) \in \balls_j$ 
be the ball of radius $(1/2)^{i(x)}$ covering $x$. By the triangle inequality and the extended monochromatic property of the lowest level balls, for any $x' \in \ball(x)$ we have that
\beqn
\label{eq:ballsmoothness}
\frac{1}{2}\La_f(x,\X) \leq \La_f(x',\X) \leq 2\La_f(x,\X).
\eeqn

Now let $\X(i) \subseteq \X$ consist of all points $x \in \X$
for which $\ball(x)$ has radius $(1/2)^i$, 
and let 
$$
\tilde{\balls}_{(1/2)^i} =
\set{\ball(x)\in\balls_{(1/2)^i}:
  x\in\X(i)}
$$
and 
$\tilde{\balls} = \cup_j \tilde{\balls}_j$.
We claim that 
$|\tilde{\balls}| \le 2^{O(\locd)} n \log n$.
Indeed,
for any $p \in \X_n$, let 
$N(p,i) \subseteq \X(i)$
include every point $x \in \X(i)$ for which
$p_x = p$, and define
$$\tilde{\balls}_{(1/2)^i}(p) =
\set{
  \ball(x)\in \tilde{\balls}_{(1/2)^i}:
  x \in N(p,i)
}
$$
and
$\tilde{\balls}(p) = \cup_j \tilde{\balls}_j(p)$.
Since balls in $\tilde{\balls}_{(1/2)^i}(p)$ have radius
$(1/2)^i$ and
their centers
are within distance $8(1/2)^i$ of $p$, the
packing property
(\ref{eq:ddim-pack})
gives that 
$|\tilde{\balls}_{(1/2)^i}(p)| = 2^{O(\locd)}$
and so
$|\tilde{\balls}(p)| = 2^{O(\locd)} \log n$.
It follows that
$
|\tilde{\balls}| 
\le \sum_{p \in \X_n} |\tilde{\balls}_j(p)|
= 2^{O(\locd)} n \log n
.
$

\newcommand{\ntag}{n}
\newcommand{\cE}{\mathcal{E}}
\newcommand{\Xnj}{\X_{n\setminus\myxj}}
\renewcommand{\myxj}{p}

We would like to claim that the empirical measure of each
$\ball\in\tilde\balls$ is close to its true measure.
Some care must be taken here, since $\tilde\balls$ is itself
a random set, determined by the same sample that determines
the empirical measure. However, conditional on any given
$p\in\X_n$ --- which determines the set $\tilde\balls(p)$ ---
we can use the remaining
$
n-1$ sample points to
estimate the mass of each $\ball\in\tilde\balls(p)$.
To avoid the notational nuisance of distinguishing
fractions involving $n$ and $n-1$, we will write
$a\approx b$ to mean $a=(1+\Theta(1/n))b$
and
$a\lesssim b$ to mean $a=(1+O(1/n))b$
for the remainder of the proof.

For any $\ball\in\tilde{\balls}$, let
$\cE_\ball$ be the event that
\beq
\mu(\ball) \gtrsim \frac{2\log{\ntag}}{\ntag}
&\implies&
  \mu(\ball)\lesssim   \mu_n(\ball)\log n
  \eeq
  (in words: if $\ball$ is sufficiently ``heavy'' then it is not under-sampled).
It follows from Theorem~\ref{thm:mult-chernoff}
that $\P(\cE_\ball)\ge1-
O
\paren{\frac{\log^2 \ntag}{\ntag^2}}$
holds for each
$\ball\in\tilde{\balls}$.
We argued above that
$|\tilde{\balls}(p)| = 2^{O(\locd)} \log n$,
whence
the event
$\cE(p):=\cap_{\ball\in\tilde\balls(p)}\cE_\ball$,
conditional on the given $p\in\X_n$,
occurs with probability at least
$
1 -
\frac{2^{O(\locd)}\log^3n}{n^2}
.  
$
Finally, taking a union bound over the
$n$ draws of $\X_n$,
we have that
\beq
\cE :=\bigcap_{p\in\X_n}\cE(p)
\eeq
holds with probability at least
$
1 -
\frac{2^{O(\locd)}\log^3n}{n}
.  
$
The remainder of the proof proceeds conditionally on the high-probability
event $\cE$.

Let $\UB_0$ be the set of $B \in \tilde{\balls}$ such that 
$\mu_n(B)=0$
and $\UB_1 = \tilde{\balls} \setminus 
\UB_0$. Then
\beqn
\label{eq:U12}
\int_{\X}\La_f(x,\X)\d\mu
&\leq&
\sum_{\ball\in \UB_0}\int_{\ball}\La_f(x,\X)\d\mu
+ 
\sum_{\ball\in \UB_1}\int_{\ball}\La_f(x,\X)\d\mu
.
\eeqn

We begin by bounding the first term in (\ref{eq:U12}).
Since $\mu_n(\ball)=0$ for each $\ball\in\UB_0$ and we are assuming
event $\cE$, it follows that each of these balls must verify
$\mu(\ball) \lesssim 2\log(n)/n$.
Recalling the definition of $\tilde\balls(p)$ for $p\in\X_n=\set{x_1,\ldots,x_n}$,
\beq
\sum_{\ball\in \UB_0}
\int_{B}\La_f(x,\X)\d\mu
&=&
\sum_{i\in[n]}
\sum_{\ball\in\tilde\balls(x_i)\cap\UB_0}
\int_{B}\La_f(x,\X)\d\mu
\\
&\le&
\sum_{i\in[n]}
\sum_{\ball\in\tilde\balls(x_i)\cap\UB_0}
\int_{B}\La_f(x_i,\X_n)
\d\mu
\qquad
\text{(by (\ref{eq:Lafpx}))}
\\
&\le&
\sum_{i\in[n]}
\sum_{\ball\in\tilde\balls(x_i)\cap\UB_0}
\int_{B}\La_y(x_i,\X_n)
\d\mu
\qquad
\text{(pointwise optimality of \extname)}
\\
&\lesssim&
\frac{2\log n}{n}
\sum_{i\in[n]}
\sum_{\ball\in\tilde\balls(x_i)}
\La_y(x_i,\X_n)
\qquad
\text{(because
$\mu(\ball) \lesssim 2\log(n)/n$
  )}
\\
&\le&
2^{O(\locd)}\log^2{n}
\cdot\frac1n
\sum_{i\in[n]}
\La_y(x_i,\X_n)
\qquad
\text{(because
$|\tilde{\balls}(x_i)| = 2^{O(\locd)} \log n$
  )}
\\
&=&
2^{O(\locd)}\log^2(n)
~
\bLas_y(\mu_n,\X_n)
.
\eeq

We now proceed to bound the second term in (\ref{eq:U12}).
To this end, we analyze two possibilities: either a $\ball\in\UB_1$
is ``light,'' meaning that 
$\mu(\ball)\lesssim 2\log(\ntag)/\ntag$,
or else ``heavy,'' meaning that
$\mu(\ball)\gtrsim 2\log(\ntag)/\ntag$.
Now $\mu_n(\ball)>0\implies\mu_n(\ball)\ge1/n$,
and so for any light ball we have, by construction,
$\mu(\ball)\lesssim2\log(n)\mu_n(\ball)$.
On the other hand, conditional on event $\cE$, a heavy ball
satisfies
$\mu(\ball)\lesssim \log(n)\mu_n(\ball)$.

\beq
\sum_{\ball\in \UB_1}\int_{\ball}\La_f(x,\X)\d\mu
&\leq&
\sum_{\ball \in \UB_1}
\int_\ball
2\min_{x' \in\ball\cap\X_n}\La_f(x',\X)\d\mu
\qquad\text{(by (\ref{eq:Lafpx}) and (\ref{eq:ballsmoothness}))}
\\
&\lesssim&
2\log{n}\sum_{\ball \in \UB_1}
\mu_n(\ball)
\min_{x' \in\ball\cap\X_n}\La_f(x',\X_n)
\\
&=&
2\log{n}\sum_{\ball \in \UB_1}\frac{1}{n}
\sum_{\myxi \in\ball\cap\X_n}\min_{x' \in\ball\cap\X_n}\La_f(x',\X_n)
\\
&\leq&
2\log{n}\sum_{\ball \in \tilde\balls}\frac{1}{n}
\sum_{\myxi \in\ball\cap\X_n}\La_f(\myxi,\X_n)
\\
&\leq&
2^{O(\locd)}\log^2(n)
\frac{2}{n}
\sum_{\myxi \in \X_n}\La_f(\myxi,\X_n)
\qquad\text{(because $|\tilde\balls|\le2^{O(\locd)}\log n$)}
\\
&\leq&
2^{O(\locd)}\log^2(n)
\frac{2}{n}
\sum_{\myxi \in \X_n}\La_y(\myxi,\X_n)
\\
&=&
2^{O(\locd)}\log^2(n)
\bLas_y(\mu_n,\X_n).
\eeq

\end{proof}

\paragraph{Acknowledgements.}
We thank Luigi Ambrosio and Ariel Elperin
for very helpful feedback on earlier attempts to define a notion of average smoothness,
and to Sasha Rakhlin for useful discussions.
Pavel Shvartsman and Adam Oberman were very helpful in placing
\extname\ in proper historical context.

\bibliography{refs}
\bibliographystyle{abbrvnat}

\appendix

\section{Miscellaneous inequalities and notations}
\label{sec:misc}

\paragraph{Numerical inequalities, $\vee$, $\wedge$.}
We will use the following elementary facts:
for all $a,b,c\in\R$, we have
\beqn
\label{eq:abc}
(a+b+c)^2&\le&
3a^2+3b^2+3c^2
\eeqn
and
\beqn
\label{eq:abc1/2}
|a-b|\vee|a-c|
&\ge& \frac12|b-c|;
\eeqn
for all $a,b,c,d\in\R_{+}$ such that $\frac{a}{c} \neq \frac{b}{d}$, we have
\beqn
\label{eq:abcd}
\frac{a+b}{c+d} 
&>& \frac{a}{c} \wedge \frac{b}{d},
\eeqn
and
for or all $f:\X\to\R$ and $x,y,z\in\X$
such that $
f(x)
\le
f(y)
\le
f(z)
$, we have
\beqn
\label{eq:fxyz}
\frac{f(z)-f(x)}{\rho(z,x)}
\ge
  \frac{f(y)-f(x)}{\rho(y,x)}
  \wedge
  \frac{f(z)-f(y)}{\rho(z,y)}
  ,
\eeqn
where $s\vee t:=\max\set{s,t}$ and $s\wedge t:=\min\set{s,t}$.
The floor $\floor{\cdot}$ and ceiling $\ceil{\cdot}$ functions
map a real number $t$ to its closest integers below and above, respectively.

\paragraph{Bound on $\nrm{\mu-\mu_n}_1$.}
If $\mu$ is a probability measure
with support size $m$ and $\mu_n$
is its empirical realization,
then the following bound is well-known
(see, e.g., \citet[Eqs. (5) and (17)]{Berend2013}):
\beqn
\label{eq:TV-bk}
\norm{\mu-\mu_n}_1 
\le 
\sqrt{\frac{m}{n}}+\sqrt{\frac2n\log\frac2\delta},
\qquad 0<\delta<1
\eeqn
holds with probability at least $1-\delta$.

\paragraph{Order of magnitude.}
We use standard order-of-magnitude
notation $f=O(g)$ to mean that
$0\le f(\cdot)\le c g(\cdot)$ for some universal $c>0$. 
We write $f=\Theta(g)$ to indicate that both $f=O(g)$
and $g=O(f)$ hold.
In the tilde 
notation
($\tilde O(\cdot)$,
$\tilde \Theta(\cdot)$),
logarithmic factors are ignored.


\section{Pointwise Minimum Slope Extension}
\label{sec:PMSE}
As mentioned in Related work, the material in this section
turns out to have been largely anticipated
by \citet{MR2431047}
and is included here for self-containment
and uniformity of notation and terminology. The term \extname\ is ours,
and the pointwise minimality property of this extension was not explicitly
mentioned in \citet{MR2431047} (though is easily derivable from the results
presented therein).

Let $f:\X\to\R$, $x\in\X$ and $\emptyset\neq A\subseteq\X$ be fixed (and hence frequently suppressed in the notation for readability).
We assume for now that $|A|\ge 2$; the degenerate case $|A|=1$ will be handled below.
For $u,v\in A$,
define
\beq
R_x(u,v)&:=&\frac{f(v)-f(u)}{\rho(x,v)+\rho(x,u)},\\
F_x(u,v)&:=& f(u)+R_x(u,v)\rho(x,u)
,\\
R^*_x
&:=&\sup_{u,v\in A}R_x(u,v),\\
W_x(\eps) &:=&\set{ (u,v)\in A^2 :
R_x(u,v)>R^*_x-\eps
},
\qquad 0<\eps<R^*_x\\
\Phi_x(\eps) &:=& \set{F_x(u,v): (u,v)\in W_x(\eps)}.
\eeq
The assumption $|A|\ge2$ implies that $R^*_x>0$.
Further, $\sup_{x\in\X}R^*_x<\infty$ if and only if $\lip{\evalat{f}{A}}<\infty$.

\begin{definition}[\extname]
\label{def:PMSE}
For any metric space
$(\X,\rho)$, any $f:\X\to\R$
and $\emptyset\neq A\subseteq\X$
with
$\lip{\evalat{f}{A}}<\infty$
and
$
\diam(A)
\wedge
\ninf{\evalat{f}{A}}
<\infty$, define the {\bf Pointwise Minimum Slope Extension (\extname)} of $f$ from $A$ to $\X$,
denoted $f_A:\X\to\R$, by
\beqn
\label{eq:PMSE-def}
f_A(x) := 
\lim_{\eps\to0}\sup(\Phi_x(\eps)) = \lim_{\eps\to0}\inf(\Phi_x(\eps)),
\qquad x\in\X.
\eeqn
In the degenerate case $A=\set{a}$, define $f_A(x):=f(a)$.
\end{definition}
The first order of business is to verify that
\extname\ is well-defined (i.e., that the limit in
(\ref{eq:PMSE-def}) indeed exists):

\begin{lemma}
\label{lem:PMSE-argmax}
Assume $
A\subseteq\X$, $|A|\ge2$, and
$\lip{\evalat{f}{A}}<\infty$.
Then, for $(u,v)$ and $(u',v')$ in $W_x(\eps)$,
$\eps<R^*_x/2$,
we have
\beq
\abs{F_x(u,v)-F_x(u',v')}
&\le&
\eps
\min\set{
4\diam(A)
,
16{\ninf{\evalat{f}{A}}}/{R^*_x}
}
.
\eeq
\end{lemma}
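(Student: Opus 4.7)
The strategy is to exploit the maximality of $R^*_x$: for every pair $(a,b)\in A^2$, and in particular for the cross pairs $(u',v)$ and $(u,v')$, the ratio $R_x(a,b)$ is bounded above by $R^*_x$. Starting from $R_x(u',v)\le R^*_x$, i.e.\ $f(v)-f(u')\le R^*_x[\rho(x,u')+\rho(x,v)]$, and substituting the defining identities $f(v)=F_x(u,v)+R_x(u,v)\rho(x,v)$ and $f(u')=F_x(u',v')-R_x(u',v')\rho(x,u')$, I would obtain
\beq
F_x(u,v)-F_x(u',v') &\le& [R^*_x-R_x(u,v)]\rho(x,v)+[R^*_x-R_x(u',v')]\rho(x,u') \\
&<& \eps[\rho(x,u')+\rho(x,v)],
\eeq
where the final step uses that both pairs lie in $W_x(\eps)$. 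The symmetric manipulation starting from $R_x(u,v')\le R^*_x$ yields $F_x(u',v')-F_x(u,v)<\eps[\rho(x,u)+\rho(x,v')]$, so $|F_x(u,v)-F_x(u',v')|$ is controlled by $\eps$ times a sum of distances from $x$ to points of $A$.

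For the $16\eps\ninf{\evalat{f}{A}}/R^*_x$ bound, I would bound each individual distance. The hypothesis $\eps<R^*_x/2$ combined with $(u,v)\in W_x(\eps)$ forces $R_x(u,v)>R^*_x/2$, and together with the trivial bound $|f(v)-f(u)|\le 2\ninf{\evalat{f}{A}}$,
\beq
\rho(x,u)+\rho(x,v) &=& \frac{f(v)-f(u)}{R_x(u,v)} \;<\; \frac{4\ninf{\evalat{f}{A}}}{R^*_x}.
\eeq
Hence each of the four distances $\rho(x,u),\rho(x,v),\rho(x,u'),\rho(x,v')$ is at most $4\ninf{\evalat{f}{A}}/R^*_x$, and the sum of any two of them yields the claimed bound with constants to spare.

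For the alternative $4\eps\diam(A)$ bound, the individual distances $\rho(x,\cdot)$ cannot be controlled by $\diam(A)$ alone (they can be arbitrarily large when $x$ sits far from $A$), so one has to exploit instead the fact that their pairwise \emph{differences} are controlled: $|\rho(x,a)-\rho(x,a')|\le\rho(a,a')\le\diam(A)$ for $a,a'\in A$. The natural route is to rewrite
\beq
F_x(u,v) &=& \tfrac12(f(u)+f(v))+\tfrac12 R_x(u,v)[\rho(x,u)-\rho(x,v)],
\eeq
so that only the diameter-controlled difference $\rho(x,u)-\rho(x,v)$ enters, and then arrange an algebraic cancellation of the $R^*_x$-sized pieces between $F_x(u,v)$ and $F_x(u',v')$, leaving only an $\eps$-scaled residue. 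I expect this cancellation to be the main obstacle: a naive term-by-term estimate loses a Lipschitz-constant factor with no $\eps$-smallness, so the argument must exploit the joint structure of the two pairs rather than treating each pair in isolation.
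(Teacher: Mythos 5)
Your cross-pair argument for the second branch of the minimum is correct and is essentially the paper's own route, executed more cleanly: from $R_x(u',v)\le R^*_x$ you get $F_x(u,v)-F_x(u',v')<\eps[\rho(x,u')+\rho(x,v)]$, and the observation that $\eps<R^*_x/2$ forces every pair in $W_x(\eps)$ into the ball $B(x,4\ninf{\evalat{f}{A}}/R^*_x)$ then gives $8\eps\ninf{\evalat{f}{A}}/R^*_x\le 16\eps\ninf{\evalat{f}{A}}/R^*_x$. (The paper instead orders $F_x(u,v)\le F_x(u',v')$ WLOG and telescopes through the cross pair $(u,v')$, but the algebra is the same.)

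Your treatment of the first branch is incomplete --- you only sketch a hoped-for cancellation and flag it as the obstacle --- but your instinct that $\rho(x,\cdot)$ cannot be controlled by $\diam(A)$ is exactly right, and the cancellation you were looking for does not exist: the $4\eps\diam(A)$ bound is false as stated, and the paper's own proof of it contains precisely the gap you identified. The paper passes from $\frac{F(u',v')-F(u,v)}{\rho(x,v')+\rho(x,u)}$ to $\frac{F(u',v')-F(u,v)}{2\diam(A)}$, which silently assumes $\rho(x,v')+\rho(x,u)\le2\diam(A)$; this fails whenever $x$ is far from $A$. Concretely, take $\X=\R$, $A=\set{0,\,0.01,\,1}$, $f=(0,\,0.4,\,1)$ on $A$, and $x=100$: then $R^*_x=1/199$ is attained by $(0,1)$, the pair $(0.01,1)$ also lies in $W_x(\eps)$ for $\eps=0.0022<R^*_x/2$, yet $\abs{F_x(0,1)-F_x(0.01,1)}\approx0.199$ while $4\eps\diam(A)=0.0088$ (the other branch, $\approx 7$, is respected). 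The correct form of that branch replaces $\diam(A)$ by $\diam(A\cup\set{x})$, or equivalently $\sup_{a\in A}\rho(x,a)$; this follows immediately from the inequality $\abs{F_x(u,v)-F_x(u',v')}<\eps\max\set{\rho(x,u')+\rho(x,v),\,\rho(x,u)+\rho(x,v')}\le2\eps\diam(A\cup\set{x})$ that you already derived. Since this is still a finite, $x$-dependent modulus tending to $0$ with $\eps$, Corollary~\ref{cor:PMSE-def} and everything downstream survive (and in the paper's applications $\diam(\X)\le1$, so the corrected constant is harmless). So: record the bound you already have and stop there; do not chase the midpoint decomposition.
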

\noindent{\bf Remark.}
In words, it suffices for either $A$ to be bounded
or for $f$ to be bounded on $A$ in order that
approximate maximizers of $R_x(\cdot,\cdot)$ all yield approximately the same
value of $F_x(\cdot,\cdot)$.
\begin{proof}
Since
$x$ is fixed, we omit it from the subscripts for readability.
Observe that 
$R(u,v)>0$
for $(u,v)\in W(\eps)$
and
that 
$F(u,v)=
f(v) - R(u,v)\rho(x,v)$.
Thus,
\beqn
\label{eq:fuFuvfv}
f(u) \le F(u,v) \le f(v),
\eeqn
and the same holds for $(u',v')\in W(\eps)$.
There is no loss of generality in assuming
$F(u,v) \le F(u',v')$.
In this case, (\ref{eq:fuFuvfv}) implies that
$f(u) \le f(v')$ and hence
\beq
R^* \geq \frac{{f(v')-f(u)}}{\rho(x,v')+\rho(x,u)}
&=&
\frac{f(v')-F(u',v')+F(u',v')-F(u,v)+F(u,v)-f(u)}{\rho(x,v')+\rho(x,u)}\\
&=&
\frac{f(v')-F(u',v')+F(u,v)-f(u)}{\rho(x,v')+\rho(x,u)} + \frac{F(u',v')-F(u,v)}{\rho(x,v')+\rho(x,u)}\\
&=&
\frac{R(u',v')\rho(x,v')+R(u,v)\rho(x,u)}{\rho(x,v')+\rho(x,u)} + \frac{F(u',v')-F(u,v)}{\rho(x,v')+\rho(x,u)}\\
&\geq&
\frac{R(u',v')\rho(x,v')+R(u,v)\rho(x,u)}{\rho(x,v')+\rho(x,u)} + \frac{{F(u',v')-F(u,v)}}{2\diam(A)}\\
&\ge&
\frac{R(u',v')\rho(x,v')+(R(u',v')-\e)\rho(x,u)}{\rho(x,v')+\rho(x,u)} + \frac{{F(u',v')-F(u,v)}}{2\diam(A)}\\
&=&
R(u',v') -\e +\frac{{F(u',v')-F(u,v)}}{2\diam(A)}.
\eeq
This proves
\beqn
\label{eq:Fxdiam}
\abs{F_x(u,v)-F_x(u',v')}
&\le&
4\eps\diam(A)
.
\eeqn
To prove the remaining claim, we argue that for
$\eps<R^*/2$, all $(u,v)\in W(\eps)$
satisfy 
$u,v\in B(x,2\ninf{\evalat{f}{A}}/R^*)$.
Indeed,
assume for
a contradiction that some 
$(u,v) \in W(\e)$ 
violates this assumption,
with
$\rho(x,v)\vee \rho(x,u) > 2\ninf{\evalat{f}{A}}/R^*$. 
Then,
\beq
R(u,v) 
= 
\frac{f(v)-f(u)}{\rho(x,v)+\rho(x,u)}
\le
\frac{\ninf{\evalat{f}{A}}}{\rho(x,v)+\rho(x,u)}
\le
\frac{R^*}{2}
<
R^* - \e,
\eeq
implying that $(u,v) \notin W(\e)$, a contradiction.
Since the diameter of the point pairs in $W(\eps)$
is at most $4\ninf{\evalat{f}{A}}/R^*$,
we can repeat the calculation leading to
(\ref{eq:Fxdiam}) to complete the proof.
\end{proof}

\begin{corollary}
\label{cor:PMSE-def}
For given
$f:\X\to\R$, $x\in\X$ and $A\subseteq\X$, $|A|\ge2$,
if 
the \extname\ existence condition
\beqn
\label{eq:PMSE-exist}
\lip{\evalat{f}{A}}
\vee
(\diam(A)\wedge\ninf{\evalat{f}{A}})
<\infty
\eeqn
is met,
then
(\ref{eq:PMSE-def})
is well-defined.
\end{corollary}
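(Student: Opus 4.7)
My plan is to derive the corollary essentially as an immediate consequence of Lemma~\ref{lem:PMSE-argmax}, after handling the trivial constant case separately. The first step is to observe that the Lipschitz hypothesis gives a finite upper bound on $R_x^*$: for any $u,v\in A$, the reverse triangle inequality $\rho(x,v)+\rho(x,u)\ge\rho(u,v)$ yields
\[
R_x(u,v)=\frac{f(v)-f(u)}{\rho(x,v)+\rho(x,u)}\le\frac{|f(v)-f(u)|}{\rho(u,v)}\le\lip{\evalat{f}{A}}<\infty,
\]
so $R_x^*\in[0,\infty)$ is well-defined. If $R_x^*=0$, then $f$ is constant on $A$ (take $u,v$ achieving any positive gap to derive a contradiction), and in that case $F_x(u,v)$ equals this constant for every $(u,v)\in A^2$, so $\Phi_x(\eps)$ is a singleton for every $\eps>0$ and the limit trivially exists.

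The nontrivial case is $R_x^*>0$. Here I would apply Lemma~\ref{lem:PMSE-argmax} directly: for every $0<\eps<R_x^*/2$ and any pair $(u,v),(u',v')\in W_x(\eps)$,
\[
|F_x(u,v)-F_x(u',v')|\le\eps\cdot C,\qquad C:=\min\!\set{4\diam(A),\,16\ninf{\evalat{f}{A}}/R_x^*},
\]
and the hypothesis (\ref{eq:PMSE-exist}) guarantees $C<\infty$, since at least one of $\diam(A)$ and $\ninf{\evalat{f}{A}}$ is finite and $R_x^*>0$. Taking the supremum and infimum over $\Phi_x(\eps)$ therefore gives $\sup\Phi_x(\eps)-\inf\Phi_x(\eps)\le C\eps$.

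The final step is to note the monotonicity $W_x(\eps')\subseteq W_x(\eps)$ for $\eps'\le\eps$, whence $\Phi_x(\eps')\subseteq\Phi_x(\eps)$. Consequently $\eps\mapsto\sup\Phi_x(\eps)$ is non-increasing and $\eps\mapsto\inf\Phi_x(\eps)$ is non-decreasing as $\eps\downarrow0$, so both one-sided limits exist in $\R$, and by the diameter bound $C\eps\to0$ they coincide. This common value is precisely the quantity defining $f_A(x)$ in (\ref{eq:PMSE-def}). I do not anticipate a genuine obstacle here; the main subtlety is making sure that the regime $\eps<R_x^*/2$ required by Lemma~\ref{lem:PMSE-argmax} can always be reached, which is exactly why the $R_x^*=0$ case must be dispatched separately before invoking the lemma.
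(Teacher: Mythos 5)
Your proof is correct and follows the same route the paper intends: the corollary is an immediate consequence of Lemma~\ref{lem:PMSE-argmax}, combined with the monotonicity $\Phi_x(\eps')\subseteq\Phi_x(\eps)$ for $\eps'\le\eps$ and the observation that the diameter bound $C\eps$ is finite under (\ref{eq:PMSE-exist}) and tends to $0$. Your separate treatment of the degenerate case $R^*_x=0$ is in fact slightly more careful than the paper, which asserts that $|A|\ge2$ forces $R^*_x>0$ --- a claim that fails when $f$ is constant on $A$, precisely the case you dispatch by noting $\Phi_x(\eps)$ is then a singleton.
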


\begin{remark}
\label{rem:PMSE-argmax}
When $R_x(\cdot,\cdot)$ has a unique maximizer $(u^*,v^*)\in A^2$, the definition of $f_A$ simplifies to
\beqn
\label{eq:fA-argmax}
f_A(x) &=& f(u^*)
+
\frac{\rho(u^*,x)}{\rho(u^*,x)+\rho(v^*,x)}
(f(v^*)-f(u^*)).
\eeqn
In light of Corollary~\ref{cor:PMSE-def},
when (\ref{eq:PMSE-exist}) holds,
there is no loss of generality in assuming that for each $x\in\X$,
there is a unique maximizer $(u^*,v^*)=(u^*(x),v^*(x))$.
In particular, (\ref{eq:fA-argmax}) shows that
for finite $A$, one can compute $f_A(x)$ at any given $x$ in time $O(|A|^2)$.
\end{remark}

For the remainder of this section, $|A|\ge2$
and (\ref{eq:PMSE-exist})
are assumed.
It is readily verified that for $x \in A$, we have $f(x) = f_A(x)$;
thus \extname\ is indeed an extension.

\begin{theorem}[Pointwise minimality of the \extname]
\label{thm:PMSE}
For $A \subseteq \X$ and $f:\X\to\R$,
let $f_A$ be the extension of $f$ from $A$ to $\X$.
Then
\beq
\La_{f_A}(x,\X) \le \La_{f}(x,\X),\qquad x\in\X.
\eeq
\end{theorem}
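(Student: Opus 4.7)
The plan is to prove the chain $\La_{f_A}(x, \X) \le R^*_x \le \La_f(x, \X)$, with the right inequality being the easy half and the left requiring the substantive work. For the right inequality, any $u, v \in A$ satisfy $f(v) - f(u) \le |f(v) - f(x)| + |f(x) - f(u)| \le \La_f(x, \X)(\rho(x, u) + \rho(x, v))$ by the triangle inequality and the definition of $\La_f(x, \X)$, so $R_x(u, v) \le \La_f(x, \X)$, and taking the supremum gives $R^*_x \le \La_f(x, \X)$.

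The heart of the proof is the pointwise estimate $|f_A(x) - f_A(y)| \le R^*_x \rho(x, y)$ for every $x, y \in \X$, from which $\La_{f_A}(x, \X) \le R^*_x$ follows by taking the supremum over $y \ne x$. By Remark~\ref{rem:PMSE-argmax} (and $\varepsilon$-approximation via Lemma~\ref{lem:PMSE-argmax} otherwise), I may assume unique maximizing pairs $(u^*_x, v^*_x)$ and $(u^*_y, v^*_y)$ exist, giving the identities $f_A(x) = f(u^*_x) + R^*_x \rho(x, u^*_x) = f(v^*_x) - R^*_x \rho(x, v^*_x)$, and analogously at $y$. The key idea is to probe $f_A(y) - f_A(x)$ through the mixed pair $(u^*_x, v^*_y)$, whose suboptimality is witnessed simultaneously at both points: $R^*_x \ge R_x(u^*_x, v^*_y)$ and $R^*_y \ge R_y(u^*_x, v^*_y)$. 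Unpacking each inequality, substituting the identities above for $f(u^*_x)$ and $f(v^*_y)$, and applying the triangle inequality to $\rho(x, v^*_y)$ (respectively $\rho(y, u^*_x)$) produces the twin bounds
\begin{align*}
\mathrm{(I)}\qquad & f_A(y) - f_A(x) \le R^*_x \rho(x, y) + (R^*_x - R^*_y) \rho(y, v^*_y), \\
\mathrm{(II)}\qquad & f_A(y) - f_A(x) \le R^*_y \rho(x, y) + (R^*_y - R^*_x) \rho(x, u^*_x).
\end{align*}
Exactly one of these collapses to $R^*_x \rho(x, y)$ depending on the ordering of $R^*_x$ and $R^*_y$: if $R^*_y \ge R^*_x$ the corrective term in (I) is nonpositive; if $R^*_y \le R^*_x$ the leading coefficient in (II) satisfies $R^*_y \rho(x, y) \le R^*_x \rho(x, y)$ and its corrective term is nonpositive. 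Running the symmetric argument on the mixed pair $(u^*_y, v^*_x)$ controls $f_A(x) - f_A(y) \le R^*_x \rho(x, y)$, completing the pointwise estimate.

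Taking the supremum over $y$ then yields $\La_{f_A}(x, \X) \le R^*_x \le \La_f(x, \X)$ as desired. The degenerate cases $|A| = 1$ (where $f_A$ is the constant $f(a)$) and $R^*_x = 0$ (which forces $f$, and hence $f_A$, to be constant on $A$) are immediate. I expect the main obstacle to be the case-split on the ordering of $R^*_x$ and $R^*_y$: each suboptimality inequality alone leaves an extraneous corrective term whose sign depends on this ordering, and it is only by playing the two viewpoints on the same mixed pair against each other that the cross terms can be cleanly absorbed.
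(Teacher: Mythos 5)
Your proof is correct, and it takes a cleaner, more direct route than the paper's. The paper decomposes the claim into three sub-claims according to whether the two points lie in $A$ or in $\X\setminus A$ (Claims I--III), proves each by contradiction using the mediant inequality (\ref{eq:abcd}), and only then assembles the conclusion $\La_{f_A}(x,\X)=\La_{f_A}(x,A)\le\La_f(x,A)\le\La_f(x,\X)$. You instead establish the single quantitative estimate $|f_A(x)-f_A(y)|\le R^*_x\,\rho(x,y)$ for all $x,y\in\X$ at once, by probing the mixed pair $(u^*_x,v^*_y)$ from both base points and absorbing the corrective terms via the case split on the ordering of $R^*_x$ and $R^*_y$; I verified the algebra in (I) and (II) and the symmetric bound via $(u^*_y,v^*_x)$, and they check out (note that the triangle-inequality step needs $R^*_x,R^*_y\ge0$, which holds, with the constant case handled separately as you do). Your version buys a uniform treatment that never distinguishes whether $x$ or $y$ lies in $A$, it is a direct rather than contradiction argument, and it isolates the identity $\La_{f_A}(x,\X)\le R^*_x\le\La_f(x,\X)$ explicitly --- in effect you re-derive Corollary~\ref{cor:PMSE-prop}(i) along the way. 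What the paper's decomposition buys in exchange is that Claims I--III are reused verbatim elsewhere (in Lemma~\ref{lem:repair} and Corollary~\ref{cor:PMSE-prop}(ii)--(v)), so if your proof were adopted those auxiliary facts would need to be extracted separately. Both arguments rest on the same non-trivial inputs: the two-sided representation $f_A(x)=f(u^*_x)+R^*_x\rho(x,u^*_x)=f(v^*_x)-R^*_x\rho(x,v^*_x)$ and the maximality of $R^*_x$ over all pairs in $A^2$, together with the same appeal to Remark~\ref{rem:PMSE-argmax} (or an $\eps$-approximation) when the supremum is not attained.
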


\begin{proof}
We break down the proof into three shorter claims. As argued in Remark~\ref{rem:PMSE-argmax}, 
there is no loss of generality in assuming, for any $x\in\X$, a unique maximizer $(u^*,v^*)=(u^*(x),v^*(x))$ of $R_x(u,v)$ over $A^2$.

\paragraph{Claim I.}
For any $x \in \X\setminus A $, 
\extname\
achieves the minimum local slope 
on
$A$ among all functions that agree with $f_A$ on $A$:

\beq
\La_{f_A}(x,A)
\le
\La_{f}(x,A)
,
\qquad x\in\X\setminus A.
\eeq

We first show that $f_A$ achieves the optimal slope at $x$ with respect to $(u^*,v^*)
$, which define $f_A(x)$ as in (\ref{eq:fA-argmax}). 
It is enough to show that 
$\frac{|f_A(u^*) -f_A(x)|}{\rho(u^*,x)} = \frac{|f_A(v^*) -f_A(x)|}{\rho(v^*,x)}$, 
since any other value of $f_A(x)$, for which the equality does not hold, will result in a larger slope between $x$ and either $u^*$ or $v^*$. 
In light of (\ref{eq:fuFuvfv}), there is no loss of generality in assuming
$f_A(u^*) \leq f_A(x) \leq f_A(v^*)$. Then:
\begin{equation} \label{eq1}
\begin{split}
 \frac{|f_A(u^*) - f_A(x)|}{\rho(u^*,x)} & = 
 \frac{\left[ f(u^*)+\frac{\rho(u^*,x)}{\rho(v^*,x)+\rho(u^*,x)}(f(v^*)-f(u^*))\right]\ - f(u^*)}{\rho(u^*,x)} \\
 & = \frac{f(v^*)-f(u^*)}{\rho(v^*,x)+\rho(u^*,x)} \\
 & = \frac{f(v^*) - \left[ f(v^*)-\frac{\rho(v^*,x)}{\rho(v^*,x)+\rho(u^*,x)}(f(v^*)-f(u^*))\right]}{\rho(v^*,x)} \\
 & = \frac{|f_A(v^*) - f_A(x)|}{\rho(v^*,x)}.
\end{split}
\end{equation} 
Let $\ell_1 :=  \frac{|f_A(u^*) -f_A(x)|}{\rho(u^*,x)} = \frac{f(v^*)-f(u^*)}{\rho(v^*,x)+\rho(u^*,x)}$. 
It remains to show that
$\frac{|f_A(x') -f_A(x)|}{\rho(x',x)} \leq \ell_1$
for all $x' \in A$. 
Assume, 
to the contrary,
the existence of an $x' \in A$ such that 
$\frac{f_A(x') -f_A(x)}{\rho(x',x)} > \ell_1$. 
Then by (\ref{eq:abcd}),
\beq
\frac{f(x')-f(u^*)}{\rho(x',x)+\rho(x,u^*)} = \frac{f_A(x')-f_A(x)+f_A(x)-f_A(u^*)}{\rho(x',x)+\rho(x,u^*)}> \ell_1,
\eeq
which contradicts the 
definition of $(u^*,v^*)$ 
in (\ref{eq:fA-argmax})
as maximizers of $R_x$.
This proves Claim I.

\paragraph{Claim II.}

\beq
\La_{f_A}(x,\X\setminus A) 
&\leq&
\La_{f_A}(x,A)
,
\qquad x\in\X\setminus A.
\eeq
Let us define the {\em slope} operator $S(u,v):=|f_A(u)-f_A(v)|/\rho(u,v)$. 
It suffices to show that 
\beq
S(x,y) \le \La_{f_A}(x,A) \wedge \La_{f_A}(y,A),
\qquad x,y\in\X\setminus A.
\eeq
Let 
$(u^*(x),v^*(x))$ and
$(u^*(y),v^*(y))$ 
be as defined in (\ref{eq:fA-argmax}).
It follows from the proof of Claim I that $S(x,u^*(y)) \vee S(x,v^*(y)) \leq \La_{f_A}(x,A)$. 

Assume for concreteness that
$\La_{f_A}(x,A)\le \La_{f_A}(y,A)$.
As in the proof of Claim I, 
$f_A(u^*(x)) \le f_A(x) \le f_A(v^*(x))$ 
and 
$f_A(u^*(y)) \le f_A(y) \le f_A(v^*(y))$.
Suppose, for a contradiction, that $S(x,y) > \La_{f_A}(x,A)$. 

If $f_A(x) \leq f_A(y)$, 
then $$f_A(v^*(y)) = f_A(x) +
\rho(x,y)S(x,y) +\rho(y,v^*(y))\La_{f_A}(y) > f_A(x)+\rho(x,v^*(y)) \La_{f_A}(x),$$ implying that
$S(x,v^*(y)) > \La_{f_A}(x)$ --- a contradiction. 

If $f_A(x) > f_A(y)$, 
then 
$$f_A(x) = f_A(u^*(y)) + \rho(u^*(y),y)\La_{f_A}(y) + \rho(y,x)S(x,y) > f_A(u^*(y)) + \rho(u^*(y),x)\La_{f_A}(x),$$
implying that
$S(x,u^*(y)) > \La_{f_A}(x)$ --- a contradiction, which proves Claim~II.

\paragraph{Claim III.}
\beq
\La_{f_A}(x,\X) = \La_{f_A}(x, A) \le \La_{f}(x,\X),\qquad x\in A.
\eeq
Assume for a contradiction that for some $y \notin A$ we have 
\beq
\La_{f_A}(x,\X) \geq
S(x,y)=
\frac{\abs{f_A(x) - f_A(y)}}{\rho(x,y)} 
> 
\La_{f_A}(x,A)
.
\eeq
Let $(u^*(y),v^*(y)) \in A^2$ be the maximizer defining $f_A(y)$,
as in Remark~\ref{rem:PMSE-argmax}.
Since $x \in A$, Claim I implies that
$
S(y,u^*(y)) = S(y,v^*(y)) \geq S(x,y)$. 
Then by (\ref{eq:abcd}),
either $f_A(x) \geq f_A(y)$ satisfying
$$\frac{f_A(x)-f_A(u^*(y))}{\rho(x,y)+\rho(u^*(y),y)} 
=
\frac{f_A(x) - f_A(y) + f_A(y) - f_A(u^*(y))}{\rho(x,y)+\rho(u^*(y),y)}
>
\La_{f_A}(x,A)
,
$$
or $f_A(y) > f_A(x)$ satisfying
$$\frac{f_A(v^*(y))-f_A(x)}{\rho(v^*(y),y)+\rho(x,y)} 
=
\frac{f_A(v^*(y)) - f_A(y) + f_A(y) - f_A(x)}{\rho(v^*(y),y)+\rho(x,y)}
>
\La_{f_A}(x,A)
.
$$
Either of these contradicts the maximizer property of $(u^*(y), v^*(y))$,
which proves Claim III.

\paragraph{Putting it together.}
Combining Claims II and III
yields
that the local slope of $x$ with respect to $f_A$
is determined by a point in $A$:
\beq
\La_{f_A}(x,\X) 
= 
\La_{f_A}(x,A),
\qquad x\in\X.
\eeq
Therefore:
\begin{equation} \label{eq2}
\begin{split}
 \La_{f_A}(x,\X) & = 
 \La_{f_A}(x,A) 
 \leq \La_{f}(x,A)
 \leq \La_{f}(x,\X)
\end{split}
\end{equation}
where the first inequality stems from Claim I and the fact that $f$ and $f_A$ agree on $A$.
\end{proof}

\begin{corollary}[Properties of the \extname]
\label{cor:PMSE-prop}
Suppose that $A \subseteq \X$
and $f:\X\to\R$
satisfy (\ref{eq:PMSE-exist}).
Then for any $x \in \X$,
$B \subseteq A$,
and $(u^*, v^*)=(u^*(x), v^*(x))$ as defined in
Remark~\ref{rem:PMSE-argmax},
the following hold:
\begin{enumerate}
    \item[(i)] {Local slope value}:
    $$\La_{f_A}(x,\X) = \frac{\abs{f_A(x) - f_A(u^*)}}{\rho(x,u^*)} = \frac{\abs{f_A(x) - f_A(v^*)}}{\rho(x,v^*)} = \frac{\abs{f_A(v^*) - f_A(u^*)}}{\rho(v^*,x) + \rho(x,u^*)};$$
    \item[(ii)] {Local slope bounds}:
    $$\La_{f_A}(x,\X) \leq \La_{f_A}(u^*,A) \wedge \La_{f_A}(v^*,A) \leq \La_{f_A}(u^*,\X) \wedge \La_{f_A}(v^*,\X);$$
    \item[(iii)] {Lipschitz}: $\lip{f_A} = \lip{\evalat{f}{A}}$;
    \item[(iv)] {Local slope monotonicity}: $\La_{f_{B}}(x,\X) \leq \La_{f_A}(x,\X)$;
    \item[(v)] {Extension sandwich}: $f_A(u^*) \leq f_A(x) \leq f_A(v^*)$.
\end{enumerate}
\end{corollary}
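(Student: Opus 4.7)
\textbf{Proof plan for Corollary~\ref{cor:PMSE-prop}.} The five items are essentially a bookkeeping exercise that unpacks the definition (\ref{eq:fA-argmax}) of $f_A$ and invokes Theorem~\ref{thm:PMSE} together with the calculations already carried out in its proof. I plan to address them in the order (v), (i), (ii), (iii), (iv), since later items reuse the earlier ones.

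First, (v) is immediate from (\ref{eq:fuFuvfv}) in Lemma~\ref{lem:PMSE-argmax}: since $u^*,v^*\in A$ we have $f_A(u^*)=f(u^*)$ and $f_A(v^*)=f(v^*)$, while $f_A(x)=F_x(u^*,v^*)$ by Remark~\ref{rem:PMSE-argmax}, giving $f_A(u^*)\le f_A(x)\le f_A(v^*)$. Next, (i) is exactly the chain of equalities carried out in display (\ref{eq1}) inside the proof of Claim~I of Theorem~\ref{thm:PMSE}: one simply substitutes the closed form (\ref{eq:fA-argmax}) for $f_A(x)$ and verifies algebraically that the three candidate slopes coincide. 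To promote these to equalities involving $\La_{f_A}(x,\X)$ rather than merely slopes to the specific pair $u^*,v^*$, I will invoke Claims~II and III of Theorem~\ref{thm:PMSE}, which together establish that $\La_{f_A}(x,\X)=\La_{f_A}(x,A)$ for every $x\in\X$; combined with the maximality of $(u^*,v^*)$ this shows the common value in (i) actually equals the global local slope.

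For (ii), using the last expression in (i), namely $\La_{f_A}(x,\X)=|f_A(v^*)-f_A(u^*)|/(\rho(v^*,x)+\rho(x,u^*))$, the triangle inequality $\rho(u^*,v^*)\le\rho(u^*,x)+\rho(x,v^*)$ shows that this ratio is bounded above by $|f_A(v^*)-f_A(u^*)|/\rho(u^*,v^*)$, which is a particular slope contributing to both $\La_{f_A}(u^*,A)$ and $\La_{f_A}(v^*,A)$. The second inequality in (ii) is trivial since $A\subseteq\X$ gives $\La_{f_A}(\cdot,A)\le\La_{f_A}(\cdot,\X)$. For (iii), note that $\lip{f_A}=\sup_{x\in\X}\La_{f_A}(x,\X)$; Theorem~\ref{thm:PMSE} applied pointwise gives $\lip{f_A}\le\sup_{x\in\X}\La_f(x,\X)$, but because $\La_{f_A}(x,\X)=\La_{f_A}(x,A)=\La_f(x,A)\le\lip{\evalat{f}{A}}$ (the first equality by Claims~II/III, the second since $f_A$ agrees with $f$ on $A$), we get $\lip{f_A}\le\lip{\evalat{f}{A}}$; the reverse inequality is trivial since $f_A$ extends $f|_A$.

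Finally, (iv) is the only item whose idea requires a small twist: I apply Theorem~\ref{thm:PMSE} with the roles of $f$ and $A$ replaced by $f_A$ and $B$. The resulting extension $(f_A)_B$ depends only on the values of $f_A$ on $B$, and since $B\subseteq A$ these values coincide with $f$, so $(f_A)_B=f_B$; Theorem~\ref{thm:PMSE} then yields $\La_{f_B}(x,\X)=\La_{(f_A)_B}(x,\X)\le\La_{f_A}(x,\X)$. I expect this last step to be the main (though still minor) subtlety of the proof, because it requires recognizing that PMSE is functorial under restriction of the source set. The rest is routine algebra and triangle-inequality manipulation.
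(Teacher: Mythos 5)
Your proposal is correct and follows essentially the same route as the paper: (v) and (i) from the computations inside Claims I--III of Theorem~\ref{thm:PMSE}, (ii) via the triangle inequality applied to the last expression in (i), (iii) from (ii) plus the trivial reverse inclusion, and (iv) by applying Theorem~\ref{thm:PMSE} with $f_A$ in the role of $f$ (your explicit remark that $(f_A)_B=f_B$ since $f_A$ agrees with $f$ on $B\subseteq A$ is exactly the point the paper leaves implicit).
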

\begin{proof}
All of the claims follow from Theorem~\ref{thm:PMSE} and its proof:
\begin{enumerate}
    \item[(i)] Follows from Claim II and the proof of Claim I.
    \item[(ii)] Follows from (i): $\La_{f_A}(x,\X) = \frac{\abs{f_A(v^*) - f_A(u^*)}}{\rho(v^*,x) + \rho(x,u^*)} \leq \frac{\abs{f_A(v^*) - f_A(u^*)}}{\rho(v^*,u^*)} \leq \La_{f_A}(u^*,A)$.
    \item[(iii)] Follows from (ii): $\La_{f_A}(x,\X) \leq \La_{f_A}(u^*,A) \leq \lip{\evalat{f}{A}}$.
    \item[(iv)] Direct result of Theorem~\ref{thm:PMSE} 
    where $f_A$ assumes the role of $f$.
    \item[(v)] Was proved in the course of proving Claim I.
\end{enumerate}
\end{proof}

\section{Auxiliary results.}

\begin{lemma}
\label{lem:strong-weak-log}
Suppose 
that $\X$ is a finite set and
$X:\X\to\R_+$ is a random variable with range
$R=X(\X)\subset\R$.
Then
\beq
\E[X] &\le& 2\W[X]{\log\frac1{p_*}}
,
\eeq
where $p_*:=\min\set{
\P(X=r)\neq0: r\in R}$
and 
$\W[X]
:=
\sup_{t>0}t\P(X\ge t)
$
is the ``weak mean'' of $X$.

\end{lemma}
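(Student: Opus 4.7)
My plan is to apply the layer-cake identity $\E[X] = \int_0^\infty \P(X > t)\,dt$, which under the finite-support hypothesis truncates at $r_{\max} := \max R$. The critical preliminary observation is that the weak mean itself already bounds $r_{\max}$: since $\P(X = r_{\max}) \geq p_*$ by definition of $p_*$, we have $\P(X \geq r_{\max}) \geq p_*$, so
$\W[X] \geq r_{\max}\P(X \geq r_{\max}) \geq r_{\max}\, p_*$, and hence $r_{\max} \leq \W[X]/p_*$.

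Armed with this truncation bound, I would combine the trivial estimate $\P(X > t)\leq 1$ (useful for small $t$) with the weak-mean tail estimate $\P(X > t) \leq \W[X]/t$ (useful for large $t$), splitting the integral at the crossover point $t=\W[X]$. A direct calculation gives
$$
\E[X] \;\leq\; \int_0^{\W[X]}\!1\,dt \;+\; \int_{\W[X]}^{r_{\max}}\!\frac{\W[X]}{t}\,dt \;=\; \W[X] + \W[X]\log\!\frac{r_{\max}}{\W[X]} \;\leq\; \W[X]\!\left(1 + \log\frac{1}{p_*}\right).
$$
The stated factor of $2$ then follows from the elementary inequality $1 + \log(1/p_*) \leq 2\log(1/p_*)$, which is valid as soon as $\log(1/p_*) \geq 1$, i.e.\ $p_* \leq 1/e$; this is the only nondegenerate regime of interest, and it comfortably covers the paper's applications where $p_* \asymp 1/n$.

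The main conceptual point --- rather than any serious technical obstacle --- is recognizing that without the finite-support hypothesis (or some surrogate lower bound on atoms), the integral $\int \W[X]/t\,dt$ would diverge at infinity and the weak mean could not possibly control the expectation. The $\log(1/p_*)$ factor is therefore the precise price of converting a weak-mean bound into a strong-mean one, with the smallest atom dictating the cutoff; the rest of the argument is a one-line computation.
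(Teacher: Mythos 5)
Your proof is correct and essentially identical to the paper's: both use the layer-cake formula $\E[X]=\int_0^{r_{\max}}\P(X\ge t)\,\d t$, split the integral at the crossover between the trivial bound $\P(X\ge t)\le 1$ and the weak-mean tail bound $\W[X]/t$ (the paper splits at $p_* r_{\max}$, you at $\W[X]$; the two choices yield the same estimate once the shared observation $\W[X]\ge p_* r_{\max}$ is applied), and both finish with $1+\log(1/p_*)\le 2\log(1/p_*)$. Your explicit flagging of the requirement $p_*\le 1/e$ for that final step is a caveat the paper's own proof silently assumes as well, so your write-up is if anything slightly more careful.
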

\begin{proof}
Put $r^*=\max R$. Then
\beq
\E[X] &=& \int_0^{r^*}\P(X\ge t)\d t \\
&\le& a + \W[X]\int_a^{r^*}\frac{\d t}{t},
\qquad 0< a\le r^*.
\eeq
The integral evaluates to $\log(r^*/a)$,
and
our choice $a:=p_*r^*$ yields the bound
\beq
\E[X] &\le& p_*r^* + \W[X]\log\frac{1}{p_*}
\\ 
&\le& 
\W[X] + \W[X]\log\frac{1}{p_*}\\
&\le& 2
\W[X]
\log\frac{1}{p_*}
.
\eeq

\end{proof}
\begin{corollary}
\label{cor:s<=wlogn}
Let $(\X,\rho)$ be a finite metric space
endowed with the uniform
distribution $\mu$.
Then, for any $f:\X\to\R$, we have
\beq
\bLas_f(\mu,\X)
&\le&
2{\log(|\X|)}
\bLaw_f(\mu,\X).
\eeq
\end{corollary}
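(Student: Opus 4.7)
The plan is to obtain the corollary as a direct application of Lemma~\ref{lem:strong-weak-log} to the non-negative random variable $Z = \La_f(X,\X)$ with $X \sim \mu$. By the definitions in (\ref{eq:blas}) and (\ref{eq:blaw}), we have $\E[Z] = \bLas_f(\mu,\X)$ and $\W[Z] = \bLaw_f(\mu,\X)$, so the corollary reduces to controlling $\log(1/p_*)$, where $p_* = \min\{\P(Z = r) \neq 0 : r \in Z(\X)\}$.

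Since $\mu$ is uniform on the finite set $\X$, each atom has mass $1/|\X|$. For any value $r$ in the (finite) range of $Z$, the preimage $Z^{-1}(r) \subseteq \X$ is a union of at least one atom, whence $\P(Z = r) \ge 1/|\X|$ whenever $\P(Z=r) > 0$. Therefore $p_* \ge 1/|\X|$, and consequently $\log(1/p_*) \le \log|\X|$.

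Plugging this bound into the conclusion of Lemma~\ref{lem:strong-weak-log} gives
\[
\bLas_f(\mu,\X) \;=\; \E[Z] \;\le\; 2\,\W[Z]\,\log(1/p_*) \;\le\; 2\log(|\X|)\,\bLaw_f(\mu,\X),
\]
which is exactly the claimed inequality. There is no real obstacle here; the only subtlety is noticing that the uniformity of $\mu$ on a finite space forces $p_* \ge 1/|\X|$ so that the logarithmic factor in Lemma~\ref{lem:strong-weak-log} collapses to $\log|\X|$.
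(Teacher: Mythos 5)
Your proof is correct and is precisely the intended derivation: the paper states the corollary immediately after Lemma~\ref{lem:strong-weak-log} without a separate proof, relying on exactly the observation you make that uniformity forces $p_*\ge 1/|\X|$. Nothing to add.
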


\subsection{Adversarial extension: deferred proofs}

\begin{proof}[Proof of Lemma~\ref{lem:pose-smooth}]
By 
Corollary~\ref{cor:PMSE-prop}(i),
we have that the local slope of $f$ at $x$ is determined by a pair of points $u^*,v^* \in \X_n$:
\beq
\La_f(x) = \frac{f(v^*)-f(u^*)}{\rho(v^*,x)+\rho(x,u^*)}.
\eeq
From (\ref{eq:E}), we have 
$\rho(v^*,x) + \rho(x,u^*) \geq \rho(v^*,u^*) 
\ge 2\diam(E)$,
and hence
$\rho(v^*,x) + \rho(x,u^*) + 2\diam(E) \leq 2(\rho(v^*,x) + \rho(x,u^*))$.
Thus,
\begin{align*}
\La_{f}(x')
& \geq
\frac{f(v^*)-f(u^*)}{\rho(v^*,x')+\rho(x',u^*)}\\
& \geq
\frac{f(v^*)-f(u^*)}{\rho(v^*,x) + \diam(E) + \rho(x,u^*) + \diam(E)} \\
& \geq \frac{f(v^*)-f(u^*)}{2(\rho(v^*,x)+ \rho(x,u^*))} 
= \frac{\La_{f}(x)}{2}
.
\end{align*}
\end{proof}

\begin{proof}[Proof of Lemma~\ref{lem:cell-rec}]
The claims in
(\ref{eq:heavy-big})
and
(\ref{eq:heavy-small})
are standard applications of multiplicative Chernoff bounds
\citep[Theorem 1.1]{DBLP:books/daglib/0025902}.
To prove (\ref{eq:light-big}), observe that
\beq
\P\paren{
\sum_{B\in\Pi_0}\mu(B)\ge 2q
}
&\le&
\P\paren{
\sum_{B\in\Pi_0}(\mu(B)-\mu_n(B))\ge q
}\\
&\le&
\P\paren{
\sum_{B\in\Pi}|\mu(B)-\mu_n(B)|\ge q
}.
\eeq
To bound the latter,
define the random variable
$J_n:=\sum_{B\in\Pi}|\mu(B)-\mu_n(B)|$.
It follows from
\citet[Eqs. (5) and (17)]{Berend2013}
that $\E[J_n]\le\sqrt{m/n}$
and
\beq
\P(J_n\ge q) &\le&
\P(J_n\ge \E[J_n]+(q-\sqrt{m/n}))\\
&\le&
\exp[-n(q-\sqrt{m/n})^2/2],
\qquad nq^2\ge m,
\eeq
which completes the proof.
\end{proof}

\begin{proof}[Proof of Lemma~\ref{lem:max-alg-lip}]
We only prove the first inequality,
since the second one is immediate
from (\ref{eq:blasw}).
It follows directly from the definition
of $\bLaw_y(\mu_n,\X_n)$
that 
\beq
\mu_n(M_y(L))\ge\alpha
\implies
L\le \alpha\inv \bLaw_y(\mu_n,\X_n).
\eeq

The algorithm removes the 
$\floor{\eps n}$ points with the largest $\La_y(x,\X_n)$ from the set $\X_n$ 
and it is a basic fact that
\beq
{\eps n}
\le 2
\floor{\eps n},
\qquad
\eps>0, n\ge\eps\inv.
\eeq
This corresponds to removing a mass of
$\alpha\ge\e/2$ points,
and so
for 
$n \geq {\eps\inv}$,
\beqn
\label{eq:lmax}
\max_{x \in \Xec}
\La_y(x,\Xec) \leq \frac{2\bLaw_y(\mu_n,\X_n)}{\eps}.
\eeqn
Finally, Corollary~\ref{cor:PMSE-prop}(iii,iv)
implies
\beq
\lip{f} = 
\lip{\evalat{f}{V}} 
\leq \lip{\evalat{f}{\Xec}} \leq \frac{2\bLaw_y(\mu_n,\X_n)}{\eps}
.
\eeq
\end{proof}

\section{Generalization}
\label{sec:gen}
{\em Generalization guarantees} refer to claims
bounding the true risk in terms of the empirical risk,
plus confidence and hypothesis complexity terms.
Throughout this paper, we assume that the learner has
a fixed
maximal allowable average slope $L$.
This assumption incurs no loss of generality,
since a standard technique, known as Structural Risk Minimization
(SRM), \citep{DBLP:journals/tit/Shawe-TaylorBWA98},
creates a nested family of function classes with increasing
$L$, allowing the learner to select one
based on the sample, so as to optimize
the underfit-overfit tradeoff.

\subsection{Uniform Glivenko-Cantelli}
We begin by dispensing with some measure-theoretic
technicalities. Our learner constructs a hypothesis
$f:\X\to[0,1]$ via the \extname\ extension, which,
by Corollary~\ref{cor:PMSE-prop}(iii), is a
Lipschitz function.
Thus, operationally, the learner's function class is
\beqn
\label{eq:HL}
\H_L = 
\barLw_L(\X,\rho,\mu)
\cap
\set{f:\X\to\R; \lip{f}<\infty}.
\eeqn
Our assumptions that $\ddim(\X),\diam(\X)<\infty$
imply that $(\X,\rho)$ has compact closure;
this in turn implies a countable
$\F\subset\H_L$ such that every member
of $\H_L$ is a pointwise limit of a sequence in $\F$.
This suffices 
\citep{dudley1999uniform}
to ensure the measurability
of the empirical process
$
\sup_{f\in\H_L}
\paren{
\int_\X f\d\mu
-
\int_\X f\d\mu_n
}
.
$

A standard method for bounding 
this empirical process
is via the Rademacher complexity
(see, e.g., \citet[Theorem 3.1]{mohri-book2012}):
with probability at least $1-\delta$,
\beqn
\label{eq:mohri}
\sup_{f\in\H_L}
\paren{
\int_\X f\d\mu
-
\int_\X f\d\mu_n
}
&\le&
2\radem_n(\H_L|\Xn)+3\sqrt{\frac{\log(2/\delta)}{2n}},
\eeqn
where $\Xn=(X_1,\ldots,X_n)\sim\mu^n$ and
\beq
\radem_n(\H_L|\Xn) :=
\E_{\sigma\sim
  \mathrm{Uniform}(\set{-1,1}^n)}
\sup_{f\in\H_L}\frac1n\sum_{i=1}^n \sigma_if(X_i).
\eeq

Finally, an elementary estimate on Rademacher complexity is in terms of the empirical $\Lp2$ covering numbers (see, e.g., \citet{bartlett-notes}):
\beqn
\label{eq:bartlett}
\radem_n(\H_L|\Xn) \le \inf_{\eps>0}\eps + 
\sqrt{
  \frac{2\log
\covn(
\eps
,
\H_L
,
\Lp2(\mu_n) 
)}{n}}.
\eeqn

Invoking the estimate in Theorem~\ref{thm:emp-cov-num}
with $\alpha=L^{-1/6}$ and combining it with
(\ref{eq:mohri}, \ref{eq:bartlett})
yields that
\beqn
\label{eq:HL-radem}
\sup_{f\in\H_L}
\paren{
\int_\X f\d\mu
-
\int_\X f\d\mu_n
}
\le
\frac{C_\delta \sqrt{L}}
{n^{1/8d}}
+
\frac{C_\delta^{-d/2} \sqrt{2}} {n^{5/16}}
+
3\sqrt{\frac{\log(2/\delta)}{2n}}
\eeqn
holds with probability at least
$1-3\delta$,
where $d=\ddim(\X)$
and $C_\delta$ is a constant depending only on $\delta$ (assuming, to avoid trivialities, that $L,d\ge1$). We have not attempted to optimize
any of the constants.

\subsection{Risk bounds}
\label{sec:risk-bounds}
Recall our learning setup:
$\joint$ is an unknown distribution on $\X\times[0,1]$,
from which the learner receives a labeled sample
$S_n=(X_i,Y_i)_{i\in[n]}\sim\joint^n$.
Based on $S_n$, the learner constructs a
{\em hypothesis} $f:\X\to[0,1]$, to which
we associate the (true) risk
$R(f;\joint):=\E_{(X,Y)\sim\joint}|f(X)-Y|$
as well as the empirical risk $R(f;\joint_n)$.

We discuss regression and classification separately.

\paragraph{Regression.}
The learner selects a hypothesis
$f\in\H_L$ (seeking to minimize $R(f;\joint_n)$, but
this will not be used in our analysis). Then
\beq
\sup_{f\in\H_L}(R(f;\joint_n)-R(f;\joint))
&\le&
2\radem(\G|(X,Y)_{[n]})
+3\sqrt{\frac{\log(2/\delta)}{2n}}
\eeq
holds with probability at least $1-\delta$,
where $\G$ is the {\em loss class} consisting of
\beq
\G=\set{ 
g:(x,y)\mapsto\abs{f(x)-y};
x\in\X, y\in[0,1], f\in\H_L
}\subseteq [0,1]^{\X\times[0,1]}.
\eeq
To bound the Rademacher complexity
of $\G$, we notice that each $g\in\G$
is of the form $g=\phi\circ h$,
where $\phi(\cdot)=\abs{\cdot}$
and $h:(x,y)\mapsto f(x)-y$.
Since $\phi:\R\to\R$ is $1$-Lipschitz,
Talagrand's contraction principle 
\citep[Corollary 3.17]{LedouxTal91}
implies that
\beq
\radem(\G|(X,Y)_{[n]}) &\le&
\E_{\sigma}
\sup_{f\in\H_L}
\frac1n\sum_{i=1}^n \sigma_i(f(X_i)-Y_i)\\
&=&
\E_{\sigma}
\sup_{f\in\H_L}
\frac1n\sum_{i=1}^n \sigma_if(X_i)
=
\radem(\H_L|\Xn).
\eeq
Combining this with (\ref{eq:HL-radem})
yields
\beqn
\label{eq:regbounds}
\sup_{f\in\H_L}(R(f;\joint_n)-R(f;\joint))
\le
\frac{C_\delta \sqrt{L}}
{n^{1/8d}}
+
\frac{C_\delta^{-d/2} \sqrt{2}} {n^{5/16}}
+
3\sqrt{\frac{\log(2/\delta)}{2n}}
\eeqn
with probability at least $1-3\delta$.

\paragraph{Classification.}
For classification,
the learning setup is asymmetric with respect
to training and prediction (see the discussion at the beginning of
Section~\ref{sec:adv-ext-class}). The distribution
$\joint$ is over $\X\times\set{0,1}$, and again,
$S=(X_i,Y_i)_{i\in[n]}\sim\joint^n$ is presented to the learner. The latter produces a hypothesis
$f:\X\to[0,1]$, to which we associate the
{\em sample error},
\beqn
\label{eq:serr}
\serr(f) = \frac1n\sum_{i=1}^n \pred{f(X_i)\neq Y_i},
\eeqn
and the {\em generalization error},
\beqn
\label{eq:gerr}
\gerr(f) = \P_{(X,Y)\sim\joint}( 
\Int{f(X)}
\neq Y ).
\eeqn
Notice the asymmetry between $\serr$ and $\gerr$:
a hypothesis is penalized for every sample point
it fails to label correctly, but is only required
to ``be closer to the correct label'' at test time.

Given these definitions, a standard argument (via the {\em margin function} and Talagrand's contraction, see
\citet[Theorem 4.4]{mohri-book2012}),
yields
\beqn
\label{eq:class-gen}
\sup_{f\in\H_L}(\gerr(f)-\serr(f))
&\le&
\frac{C_\delta \sqrt{L}}
{n^{1/8d}}
+
\frac{C_\delta^{-d/2} \sqrt{2}} {n^{5/16}}
+
3\sqrt{\frac{\log(2/\delta)}{2n}}
\eeqn
with probability at least $1-3\delta$.

\section{Adversarial extension for regression, weak mean}
\label{sec:adv-ext-regr-weak-mean}
\newcommand{\lmin}{\ell_{\min}}
\newcommand{\lmax}{\ell_{\max}}
In this section, we prove the weak-mean counterparts (a.ii), (b.ii)
of the adversarial extension game for regression, defined in
Section~\ref{sec:adv-ext}. These results are not used in the paper and are included
for completeness and independent interest.
The following notation will be used
throughout:
\beq
\lmin &:=& \min_{x\in\X_n}\La_f(x,\X_n),\\
\lmax &:=& \max_{x\in\X_n}\La_f(x,\X_n)
.
\eeq
The function $f:\X\to[0,1]$
will refer exclusively to the one constructed by the 
``adversarial extension'' algorithm
in the proof of Lemma~\ref{lem:adv-ext-strong}.
We proceed to make some observations
regarding~$f$.

\begin{lemma}[$f$ is defect-free]
For all $\ell>0$,
the function $f$
is
$(\ell\e/2,\ell,1)$-defect-free,
as defined in
Section~\ref{sec:defects-defs}.
\end{lemma}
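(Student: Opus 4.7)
My plan is to exploit a single lever: because $f$ is the \extname\ extension from the $\e$-net $V\subseteq\Xec$, every local slope of $f$ at a point $x\in\X$ is realized (or, when $x\in V$, determined) by points of $V$, whose pairwise distances are bounded below by $\e$. This packing lower bound is precisely what will convert any local slope of at least $\ell$ into a jump exceeding $\ell\e/2$.

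Fix $x\in\X$ with $\La_f(x,\X)\ge\ell$; I need to exhibit an $\ell$-slope witness of $x$ at which $f$ differs from $f(x)$ by more than $\ell\e/2$. For $x\notin V$, Corollary~\ref{cor:PMSE-prop}(i) will provide a pair $u^*,v^*\in V$ for which $\La_f(x,\X)$ simultaneously equals $|f(x)-f(u^*)|/\rho(x,u^*)$, $|f(x)-f(v^*)|/\rho(x,v^*)$, and $|f(v^*)-f(u^*)|/(\rho(v^*,x)+\rho(x,u^*))$. Since $V$ is an $\e$-packing I have $\rho(u^*,v^*)>\e$, and the triangle inequality combined with the third identity will then give $|f(u^*)-f(v^*)|\ge \La_f(x,\X)\cdot\rho(u^*,v^*)>\ell\e$. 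A single application of~(\ref{eq:abc1/2}) to $f(x),f(u^*),f(v^*)$ yields $|f(x)-f(u^*)|\vee|f(x)-f(v^*)|>\ell\e/2$, and whichever of $u^*,v^*$ attains this maximum is automatically an $\ell$-slope witness, since by the equal-ratio identity both $u^*$ and $v^*$ already realize the local slope at $x$.

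For the edge case $x\in V$, I will bypass Remark~\ref{rem:PMSE-argmax} (whose pair $(u^*,v^*)$ degenerates when one coordinate equals $x$) and instead invoke Claims II and III from the proof of Theorem~\ref{thm:PMSE} to collapse $\La_f(x,\X)$ to $\La_f(x,V)$, which, $V$ being finite, is attained at some single $v\in V\setminus\{x\}$. The $\e$-packing property gives $\rho(x,v)>\e$ and thus $|f(x)-f(v)|=\La_f(x,\X)\cdot\rho(x,v)>\ell\e>\ell\e/2$, so $v$ itself serves as the required $\ell$-slope witness. I do not anticipate a genuine obstacle here; the only subtlety worth flagging is precisely this routing difference between the two cases, together with the vacuous trivial cases $|V|\le 1$ where $\La_f\equiv 0$ and the hypothesis $\La_f(x,\X)\ge\ell>0$ is never met.
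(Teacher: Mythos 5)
Your proof is correct and follows essentially the same route as the paper's: both hinge on Corollary~\ref{cor:PMSE-prop}(i) together with the $\e$-separation of the net points $u^*,v^*$, the paper bounding $\max\{\rho(x,u^*),\rho(x,v^*)\}\ge\e/2$ directly where you pass through $|f(u^*)-f(v^*)|>\ell\e$ and (\ref{eq:abc1/2}). Your separate treatment of $x\in V$ and of the trivial case $|V|\le 1$ is harmless extra care that the paper's one-line proof omits.
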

\begin{proof}
By 
construction,
$f$ is the \extname~of an $\e$-net $V$. 
Let 
$u^*(x)$
and
$v^*(x)$
be as in Remark~\ref{rem:PMSE-argmax}.
Suppose that $\La_f{(x,\X)} \geq \ell$. Then
\beq
&&
\max\{|f(x)-f(u^*(x))|,|f(x)-f(v^*(x))|\}\\
&\geq&
\ell\cdot
\max\{\rho(x,u^*(x)),\rho(x,v^*(x))\}
\geq
\ell\cdot\frac{\e}{2}
=
\ell\e/2
.
\eeq
\end{proof}

\begin{lemma}[Combinatorial structure]
\label{lem:comb-weak}
For any $t>0$
and $0<q<1$,
we have
\beqn
\label{eq:weak-comb-struct}
\mu(M_f(t)) \leq 2\mu_n(M_f(t/4)) + \frac{8m}{n^{1/3}}\log(m/\delta)
,
\eeqn
with probability at least $1-2\delta$,
where
$m \leq (8/\e)^{d}$.
\end{lemma}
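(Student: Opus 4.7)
The plan is to partition the ambient space into small Voronoi cells on which the local slope of $f$ is multiplicatively almost constant, and then compare $\mu$- and $\mu_n$-masses cellwise; the combinatorial content is supplied by Lemma~\ref{lem:pose-smooth} and the probabilistic content by Lemma~\ref{lem:cell-rec}.

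First, I would construct an $\e/4$-net of $\X$ and let $\Pi$ be the induced Voronoi partition, so that $m := |\Pi| \le (8/\e)^d$ by the doubling property and every cell $B \in \Pi$ has $\diam(B) \le \e/2$. Recall that $f$ is the \extname\ extension of $y$ from an $\e$-net $V \subseteq \Xec$, and any $\e$-net is also an $\e$-packing, so the minimum interpoint distance in $V$ strictly exceeds $\e \ge 2\diam(B)$. This puts us in the hypothesis of Lemma~\ref{lem:pose-smooth} with $A = V$ and $E = B$, so for every $x, x' \in B$ we have $\La_f(x,\X) \le 2\La_f(x',\X)$. In particular, if $B \cap M_f(t) \ne \emptyset$ then $B \subseteq M_f(t/2) \subseteq M_f(t/4)$. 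Setting $\mathcal{B} := \{B\in\Pi : B \cap M_f(t)\ne\emptyset\}$ and $U_f := \bigcup\mathcal{B}$, we obtain $M_f(t) \subseteq U_f$ together with $\mu_n(U_f) \le \mu_n(M_f(t/4))$.

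Second, I would split $\Pi$ into light cells $\Pi_0 := \{B : \mu(B) < q/m\}$ and heavy cells $\Pi_1 := \Pi \setminus \Pi_0$, for a parameter $q$ to be tuned. With probability at least $1-\delta$, Lemma~\ref{lem:cell-rec}(\ref{eq:heavy-big}) yields $\mu(B) \le 2\mu_n(B)$ for every $B\in\Pi_1$; with probability at least $1-\delta$, (\ref{eq:light-big}) yields $\sum_{B\in\Pi_0}\mu(B) \le 2q$. Splitting $\mathcal{B}$ into its heavy and light parts and chaining gives
\begin{align*}
\mu(M_f(t)) \le \mu(U_f)
&= \sum_{B\in\mathcal{B}\cap\Pi_1}\mu(B) + \sum_{B\in\mathcal{B}\cap\Pi_0}\mu(B) \\
&\le 2\sum_{B\in\mathcal{B}\cap\Pi_1}\mu_n(B) + 2q
\le 2\mu_n(U_f) + 2q
\le 2\mu_n(M_f(t/4)) + 2q,
\end{align*}
and a union bound over the two probabilistic events yields the claim with probability at least $1-2\delta$.

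The main obstacle is choosing $q$ so that $2q$ coincides with the stated error $\tfrac{8m}{n^{1/3}}\log(m/\delta)$ while simultaneously satisfying the heavy-cell concentration requirement $m\exp(-nq/(4m)) \le \delta$. The efficient choice $q \asymp (m/n)\log(m/\delta)$ actually produces a strictly sharper error of order $(m/n)\log(m/\delta)$; reproducing the weaker $n^{-1/3}$ exponent in the statement appears to come from balancing $q$ against other parameters needed in the downstream weak-mean extension argument rather than from any inherent obstruction in this lemma. Beyond this bookkeeping, the remainder is a direct combination of Lemmas~\ref{lem:pose-smooth} and~\ref{lem:cell-rec}.
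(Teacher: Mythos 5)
Your proof is correct, and the probabilistic half (light/heavy dichotomy plus Lemma~\ref{lem:cell-rec}, then tune $q$) is exactly the paper's. Where you genuinely diverge is in the combinatorial half: the paper obtains the inclusion $M_f(t)\subseteq U_f\subseteq M_f(t/4)$ by first proving that $f$, as the \extname\ of an $\e$-net, is $(\ell\e/2,\ell,1)$-defect-free and then invoking the general structure result Lemma~\ref{lem:M_f(ell)} with its own $\e/2$-net partition, whereas you apply Lemma~\ref{lem:pose-smooth} directly with $A=V$ and $E=B$ (legitimate, since $\diam(B)\le\e/2<\tfrac12\min_{x\ne x'\in V}\rho(x,x')$), getting the slightly stronger $U_f\subseteq M_f(t/2)$ on the same $\e/4$-net partition used for the measure-concentration step. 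Your route is more direct and avoids juggling two different nets, at the cost of being specific to \extname-type extensions; the paper's route is what generalizes to arbitrary defect-free functions (as needed in Section~\ref{sec:defects}), but is overkill here. Two small wrinkles in your write-up: you define the light cells by $\mu(B)<q/m$ rather than the paper's empirical condition, which actually makes $\sum_{B\in\Pi_0}\mu(B)\le q$ hold deterministically, so citing (\ref{eq:light-big}) is unnecessary (and slightly mismatched) under your definition --- either convention works, but pick one consistently. And your closing remark that $q\asymp (m/n)\log(m/\delta)$ is the ``efficient'' choice overlooks that (\ref{eq:light-big}) as stated requires $nq^2\ge m$, i.e.\ $q\gtrsim\sqrt{m/n}$; since any admissible larger $q$ only weakens the conclusion while strengthening the probability guarantee, plugging in the paper's $q=(4m/n^{1/3})\log(m/\delta)$ yields the stated bound verbatim, and the side remark does not affect correctness.
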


\begin{lemma}[Bounded local slope ratio]
\label{lem:slope-rat}
\beq
\max_{x \neq x' \in \X}{\frac{\La_f(x,\X)}{\La_f(x',\X)}}
&\leq&
\frac{
2\diam(\X)}
{\e}
.
\eeq
\end{lemma}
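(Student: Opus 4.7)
The plan is to reduce the claim to a direct application of Corollary~\ref{cor:PMSE-prop}(i). By construction in the proof of Lemma~\ref{lem:adv-ext-strong}, the function $f$ is the \extname\ of $y$ from $V$ to $\X$, where $V\subseteq\Xec$ is an $\e$-net of $\Xec$. Fix arbitrary distinct $x,x'\in\X$. Corollary~\ref{cor:PMSE-prop}(i) provides a maximizing pair $(u^*,v^*)=(u^*(x),v^*(x))\in V^2$ such that
\beq
\La_f(x,\X) \;=\; \frac{|f(v^*)-f(u^*)|}{\rho(v^*,x)+\rho(x,u^*)}.
\eeq
If either $u^*=v^*$ or $f(u^*)=f(v^*)$, then $\La_f(x,\X)=0$ and the ratio bound is trivial, so assume the non-degenerate case.

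The upper bound on the numerator comes from the packing property of the $\e$-net: distinct points of $V$ satisfy $\rho(u^*,v^*)>\e$, so the triangle inequality yields $\rho(v^*,x)+\rho(x,u^*)\ge\rho(u^*,v^*)>\e$, giving
\beq
\La_f(x,\X) \;\le\; \frac{|f(v^*)-f(u^*)|}{\e}.
\eeq
The lower bound on $\La_f(x',\X)$ reuses the same pair $(u^*,v^*)$: since $\La_f(x',\X)$ equals the supremum of $R_{x'}(\cdot,\cdot)$ over $V^2$ (again by Corollary~\ref{cor:PMSE-prop}(i)), we have
\beq
\La_f(x',\X) \;\ge\; R_{x'}(u^*,v^*) \;=\; \frac{|f(v^*)-f(u^*)|}{\rho(v^*,x')+\rho(x',u^*)} \;\ge\; \frac{|f(v^*)-f(u^*)|}{2\diam(\X)}.
\eeq
Dividing the two estimates, the common factor $|f(v^*)-f(u^*)|$ cancels and the claimed bound $2\diam(\X)/\e$ falls out.

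There is no real obstacle here; the proof is short and essentially bookkeeping. The one conceptual point worth flagging is that the same pair $(u^*,v^*)$ that realizes the slope at $x$ serves as a (generally suboptimal but nonetheless valid) slope-witness at $x'$. This recycling is exactly what allows the $|f(v^*)-f(u^*)|$ factor to cancel, so the argument never needs any a priori lower bound on the variation of $f$ on $V$ --- the ratio bound depends only on the geometry of the net ($\e$-separation) and of the ambient space ($\diam(\X)$).
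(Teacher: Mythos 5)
Your proof is correct and follows essentially the same route as the paper's: both arguments recycle a single $\e$-separated witness pair from the net $V$ across different points of $\X$, upper-bounding the relevant distance sum by $2\diam(\X)$ and lower-bounding it by $\e$. The only cosmetic difference is that you take the pair realizing $\La_f(x,\X)$ via Corollary~\ref{cor:PMSE-prop}(i), whereas the paper takes the pair realizing $\lip{f}$ via Corollary~\ref{cor:PMSE-prop}(iii) and passes through the trivial bound $\La_f(x,\X)\le\lip{f}$.
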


Proofs of Lemmas~\ref{lem:comb-weak} and \ref{lem:slope-rat} are deferred to the end of this section.

\begin{lemma}[$f$ is close to $y$ in weak mean]
\label{lem:a.ii}
The ``adversarial extension'' function $f$
satisfies (a.ii) for $0<\e<1$.
\end{lemma}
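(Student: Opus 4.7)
The plan is to reuse verbatim the construction of $f$ from the strong-mean case of Lemma~\ref{lem:adv-ext-strong}: let $\Xe$ consist of the $\floor{\e n}$ points of $\X_n$ with largest local slope $\La_y(\cdot,\X_n)$, set $\Xec := \X_n \setminus \Xe$, let $\enet$ be an $\e$-net of $\Xec$, and define $f$ as the \extname\ of $y$ from $\enet$ to $\X$. The decomposition (\ref{eq:a.i-decomp}) and the triangle-inequality calculation culminating in (\ref{eq:a.i-2}) carry over without change, so it suffices to bound $\frac{2\e}{n}\sum_{x \in \Xec \setminus \enet}\La_y(x,\X_n)$ by $\tilde O(\e)(1 \vee \bLaw_y(\mu_n,\X_n))$. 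The identity $\frac{1}{n}\sum_{x\in\X_n}\La_y(x,\X_n) = \bLas_y(\mu_n,\X_n)$ that closed the strong-mean argument is not available here, since $\bLas_y$ can be arbitrarily larger than $\bLaw_y$; this gap is where the real work lies.

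The key new observation is that the aggressive truncation defining $\Xe$ caps the local slopes surviving in $\Xec$. Let $\La^* := \max_{x \in \Xec}\La_y(x,\X_n)$ and $\bar L := \bLaw_y(\mu_n,\X_n)$. Every $x' \in \Xe$ satisfies $\La_y(x',\X_n) \ge \La^*$, so the level set $M_y(\La^*)$ has $\mu_n$-mass at least $\floor{\e n}/n \ge \e/2$ (valid for $n \ge 2/\e$), and the definition of the weak mean in (\ref{eq:blaw}) then gives $\La^* \le 2\bar L/\e$. Combined with the pointwise estimate $\mu_n(M_y(t)) \le \min\{1,\bar L/t\}$, a layer-cake calculation yields
\begin{align*}
\frac{1}{n}\sum_{x \in \Xec}\La_y(x,\X_n)
&\le \int_0^{\La^*}\mu_n(M_y(t))\,dt \\
&\le \bar L + \int_{\bar L}^{\La^*}\frac{\bar L}{t}\,dt
\le \bar L\bigl(1 + \log(2/\e)\bigr),
\end{align*}
where the case $\La^* \le \bar L$ is absorbed by the trivial bound $\int_0^{\La^*} 1\,dt \le \bar L$.

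Multiplying through by $2\e$ and combining with the $\e$ bound on the first term of (\ref{eq:a.i-decomp}) gives $\nrm{f-y}_{\Lp{1}(\mu_n)} \le \e + 2\e \bar L\bigl(1+\log(2/\e)\bigr) \le \tilde O(\e)(1 \vee \bar L)$, as required. The main obstacle is conceptual rather than technical: one must notice that discarding the top-$\floor{\e n}$-slope points quantitatively caps $\La^* \lesssim \bar L/\e$, so that the logarithmic loss inherent in converting weak-mean control into an integral bound over $[0,\La^*]$ costs only a benign $\log(1/\e)$ factor. Once this is in hand, the rest of the argument is a direct transplant of the strong-mean proof.
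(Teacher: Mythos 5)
Your proposal is correct and follows essentially the same route as the paper: the same decomposition (\ref{eq:a.i-decomp}) and reduction to $\frac{2\e}{n}\sum_{x\in\Xec}\La_y(x,\X_n)$, the same cap $\La^*\le 2\bLaw_y(\mu_n,\X_n)/\e$ obtained from the mass $\ge\e/2$ of the discarded top-slope points (this is the paper's (\ref{eq:lmax}) from Lemma~\ref{lem:max-alg-lip}), and the same layer-cake integral split at $\bLaw_y(\mu_n,\X_n)$ yielding the $\log(2/\e)$ factor. The paper phrases the sum as an integral against the uniform measure on $\Xec\setminus\enet$ before truncating, but the computation and the resulting bound $\e+2\e\bLaw_y(\mu_n,\X_n)(1+\log(2/\e))$ are identical.
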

\begin{proof}
We begin with the same decomposition
as in (\ref{eq:a.i-decomp}):
\beq
\nrm{f-y}_{\Lp{1}(\mu_n)}
&=&
\frac{1}{n}\sum_{x\in
\Xe
\setminus\enet
}|f(x)-y(x)|
+
\frac{1}{n}\sum_{x\in\Xec
\setminus\enet
}|f(x)-y(x)|
\eeq
and, as above, bound the first term by $\e$
and the second term as in 
(\ref{eq:a.i-2}):
\beq
\frac{1}{n}\sum_{x\in\Xec
\setminus\enet
}|f(x)-y(x)|
&\le&
\frac{2\e}{n}
\sum_{x\in\Xec\setminus\enet}
\La_y(x,\X_n)
\\
&=&
2\e\frac{|\Xec\setminus\enet|}n
\int_{\Xec\setminus\enet}
\La_y(x,\X_n)
\d\bar\mu(x)
,
\eeq
where $\bar\mu$ is
is the uniform measure on
$\Xec\setminus\enet$,
given by
$\bar\mu(x)=
n|\Xec\setminus\enet|\inv\mu_n(x)$.
Now
\beq
\frac{|\Xec\setminus\enet|}n
\int_{\Xec\setminus\enet}
\La_y(x,\X_n)
\d\bar\mu(x)
&=&
\frac{|\Xec\setminus\enet|}n
\int_0^\infty
\bar\mu(\set{x\in \Xec\setminus\enet:\La_y(x,\X_n)>t})\d t\\
&=&
\int_0^\infty
\mu_n(\set{x\in \Xec\setminus\enet:\La_y(x,\X_n)>t})\d t\\
&=&
\int_0^{\beta}
\mu_n(\set{x\in \Xec\setminus\enet:\La_y(x,\X_n)>t})\d t\\
&\le&
\alpha+
\bLaw_y(\mu_n,\X_n)
\int_\alpha^\beta
\frac{\d t}t,
\eeq
where $\alpha>0$ is arbitrary
and $\beta=
\frac{2\bLaw_y(\mu_n,\X_n)}{\eps}
$, in light of (\ref{eq:lmax}).
The integral evaluates to $\log(\beta/\alpha)$
and our choice $\alpha:=\bLaw_y(\mu_n,\X_n)$
yields the estimate
\beq
\frac{1}{n}\sum_{x\in\Xec
\setminus\enet
}|f(x)-y(x)|
&\le&
2\e(\alpha+\alpha\log\frac2\e)
=\tilde O(\e)\bLaw_y(\mu_n,\X_n)
.
\eeq
\end{proof}

\begin{lemma}[Satisfying (b.ii)]
\label{lem:b.ii}
For $0<\e,\delta<1$,
the adversarial extension function
$f$ satisfies
\beq
\bLaw_f(\mu,\X)
&\le&
16\bLaw_f(\mu_n,\X_n)
+
64
\bLaw_y(\mu_n,\X_n)
\frac{m
}{\e n^{1/3}}\log\left(\frac{m\log
(2/\e)}{\delta}\right)
\eeq
with probability at least $1-2\delta$,
where
$m\le(8/\eps)^{d}$.
\end{lemma}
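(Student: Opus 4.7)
The plan is to bound $\bLaw_f(\mu,\X)=\sup_{t>0}t\mu(M_f(t))$ by combining the combinatorial comparison in Lemma~\ref{lem:comb-weak} with the slope-ratio controls from Lemmas~\ref{lem:slope-rat}~and~\ref{lem:max-alg-lip}. The key preliminary observation is that for sample points the ambient and empirical local slopes coincide, so the empirical quantity $t\mu_n(M_f(t))$ is automatically controlled by $\bLaw_f(\mu_n,\X_n)$. Indeed, since $V\subseteq\Xec\subseteq\X_n$ and $f$ is the \extname\ of $y$ from $V$, Corollary~\ref{cor:PMSE-prop}(i) gives $\La_f(x,\X)=\La_f(x,V)\le\La_f(x,\X_n)\le\La_f(x,\X)$ for every $x\in\X_n$, so $\La_f(x,\X)=\La_f(x,\X_n)$. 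Therefore $M_f(t)\cap\X_n=\{x\in\X_n:\La_f(x,\X_n)\ge t\}$, and consequently $t\mu_n(M_f(t))\le\bLaw_f(\mu_n,\X_n)$ for all $t>0$.

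Next I would set $L_{\max}:=\lip{f}$ and $L_{\min}:=\inf_{x\in\X}\La_f(x,\X)$. By Lemma~\ref{lem:max-alg-lip}, $L_{\max}\le 2\bLaw_y(\mu_n,\X_n)/\e$, and by Lemma~\ref{lem:slope-rat}, $L_{\max}/L_{\min}\le 2/\e$ (using $\diam(\X)\le1$). Introduce the dyadic grid $t_k:=L_{\max}/2^k$ for $k=0,1,\ldots,K$ with $K:=\lceil\log_2(2/\e)\rceil$, so that $t_K\le L_{\min}$. Since Lemma~\ref{lem:comb-weak} holds at any fixed $t$ with probability $\ge 1-2\delta$, I would apply it at each of the $K+1$ values $t_k$ after replacing the confidence parameter by $\delta/(K+1)$ and taking a union bound. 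On the resulting event (which has probability at least $1-2\delta$),
\begin{equation*}
t_k\mu(M_f(t_k))\le 2t_k\mu_n(M_f(t_k/4))+\frac{8m\,t_k}{n^{1/3}}\log\!\frac{m(K+1)}{\delta}
\le 8\bLaw_f(\mu_n,\X_n)+\frac{16m\,\bLaw_y(\mu_n,\X_n)}{\e n^{1/3}}\log\!\frac{m(K+1)}{\delta},
\end{equation*}
where the first summand was rewritten as $8\cdot(t_k/4)\mu_n(M_f(t_k/4))\le 8\bLaw_f(\mu_n,\X_n)$ using the preliminary observation, and $t_k\le L_{\max}$ was combined with Lemma~\ref{lem:max-alg-lip} in the second summand.

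To pass from the grid back to all $t>0$, I would use monotonicity of $\mu(M_f(\cdot))$: for $t\in(t_{k+1},t_k]$ one has $t\mu(M_f(t))\le t_k\mu(M_f(t_{k+1}))=2t_{k+1}\mu(M_f(t_{k+1}))$, costing a factor of two. For $t>L_{\max}$, $\mu(M_f(t))=0$; for $t\le t_K\le L_{\min}$, $t\mu(M_f(t))=t\le t_K$, which is already absorbed by the bound at $t_{K-1}$. Taking the supremum over $t>0$ and absorbing constants then delivers
\begin{equation*}
\bLaw_f(\mu,\X)\le 16\bLaw_f(\mu_n,\X_n)+64\,\bLaw_y(\mu_n,\X_n)\,\frac{m}{\e n^{1/3}}\log\!\frac{m\log(2/\e)}{\delta},
\end{equation*}
as claimed.

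\textbf{Main obstacle.} The crux is converting Lemma~\ref{lem:comb-weak}, which is a pointwise-in-$t$ probabilistic statement, into a uniform-in-$t$ bound without blowing up the failure probability beyond $2\delta$ or the logarithmic factor. The critical tool making this possible is Lemma~\ref{lem:slope-rat}: without it, the range $[L_{\min},L_{\max}]$ of nontrivial $t$'s could be arbitrarily large and a dyadic cover would fail to be of polylogarithmic size; with it, the cover has only $O(\log(1/\e))$ scales, which is exactly what appears inside the log of the target bound. The second subtle point is making sure the ``$\bLaw_f(\mu_n,\X_n)$'' in the final inequality refers to the empirical weak mean of $f$, not a mixed ambient-empirical quantity; this is what the identity $\La_f(x,\X)=\La_f(x,\X_n)$ for $x\in\X_n$ takes care of.
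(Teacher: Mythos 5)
Your proof is correct and takes essentially the same route as the paper's: a union bound over Lemma~\ref{lem:comb-weak} applied on a dyadic grid of scales, with Lemma~\ref{lem:slope-rat} confining the grid to $O(\log(1/\e))$ levels (which is what produces the $\log(2/\e)$ inside the logarithm), Lemma~\ref{lem:max-alg-lip} bounding the additive term via $\lip{f}\le 2\e\inv\bLaw_y(\mu_n,\X_n)$, and a factor-of-two monotonicity argument for off-grid $t$. The only cosmetic difference is that the paper uses a countable doubly-indexed grid $\tau_{ij}=2^{i-j}$ with weights $\delta 2^{-i-j}$ and truncates to the relevant range afterwards, whereas you fix a finite grid from the outset; your explicit verification that $\La_f(x,\X)=\La_f(x,\X_n)$ for $x\in\X_n$ makes precise a step the paper uses implicitly.
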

\begin{proof}
Fix a 
$\delta>0$
and put $\delta_{ij}:=\delta2^{-i-j}$;
then $
\sum_{i=1}^\infty
\sum_{j=1}^\infty
\delta_{ij}
=\delta$.
Define also
$\tau_{ij}=2^{i-j}$
for $i,j\in\set{1,2,\ldots}$.
Invoking Lemma~\ref{lem:comb-weak} with the union bound,
we have:
\beqn
\label{eq:tauij}
\P\paren{
\exists \tau_{ij}
:
\mu(M_f(\tau_{ij})) 
> 
2\mu_n(M_f(\tau_{ij}/4)) + \frac{8m}{n^{1/3}}\log\frac{m2^{i+j}}{\delta}
}
\le2\delta.
\eeqn
For $t<\lmin$, we have
\beqn
\label{eq:tlmin}
t\mu(M_f(t))\le t=t\mu_n(M_f(t))
\eeqn
and for $t>\lmax$, 
recalling from Lemma~\ref{lem:max-alg-lip}
that $\lip{f}
=
\lmax
\le
2\e\inv \bLaw_y(\mu_n,\X_n)
$,
we have
\beqn
\label{eq:tlmax}
t\mu(M_f(t))=0=t\mu_n(M_f(t)).
\eeqn
For 
any other
$t>0$, define $\tau(t)$ to be the largest
$\tau_{ij}\le t$.
Then, for $t\in[\lmin,\lmax]$,
\beq
t\mu(M_f(t)) &\le&
2\tau(t)\mu(M_f(\tau(t))) \\
&\le_p& 
4\tau(t)\mu_n(M_f(\tau(t)/4)) +
16\tau(t)\frac{m}{n^{1/3}}\log\frac{m\log(\lmax/\lmin)}{\delta}\\
&\le&
16(\tau(t)/4)\mu_n(M_f(\tau(t)/4)) +
32\lmax\frac{m}{n^{1/3}}\log\frac{m\log(\lmax/\lmin)}{\delta}\\
&\le&
16\bLaw_f(\mu_n,\X_n)
+
64
\e\inv \bLaw_y(\mu_n,\X_n)
\frac{m
}{n^{1/3}}\log\left(\frac{m\log
(2/\e)}{\delta}\right),
\eeq
where 
$\le_p$ holds with probability at least
$1-\delta$
and
Lemma~\ref{lem:slope-rat} was invoked
in the last inequality.
The claim follows.

\end{proof}

\paragraph{Defered proofs.}

\begin{proof}[Proof of Lemma~\ref{lem:comb-weak}]
Let $\Pi$ be the partition defined in Lemma~\ref{lem:M_f(ell)} and let $\Pi = \Pi_0 \cup \Pi_1$ be the dichotomy of $\Pi$ into light and heavy cells as in the proof of Lemma~\ref{lem:adv-ext-strong}. Put 
$U_f = U_{0} \cup U_{1}$, where:
\beq
U_{0} &=& \set{B \in \Pi_0: 
B\cap M_f(t)\neq\emptyset 
},
\\ 
U_{1} &=& \set{B \in \Pi_1 :
B\cap M_f(t)\neq\emptyset 
}.
\eeq
Then
\beq
\mu(M_f(t)) &=& 
\sum_{B \in \Pi_0}{\mu(B\cap M_f(t))} + \sum_{B \in \Pi_1}{\mu(B\cap M_f(t))}
\\
&\leq&
\sum_{B \in U_{0}}{\mu(B)} + 
\sum_{B \in U_{1}}{\mu(B)}
\\
&\le_p&
(2\mu_n(\cup U_{0}) + 2q)
+ 2\sum_{B \in U_{1}}{\mu_n(B)}
\\
&\leq&
2\mu_n(\cup U_{0})
+
2\mu_n(\cup U_{1})
+ 2q
\\
&=&
2\mu_n(U_f)
+ 2q
\\
&\leq&
2\mu_n(M_f(t/4)
+ 2q,
\eeq
where 
$\le_p$
follows from
(\ref{eq:heavy-big},
\ref{eq:light-big}) 
and the final inequality
from
Lemma~\ref{lem:M_f(ell)}.
Hence the bound holds with probability at least 
$
1-
\left[
m
\exp
\left(
-\frac{nq}{4m}
\right)
+
\exp
\left(
-\frac{m+nq^2}{2}+q\sqrt{mn}
\right)
\right]
$.
Choosing $q = (4m/n^{1/3})\log(m/\delta)$, we have that (\ref{eq:weak-comb-struct}) holds with probability at least
\beq
&&
1-
m\left(\frac{\delta}{m}\right)^{n^{2/3}}
-
\exp
\left(
-\frac{m+n(4m/n^{1/3})^2\log(m/\delta)^2}{2}+(4m/n^{1/3})\log(m/\delta)\sqrt{mn}
\right)\\
&\ge&
1 - \delta -
\exp(-m/2)
\cdot
\left(\frac{\delta}{m}\right)^
{
8m^2\log{(m/\delta)}n^{1/3}
-
4m^{3/2}\log{(m/\delta)}n^{1/6}
}
\\
&=&
1 - \delta -
\exp(-m/2)
\cdot
\left(\frac{\delta}{m}\right)^
{
\log(m/\delta)n^{1/6}
(
8m^2n^{1/6}
-
4m^{3/2}
)
}
\\
&\ge&
1-2\delta
.
\eeq
\end{proof}

\begin{proof}[Proof of Lemma~\ref{lem:slope-rat}]
By 
Corollary~\ref{cor:PMSE-prop}(iii), we may assume that for some $u,v \in V$,
\beq
\lip{f} = \frac{|f(u)-f(v)|}{\rho(u,v)}.
\eeq
Since $\rho(u,v) \geq \e$, 
we have,
for any $x \in \X$,
\beq
\La_f{(x,\X)}
&\geq&
\max{\bigg\{\frac{|f(x)-f(u)|}{\rho(x,u)} , \frac{|f(x)-f(v)|}{\rho(x,v)}\bigg\}}
\\
&\geq&
\max{\bigg\{\frac{|f(x)-f(u)|}{\diam(\X)} , \frac{|f(x)-f(v)|}{\diam{(\X)}}\bigg\}}
\\
&\geq&
\frac{|f(u)-f(v)|}{2\diam{(\X)}}
\\
&=&
\lip{f} \cdot \frac{\rho(u,v)}{2\diam{(\X)}}
\geq
\lip{f} \cdot \frac{\e}{2\diam{(\X)}}
.
\eeq
\end{proof}

\section{Illustrative examples and discussion}
\label{sec:disc}

\paragraph{Savings of average over worst-case.}

For $\gamma\in(0,1/2)$, consider the metric space
$\X=[0,1/2-\gamma]\cup[1/2+\gamma,1]$
equipped with the
standard metric $\rho(x,x')=\abs{x-x'}$,
the uniform distribution $\mu$,
and $f:\X\to\R$ given by
the step function $f(x)=\pred{x>1/2}$.
Then $\lip{f}=1/(2\gamma)$ and
\beqn
\label{eq:illustr1}
\bLas_f(\X,\rho,\mu)
=
\frac1{1-2\gamma}
\int_{[0,1/2-\gamma]\cup[1/2+\gamma,1]}\frac{1}{\abs{x-1/2}+\gamma}\d x
=
\frac2{1-2\gamma}
\log\left(\frac{1+2\gamma}{4\gamma}\right).
\eeqn
For small $\gamma$, we have
$\lip{f}=\Theta(\gamma\inv)$
and
$\bLas_f=\Theta(\log\gamma\inv)$,
so even the cruder strong average smoothness measure provides
an exponential savings over the worst-case one.
This example has natural higher-dimensional analogues
(i.e., $\X=[0,1]^{d-1}\times([0,1/2-\gamma]\cup[1/2+\gamma,1])$), where
the phenomenon persists.

An analogous behavior is exhibited by the ``margin loss'' function
$f(x)=\max\set{1,\min\set{0,1-x/\gamma}}$
defined on $([0,1],\abs{\cdot},\mu)
$, where $\mu$ is the uniform distribution on $[0,1]$.
In this case, $\lip{f}=1/\gamma$, while
\beqn
\label{eq:illustr2}
\bLas_f([0,1],\abs{\cdot},\mu)
=
\gamma\cdot\frac1\gamma
+
(1-\gamma)\int_\gamma^1\frac{1}{x}\d x
=1+(1-\gamma)\log\frac{1}{\gamma}
=\Theta\paren{\log\frac1\gamma}
\eeqn
--- again, an exponential savings.

For a more dramatic gap between the two measures,
consider the family of functions $f_p(x)=x^p$ for $p\in(0,1)$,
on $\X=[0,1]$ with the uniform distribution $\mu$ and the standard metric $\rho$.
These all have $\lip{f_p}=\infty$, while
\beqn
\label{eq:illustr3}
\bLas_f([0,1],\abs{\cdot},\mu)
=
\int_0^1x^{p-1}\d x=\frac1p.
\eeqn

Consider now the case of the step function
$f:[0,1]\to[0,1]$ given by $f(x)=\pred{x>0}$.
Taking $\X$ and $\mu$ as above, we have
$\lip{f}=\bLas_f(\mu,\X)=\infty$,
so here the strong mean offers no
advantage over the worst-case. However,
since $\mu(M_f(t))=1/t$, we have
that $\bLaw_f(\mu,\X)=1$.
More generally, in an ongoing work with, A. Elperin,
we have shown that $BV[0,1]$, the class of all bounded-variation
functions on $[0,1]$, satisfies $BV[0,1]\subset \barLw([0,1],\rho,\mu)$,
and the containment is strict.

\paragraph{Uniform Glivenko-Cantelli.}
Take $\X=\set{1,2,\ldots}$ with any probability measure
$\mu$
and metric $\rho(x,x')=|x-x'|$.
Consider the function class $\F=[0,1]^\X$.
It is well-known that $\F$ is
UGC with respect to $\mu$; this follows, for example,
from missing mass arguments
(see, e.g., \citep[Theorem 2]{ENK-20aistats}).
We note in passing that under a fixed distribution, UGC 
is a strictly stronger property than
learnability
\citep{Benedek1991377}. Let us specialize
our general techniques to this toy setting.

It is easily seen that $\ddim(\X)=1$, but we must address
the technical issue that $\diam(\X)=\infty$. A cursory glance at the proof of
Theorem~\ref{thm:emp-cov-num} shows that in fact only $\diam(\X_n)<\infty$ is needed.
In fact, even further savings is possible: we can relabel the elements of $\X_n$ so that
\beq
\diam(\X_n)=\nrm{\mu_n}_0\equiv |\supp(\mu_n)|=: |\set{x\in\X:\mu_n(x)>0}|.
\eeq
Renormalizing the empirical diameter to $1$ by shrinking the distances by $\nrm{\mu_n}_0$,
we have that
$\lip{\evalat{f}{\X_n}}\le\nrm{\mu_n}_0$ for all $f\in\F$.
To this we may apply Lemma~\ref{lem:lipcov}, obtaining
a $t$-covering number bound (under $\ell_\infty$) of
order 
$O(\frac{\nrm{\mu_n}_0}t\log\frac1t)$.
Then the Rademacher bound in (\ref{eq:bartlett}) yields a rate of $O((\nrm{\mu_n}_0/n)^{1/3}))$,
although Dudley's chaining integral 
\citep[Theorem 8.1.3]{MR3837109}
yields the sharper estimate $O((\nrm{\mu_n}_0/n)^{1/2}))$.
A more careful analysis \citep{CKW20} shows that 
essentially the optimal rate is
$O((\nrm{\mu_n}_{1/2}/n)^{1/2}))$.
The estimate in (\ref{eq:HL-radem}) loses out considerably, due to the additive error in the covering number bound in Theorem~\ref{thm:emp-cov-num}; however, it does suffice to conclude that $\F$ is UGC.
Note
that
our
techniques 
establish
finite
empirical $\Lp2$
covering numbers for
this class --- unlike, say,
the covering number estimate of
\citet{MR1965359}, which
requires bounded fat-shattering
dimension.

\paragraph{Comparison between \extname\ and AMLE.}
At first glance, \extname\ might appear similar
to the Absolutely Minimal Lipschitz Extension (AMLE)
\citep{MR1884349,MR2449057}.
It was already observed by \citet{MR2431047}
that the two are distinct.
Note first that AMLE requires a length space 
in order
to be well-defined,
while \extname\ of $f$ from $A\subset\X$ to $\X$
is well-defined as long as either $\diam(A)<\infty$
or $\ninf{\evalat{f}{A}}<\infty$.

A visual comparison of the behaviors of AMLE and \extname\ 
on $\X=[0,1]$ is instructive. We evaluate the step function
$f(x)=\pred{x>1/2}$ at $10$ uniformly spaced ``anchor'' points on $[0,1]$.
The AMLE is just the linear interpolation, illustrated by
the piecewise-linear (blue) curve in Figure~\ref{fig:step-func}.
The behavior of \extname\ is more interesting: at first, the sawtooth (red)
shape looks somewhat odd. Recall, however, that the \extname\ is minimizing
the {\em local} slope at each point, which is affected by the values
at all of the anchor points. Thus, each bottom spike reflects the tension
between the $0$-value at the nearby anchor points and the $1$-value
at the farther anchor points (and similarly 
for the top spikes). The two curves coincide
at the line segment representing the steep rise
between the $5$th and $6$th anchor points.

\begin{figure}[ht]
    \centering
    \includegraphics[width=0.5\textwidth]{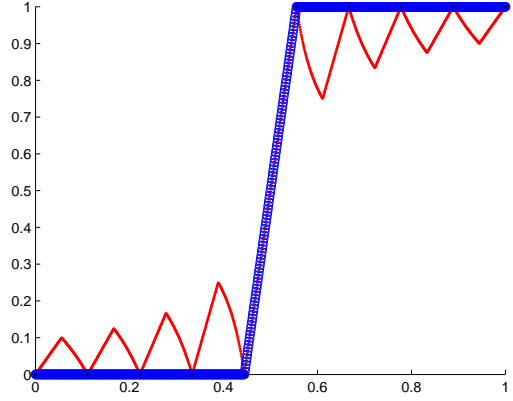}
    \caption{Comparing \extname\ to AMLE on the step function.}
    \label{fig:step-func}
\end{figure}

\paragraph{Alternative notions of average smoothness.}
One might consider a natural alternative definition of average smoothness,
less stringent than our $\bLas$
and
our $\bLaw$:
\beq
\bLalt_f(\mu,\X)
=\int_\X\int_\X \frac{|f(x)-f(y)|}{\rho(x,y)}\d\mu(x)\d\mu(y).
\eeq
We argue that $\bLalt_f(\mu,\X)$ fails as an
average measure of smoothness for bounding
empirical covering numbers and obtaining
generalization guarantees.
Indeed, consider $\X=[0,1]$ endowed with
the standard metric $\rho(x,x')=|x-x'|$
and uniform distribution $\mu$.
Let $\F$ be the collection of all $f:\X\to\set{0,1}$
with finite support. It is well-known
that $\F$ is not UGC under $\mu$, and has
typical empirical covering numbers exponential
in sample size. However, since $|f(x)-f(y)|$
vanishes
$\mu^2$-almost-everywhere on $[0,1]^2$, we have
that $\bLalt_f(\mu,\X)=0$ for all $f\in\F$.
As a consistency check, note that
no uniform bound over all $f\in\F$ is possible
for either
 $\bLas_f$
or $\bLaw_f$.

\section{Chernoff-type bound}

\begin{theorem}
\label{thm:mult-chernoff}
For
$X\sim\mathrm{Binomial}(n,p)$
and
$p=p(n)\ge 2\log(n)/n$,
$q=q(n)\le
p(n)/\log(n)
$,
\beqn
\label{eq:chernoffpq}
\P(X/n\le q)
&\le& \paren{\frac{e\log n}{n}}^2,
\qquad n\ge3.
\eeqn
\end{theorem}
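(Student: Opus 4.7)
The plan is to apply a sharp multiplicative Chernoff lower-tail bound for the binomial and then verify that the two hypotheses $np\geq 2\log n$ and $q/p\leq 1/\log n$ combine to reproduce precisely the target decay rate.

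First I would set $\mu := np\geq 2\log n$ and $a := q/p \in [0,1/\log n]$, and apply the standard Poisson-type Chernoff bound
\[
\P(X \leq qn) \;\leq\; e^{-\mu}\paren{e/a}^{a\mu} \;=\; \exp\paren{\mu\cdot h(a)},
\qquad h(a) \;:=\; -1 + a(1-\log a),
\]
with $h(0):=-1$ by continuity. This form, obtained by optimizing the Cram\'er transform at level $qn$, is equivalent to $\P(X\leq (1-\delta)\mu) \leq [e^{-\delta}/(1-\delta)^{1-\delta}]^{\mu}$ with $\delta=1-a$.

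Next I would observe that $h'(a) = -\log a > 0$ on $(0,1)$, so $h$ is increasing there; hence for every $a\in[0,1/\log n]$,
\[
h(a) \;\leq\; h\paren{1/\log n} \;=\; -1 + \frac{1+\log\log n}{\log n}.
\]
For $n\geq 3$ the right-hand side is strictly negative, via the elementary inequality $u>1+\log u$ applied to $u=\log n>1$. Multiplying by $\mu\geq 2\log n$ (which may only enlarge the right side, since $h(a)\leq 0$) gives
\[
\mu\cdot h(a) \;\leq\; 2\log n \cdot \paren{-1 + \frac{1+\log\log n}{\log n}} \;=\; -2\log n + 2 + 2\log\log n,
\]
and exponentiating produces $(e\log n/n)^2$, as claimed. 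The algebra is tight: the two hypotheses align to match the target exponent exactly.

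The main obstacle is choosing the right form of Chernoff. The familiar sub-Gaussian bound $\P(X\leq (1-\delta)\mu)\leq e^{-\mu\delta^2/2}$, when applied with $\delta=1-1/\log n\approx 1$, yields only $e^{-\mu/2}\approx 1/n$, missing the target by a factor of $n/\log^2 n$. One must use the sharper bound so that the leading exponent is the full $-\mu$; the modest $(1+\log\log n)/\log n$ correction coming from $h(1/\log n)$ then accounts exactly for the $(\log n)^2$ prefactor. Verifying this alignment is really the only nontrivial step; the rest is bookkeeping.
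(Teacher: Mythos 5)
Your proof is correct, but it takes a genuinely different route from the paper's. The paper starts from the exact relative-entropy form of the Chernoff lower tail, $\P(X/n\le q)\le\paren{(p/q)^{q}((1-p)/(1-q))^{1-q}}^{n}$, evaluates it explicitly at the extremal point $p=2\log(n)/n$, $q=2/n$ (showing the sequence $n^{2}\paren{\frac{n-2\log n}{n-2}}^{n-2}$ is at most $e^{2}$ for $n\ge 3$ via a limit and monotonicity check), and then reduces the general case to this boundary case by proving a two-variable monotonicity lemma for $f(p,k)=k^{p/k}\paren{\frac{1-p}{1-p/k}}^{1-p/k}$, which requires computing and sign-checking two partial derivatives of $\log f$. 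You instead relax first to the Poissonized bound $\exp(\mu h(a))$ with $\mu=np$ and $a=q/p$; this decouples the two hypotheses so that the extremal analysis becomes one-dimensional ($h'(a)=-\log a>0$ on $(0,1)$, plus $h(1/\log n)<0$ for $n\ge3$), and the constant $(e\log n/n)^{2}$ drops out by direct exponentiation with no asymptotic or case analysis. Your version is shorter and more transparent about why the hypotheses $np\ge2\log n$ and $q\le p/\log n$ produce exactly this rate; the paper's version works with the tighter KL bound throughout (which costs nothing here, since the final constants agree). One small point worth making explicit in your write-up: the degenerate case $a=0$ is covered because $\P(X=0)=(1-p)^{n}\le e^{-\mu}=e^{\mu h(0)}$, consistent with your convention $h(0)=-1$.
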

\begin{proof}

For all $0<q<p<1$ and 
$X\sim\mathrm{Binomial}(n,p)$,
\citep[page 4]{dubhashi98}
\beq
\P(X/n\le q)
&\le&
\paren{
\paren{\frac{p}{q}}^{q}
\paren{\frac{1-p}{1-q}}^{1-q}
}^n
.
\eeq

We first consider the case where 
$p(n)= 2\log(n)/n$
and $q(n)=p(n)/\log(n)=2/n$.
In this case,
\beq
\P(X/n\le q)\cdot\paren{\frac{n}{\log n}}^2
&\le&
\paren{
\paren{\frac{p}{q}}^{q}
\paren{\frac{1-p}{1-q}}^{1-q}
}^n
\cdot\paren{\frac{n}{\log n}}^2\\
&=&
(\log n)^2\cdot\paren{\frac{n-2\log n}{n-2}}^{n-2}
\cdot\paren{\frac{n}{\log n}}^2\\
&=&
n^2\paren{\frac{n-2\log n}{n-2}}^{n-2}=:a(n)\\
&\ntoinf& e^2.
\eeq
Furthermore, the sequence
$a(n)
$
is monotonically increasing for $n\ge 12$,
and it is easily verified that $a(n)\le e^2$ for all $n\ge 3$.
This proves (\ref{eq:chernoffpq}) for 
$p(n)= 2\log(n)/n$, $q(n)=2/n$.

To prove the full claim, consider the function
\beq
f(p,k) = k^{p/k}\paren{\frac{1-p}{1-p/k}}^{1-p/k},
\qquad
p\in[0,1], k\in[1,\infty).
\eeq
We claim that (i) $f$ is monotonically decreasing
in each argument and (ii) this suffices to 
establish (\ref{eq:chernoffpq}) in its full generality.
To see how monotonicity implies the full claim,
note that
$\P(X/n\le p/k)\le f(p,k)^n$,
the latter being maximized by the smallest
feasible values of $p$ and $k$.
The conditions of the Theorem
constrain these at $p\ge 2\log(n)/n$
and $k\ge\log n \ge\log 3>1$, which reduces the
problem to the case analyzed above.

To prove monotonicity in $p$, compute
\beq
\ddel{}{p}\log f(p,k)
=\frac{
1 - k + (1 - p)\log k  - (1 - p)\log[(1 - p)/(1 - p/k)]
}{
k(1-p)
}.
\eeq
Since the denominator is clearly positive, it suffices to prove
that the numerator is negative.
Now $1 - k + (1 - p)\log k\le 1 - k + \log k<0$
for $k>1$, and $(1 - p)/(1 - p/k)>1$,
so the contribution of the remaining term is negative as well.

To prove monotonicity in $k$, compute
\beq
\ddel{}{k}\log f(p,k)
=
\frac{
p [
-\log k + \log( (1 - p)/(1 - p/k))
]
}{
k^2
},
\eeq
whose negativity is equivalent to
$k> (1 - p)/(1 - p/k)$, 
the latter a consequence of $k>1$.

\end{proof}

\end{document}